\newtheorem{theorem}{Theorem}[section]
\newtheorem{lemma}[theorem]{Lemma}
\newtheorem{corollary}[theorem]{Corollary}
\newtheorem{proposition}[theorem]{Proposition}
\newtheorem{example}[theorem]{Example}
\newcommand{\tto}{\twoheadrightarrow}
\font\sc=rsfs10
\newcommand{\cC}{\sc\mbox{C}\hspace{1.0pt}}
\begin{document}
\title[Extreme representations of semirings]
{Extreme representations of semirings}

\author[Chen-Dubsky-Jonsson-Mazorchuk-Persson Westin-Zhang-Zimmermann]{Chih-Whi Chen, 
Brendan Frisk Dubsky, Helena Jonsson,\\ Volodymyr Mazorchuk, Elin Persson Westin,\\ 
Xiaoting Zhang, Jakob Zimmermann}

\begin{abstract}
This is a write-up of the discussions during the meetings of the study group on 
representation theory of semirings which was organized at the Department of Mathematics, 
Uppsala University, during  the academic year 2017-2018. The main emphasis is
on classification of various classes of ``irreducible'' representations for
various concrete semirings. 
\end{abstract}

\maketitle

\section{Introduction}\label{s1}

Abstract structure and representation theory of semirings managed to successfully stay away from 
mathematical mainstream over the years. This is despite of the fact that there are monographs,
like e.g. \cite{Go,GM,Wi}, devoted to it. One of the possible reasons for this might be that it is
significantly more complicated than the classical structure and representation theory
of rings.

The present paper is a write-up of the discussions during the meetings of the study group on 
representation theory of semirings which was organized at the Department of Mathematics, 
Uppsala University, during  the academic year 2017-2018. Our interest in the subject stems
from its connection to higher representation theory and categorification, see \cite{Ma1,Ma2}.
A typical object of study in higher representation theory is a Krull-Schmidt tensor category 
$\cC$ with finitely many isomorphism classes of indecomposable objects. The
{\em Grothendieck decategorification} of $\cC$ is the split Grothendieck group 
$[\cC]_{\oplus}$ of $\cC$
which carries a natural structure of a $\mathbb{Z}$-algebra. The following question 
sounds natural in this context:

{\em What will one gain or loose by looking instead of $[\cC]_{\oplus}$
at the natural semiring structure on the set of isomorphism classes of 
objects in $\cC$?}

However, before we can even start thinking about this question, one needs to learn
a little bit about semirings and their representations. That was the aim of our study group.
It seems that even the basic terminology of the theory is not established yet
(for example, semimodules in \cite{Go} are called modules in \cite{Wi}). Therefore
we tried, when we thought appropriate, to come up with a ``better'' alternative.
One such case is the notion of a {\em simple} semimodule. In ring theory, simple modules
are ``smallest possible'' both with respect to taking submodules and quotients
(as the latter two requirements are equivalent). For semirings, the notion of
``smallest possible'' semimodule with respect to taking submodules is not equivalent,
in general, to the notion of a ``smallest possible'' semimodule with respect to taking 
quotients. In \cite{Go}, the former are called {\em minimal} while the latter 
are called {\em simple}. The second term
might be motivated by the notion of a simple ring, but is really
confusing from the point of view of module theory. Therefore we propose to 
call ``smallest possible'' semimodules with respect to taking 
quotients {\em elementary} and keep the word {\em simple} to describe those semimodules
which are both minimal and elementary at the same time.   Semimodules which are 
either minimal or elementary (or both) are called {\em extreme}. 

The main emphasis of the text is on the classification of (some classes of) extreme
semimodules for various concrete semirings (including, in particular,
the Boolean semiring and the semiring of all non-negative integers). Our aim was to 
look at examples to see whether a solution to the classification problem
in these examples seems possible and  how all these different ``simplicity''
notions can be different in examples. One of the motivating examples
was the $\mathbb{Z}_{\geq 0}$-semiring generated by the elements of a Kazhdan-Lusztig
basis in the integral group ring of a finite Coxeter system, see \cite{KL,EW}.
For such a semiring, we give a complete solution for classification of all extreme 
semimodules in types $A_1$, $A_2$ and of all proper extreme
semimodules in all dihedral types, see Sections~\ref{s5}, \ref{s7}
and \ref{s9}, respectively.
Taking into account the answer in type $A_2$,  one might expect that,
in the general case, a complete classification might be rather hard.
We get, however, some general results on minimal semimodules in Sections~\ref{s8}.

Along the way, we solve the classification problem for the Boolean semiring,
for the semiring of all non-negative integers and the group semiring of a
finite group over the latter. It might well be that some of these results 
are already known and can be found in the literature. We have not seen them
and the fact that most of these results are easier and faster to prove
directly than to look in the literature is strongly discouraging from
spending too much time on looking. We apologize if in this way we missed
some references and are happy to add them in the revised version if we get 
any hints about them.

Apart from that, the text mainly follows the time line of the discussions
during our meetings. One could suggest that the paper could be organized
more efficiently by combining some of the results and that some directions 
described in this manuscript could be developed further, but, unfortunately,
this is not possible due to the time constraints on the present format of 
this study group. Most significant (seemingly) original results 
are in Sections~\ref{s7}, \ref{s8} and \ref{s9}. 

We start in Section~\ref{s2} with a description of the setup and
basic terminology. Section~\ref{s25} studies extreme semimodules over
the Boolean semiring and its various generalizations.
Section~\ref{s3} considers extreme semimodules over the semiring of 
non-negative integers. Section~\ref{s5} studies extreme semimodules
in the case of the group semiring over non-negative integers
of the $2$-element group and also its subsemiring corresponding 
to the Kazhdan-Lusztig basis.
In Section~\ref{s6} we consider the semiring of non-negative real numbers.
Unlike the previous cases, here we really see for the first time
how  the notions of minimal and elementary semimodules can be different.
Section~\ref{s4} contains some general results like an analogue of
Schur's lemma and some detailed general information on the structure of
minimal and elementary proper semimodules. Section~\ref{s7}
classifies all types of extreme semimodules for the 
group semiring of the symmetric group $S_3$ over
$\mathbb{Z}_{\geq 0}$ in the Kazhdan-Lusztig basis.
Section~\ref{s8} studies finitely generated 
$\mathbb{Z}_{\geq 0}$-semirings, defines cell and reduced cell semimodules for them
and shows that under some assumptions these are exactly the 
maximal, with respect to projections, objects among minimal proper semimodules.
Finally, Section~\ref{s9} provides classification of extreme proper
semimodules for the Kazhdan-Lusztig semiring of a dihedral group.
\vspace{0.5cm}

{\bf Acknowledgment.}
This research was partially supported by the Swedish Research Council,
the G{\"o}ran Gustafsson Foundation and Vergstiftelsen.
\vspace{2cm}

\section{Basics}\label{s2}

\subsection{Semirings}\label{s2.1}

A {\em semiring} is a tuple $(R,+,\cdot,0,1)$ where
\begin{itemize}
\item $R$ is a set;
\item $+$ and $\cdot$ are binary operations on $R$;
\item $0$ and $1$ are elements in $R$;
\end{itemize}
which satisfies the following axioms:
\begin{itemize}
\item $(R,+,0)$ is a commutative monoid;
\item $(R,\cdot,1)$ is a monoid;
\item $(a+b)c=ac+bc$ and $c(a+b)=ca+cb$, for all $a,b,c\in R$;
\item $0a=a0=0$, for all $a\in R$.
\end{itemize}
Our basic example of a semiring is the semiring $(\mathbb{Z}_{\geq 0},+,\cdot,0,1)$ of 
non-negative integers with respect to the usual addition and multiplication.
For simplicity, in what follows we will refer to this semiring as $\mathbb{Z}_{\geq 0}$.

Let $\mathbf{Mon}$ denote the category of all monoids and monoid homomorphisms.
Another example of a semiring is the semiring $(\mathrm{End}_{\mathbf{Mon}}(M),+,\circ,\mathbf{0}_M,\mathrm{Id}_M)$, where
\begin{itemize}
\item $(M,+_M,0_M)$ is a {\em commutative} monoid;
\item $\mathrm{End}_{\mathbf{Mon}}(M)$ is the set of all endomorphisms of $M$ in $\mathbf{Mon}$;
\item $+$ is the usual addition of endomorphisms defined via
\begin{displaymath}
\big(\varphi+\psi\big)(m):=\varphi(m)+_M\psi(m),\quad \text{ for all }m\in M;
\end{displaymath}
\item $\circ$ is composition of endomorphisms;
\item $\mathbf{0}_M$ is the zero endomorphism of $M$, it is given by $\mathbf{0}_M(m)=0_M$, for all $m\in M$;
\item $\mathrm{id}_M$ is the identity endomorphism of $M$.
\end{itemize}
Again, for simplicity, we will refer to this semiring as $\mathrm{End}_{\mathbf{Mon}}(M)$.

Given two semirings $(R,+_R,\cdot_R,0_R,1_R)$ and $(T,+_T,\cdot_T,0_T,1_T)$, a {\em homomorphism}
of semirings is a map $\varphi:R\to T$ such that 
\begin{itemize}
\item $\varphi(a+_R b)=\varphi(a)+_T\varphi(b)$, for all $a,b\in R$;
\item $\varphi(a\cdot_R b)=\varphi(a)\cdot_T\varphi(b)$, for all $a,b\in R$;
\item $\varphi(0_R)=0_T$;
\item $\varphi(1_R)=1_T$.
\end{itemize}
For example, the identity map $\mathrm{id}_R$ is a homomorphism of semirings, for any semiring $R$.
Composition of homomorphisms of semirings is a homomorphism of semirings. Therefore
all semirings, together with all homomorphisms between semirings, form a category, denoted
$\mathbf{SRing}$.

For a semiring $R=(R,+,\cdot,0,1)$, the {\em opposite} semiring $R^{\mathrm{op}}$ is defined as the
semiring $(R,+,\cdot^{\mathrm{op}},0,1)$ where $a \cdot^{\mathrm{op}} b:=ba$, for all $a,b\in R$.

\subsection{Representations and semimodules}\label{s2.2}

Given a semiring $R=(R,+,\cdot,0,1)$, a {\em representation of $R$} is a semiring homomorphism
$\varphi:R\to \mathrm{End}_{\mathbf{Mon}}(M)$, for some commutative monoid $M=(M,+_M,0_M)$. The
monoid $M$ is called the {\em underlying monoid} of the representation $\varphi$. 

Given $R$ and $M$ as above, a {\em left $R$-semimodule} structure on $M$ is a map
\begin{displaymath}
R\times M\to M,\quad (r,m)\mapsto r(m), 
\end{displaymath}
satisfying the following axioms:
\begin{itemize}
\item $r(m+_M n)=r(m)+_Mr(n)$, for all $r\in R$ and $m,n\in M$;
\item $r(0_M)=0_M$, for all $r\in R$;
\item $(r+s)(m)=r(m)+_Ms(m)$, for all $r,s\in R$ and $m\in M$;
\item $(rs)(m)=r(s(m))$, for all $r,s\in R$ and $m\in M$;
\item $0(m)=0_M$, for all $m\in M$;
\item $1(m)=m$, for all $m\in M$.
\end{itemize}
There are several variations of the axioms for this structure in the literature, see \cite{Go,GM}.
An $R$-semimodule $M$ is called an {\em $R$-module} provided that $M$ is an abelian group.
An $R$-semimodule which is not a module will be called a {\em proper} semimodule.

Clearly, given a representation $\varphi:R\to \mathrm{End}_{\mathbf{Mon}}(M)$, the map
$(r,m)\mapsto \varphi(r)(m)$ defines a left $R$-semimodule structure on $M$. Conversely, any
left $R$-semimodule structure $(r,m)\mapsto r(m)$ gives rise to a representation of $R$
via $\varphi(r):=r({}_-)$. Therefore we will use the words {\em representation} and {\em (left) semimodule}
as synonyms, as usual. If not explicitly stated otherwise, by {\em semimodule} we always mean a {\em left semimodule}.

Here are some basic examples of $R$-semimodules.
\begin{itemize}
\item If $M=\{0_M\}$, then the unique map $R\to \mathrm{End}_{\mathbf{Mon}}(M)$ is 
a representation of $R$ called the {\em zero semimodule}.
\item The commutative monoid $(R,+,0)$ has the natural structure of a left $R$-semimodule
via $(r,m)\mapsto rm$, for $r,m\in R$. This is the {\em left regular} $R$-semimodule, usually denoted
${}_RR$.
\end{itemize}

Given two $R$-semimodules $M=(M,+_M,0_M)$ and $N=(N,+_N,0_N)$, an {\em $R$-semimodule homomorphism} from
$M$ to $N$ is a map $\alpha:M\to N$ such that 
\begin{itemize}
\item $\alpha$ is a homomorphism of monoids;
\item $\alpha(r(m))=r(\alpha(m))$, for all $r\in R$ and $m\in M$, that is, the following diagram
commutes:
\begin{displaymath}
\xymatrix{ 
M\ar[rr]^{r({}_-)}\ar[d]_{\alpha}&&M\ar[d]^{\alpha}\\
N\ar[rr]^{r({}_-)}&&N
}
\end{displaymath}
\end{itemize}
Here are some examples of $R$-semimodule homomorphisms:
\begin{itemize}
\item For any $R$-semimodule $M$, the identity map $\mathrm{id}_M:M\to M$ is an $R$-semimodule homomorphism.
\item For any $R$-semimodules $M$ and $N$, the zero map $\mathbf{0}_{M,N}:M\to N$, given by
$\mathbf{0}_{M,N}(m)=0_N$, for all $m\in M$, is an $R$-semimodule homomorphism.
\end{itemize}
Composition of  $R$-semimodule homomorphisms is an $R$-semimodule homomorphism. Therefore all
$R$-semimodules and  $R$-semimodule homomorphisms form a category, denoted $R\text{-}\mathrm{sMod}$.
The set of all $R$-semimodule homomorphisms from $M$ to $N$ is denoted 
\begin{displaymath}
\mathrm{Hom}_R(M,N)=R\text{-}\mathrm{sMod}(M,N).
\end{displaymath}

The notion of a {\em right} $R$-semimodule is defined mutatis mutandis to the above. The
{\em right regular} $R$-semimodule is denoted $R_R$. The 
category of right $R$-semimodules is denoted $\mathrm{sMod}\text{-}R$. As usual, we have a canonical
isomorphism of categories 
\begin{displaymath}
\mathrm{sMod}\text{-}R\cong R^{\mathrm{op}}\text{-}\mathrm{sMod}. 
\end{displaymath}

\subsection{Bisemimodules}\label{s2.3}

An {\em $R$-$R$-bisemimodule} is a commutative monoid $M$ equipped both with the
structure of a left $R$-semimodule $(a,m)\mapsto am$ and with the
structure of a right $R$-semimodule $(m,b)\mapsto mb$ such that these two
structures commute in the sense that $a(mb)=(am)b$, for all $a,b\in R$ and $m\in M$.
The semiring $R$ itself has the natural structure of an $R$-$R$-bisemimodule given by
multiplication. This bisemimodule is called the {\em regular} $R$-$R$-bisemimodule and denoted ${}_RR_R$. 
The  category of $R$-$R$-bisemimodules is denoted $R\text{-}\mathrm{sMod}\text{-}R$.

\subsection{Subsemimodules}\label{s2.4}

Let $R$ be a semiring and $M\in R\text{-}\mathrm{sMod}$. A {\em subsemimodule} of $M$ is a submonoid
$N$ of $M$ which is closed under the action of $R$. For example, both ${0_M}$ and $M$ are always
subsemimodules of $M$. A subsemimodule of $M$ different from ${0_M}$ and $M$ is called a {\em proper} subsemimodule. Similar terminology is also used for right semimodules.

Subsemimodules of regular semimodules have special names:
\begin{itemize}
\item Subsemimodules of ${}_RR$ are called {\em left ideals} of $R$.
\item Subsemimodules of $R_R$ are called {\em right ideals} of $R$.
\item Subbisemimodules of ${}_RR_R$ are called {\em ideals} of $R$.
\end{itemize}

\subsection{Congruences and quotients}\label{s2.5}

Let $R$ be a semiring and $M\in R\text{-}\mathrm{sMod}$. An equivalence relation $\sim$ on $M$
is called an {\em $R$-congruence} provided that
\begin{itemize}
\item $m\sim n$ implies $(m+k)\sim(n+k)$, for all $m,n,k\in M$;
\item $m\sim n$ implies $r(m)\sim r(n)$, for all $m,n\in R$ and $r\in R$.
\end{itemize}
The first of these two conditions means that $\sim$ is a congruence on the monoid $M$.

Given an $R$-congruence $\sim$ on $M$, the set $M/_\sim$ of equivalence classes with respect to 
$\sim$ has the natural structure of an $R$-semimodule given by
\begin{itemize}
\item $(m)_{\sim}+(n)_{\sim}:=(m+n)_{\sim}$, for all $m,n\in M$;
\item the zero element of $M/_\sim$ is $(0_M)_{\sim}$;
\item $r\big((m)_{\sim}\big):=(r(m))_{\sim}$, for all $r\in R$ and $m\in M$.
\end{itemize}
Here we denote by $(m)_{\sim}$ the $\sim$-equivalence class containing an element $m\in M$.
The semimodule $M/_\sim$ is called the {\em quotient} of $M$ with respect to $\sim$.

Let $N\in R\text{-}\mathrm{sMod}$ and $\alpha\in\mathrm{Hom}_R(M,N)$. Define 
an equivalence relation $\sim_{\alpha}$ on $M$ via $m\sim_{\alpha}k$ if and only if 
$\alpha(m)=\alpha(k)$, for $m,k\in M$. The equivalence relation $\sim_{\alpha}$ is called
the {\em kernel} of $\alpha$. 

\begin{proposition}\label{prop2.1}
The equivalence relation $\sim_{\alpha}$ is an $R$-congruence and $M/_{\sim_{\alpha}}$
is isomorphic to the image $\mathrm{Im}(\alpha)$ of $\alpha$, the latter being a subsemimodule of $N$.
\end{proposition}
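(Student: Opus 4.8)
The plan is to verify the three assertions of the proposition by direct appeal to the defining properties of an $R$-semimodule homomorphism, treating them in a slightly different order from the statement, since the isomorphism claim presupposes that $\mathrm{Im}(\alpha)$ is a subsemimodule. First I would establish that $\sim_{\alpha}$ is an $R$-congruence. That $\sim_{\alpha}$ is an equivalence relation is inherited for free from equality on $N$, so only the two compatibility conditions require attention. For the additive condition, suppose $m\sim_{\alpha}n$, i.e.\ $\alpha(m)=\alpha(n)$; since $\alpha$ is a homomorphism of monoids, additivity yields $\alpha(m+k)=\alpha(m)+_N\alpha(k)=\alpha(n)+_N\alpha(k)=\alpha(n+k)$, whence $(m+k)\sim_{\alpha}(n+k)$. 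For the action condition I would argue identically, replacing additivity by the intertwining property $\alpha(r(m))=r(\alpha(m))$: from $\alpha(m)=\alpha(n)$ one gets $\alpha(r(m))=r(\alpha(m))=r(\alpha(n))=\alpha(r(n))$, so $r(m)\sim_{\alpha}r(n)$.

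Next I would check that $\mathrm{Im}(\alpha)$ is a subsemimodule of $N$. It is a submonoid because $0_N=\alpha(0_M)$ lies in it and it is closed under $+_N$, since $\alpha(m_1)+_N\alpha(m_2)=\alpha(m_1+m_2)$; closure under the $R$-action again follows from the intertwining property, as $r(\alpha(m))=\alpha(r(m))\in\mathrm{Im}(\alpha)$.

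For the isomorphism I would define $\overline{\alpha}\colon M/_{\sim_{\alpha}}\to\mathrm{Im}(\alpha)$ by $\overline{\alpha}\big((m)_{\sim_{\alpha}}\big):=\alpha(m)$. The whole point of recording the kernel as the congruence $\sim_{\alpha}$ is that it is by construction exactly the relation ``$\alpha$ takes the same value'', so well-definedness and injectivity of $\overline{\alpha}$ become two readings of the single equivalence $(m)_{\sim_{\alpha}}=(k)_{\sim_{\alpha}}\iff\alpha(m)=\alpha(k)$; surjectivity onto $\mathrm{Im}(\alpha)$ is immediate. That $\overline{\alpha}$ is an $R$-semimodule homomorphism is then read off from the definitions of the operations on the quotient in Section~\ref{s2.5}: it sends $(0_M)_{\sim_{\alpha}}$ to $0_N$, respects $+$ because $\overline{\alpha}\big((m)_{\sim_{\alpha}}+(k)_{\sim_{\alpha}}\big)=\alpha(m+k)=\alpha(m)+_N\alpha(k)$, and commutes with the action because $\overline{\alpha}\big(r((m)_{\sim_{\alpha}})\big)=\alpha(r(m))=r(\alpha(m))=r\big(\overline{\alpha}((m)_{\sim_{\alpha}})\big)$.

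I expect no serious obstacle: this is precisely the semimodule analogue of the first isomorphism theorem, and each step is forced by the axioms. The only conceptual point worth flagging is \emph{why} one must quotient by a congruence rather than by a ``kernel subsemimodule'' as in ring theory. In the absence of additive inverses the fibres of $\alpha$ need not be translates of a single subsemimodule, so the congruence $\sim_{\alpha}$, not any subobject of $M$, is the correct device for recording the identifications made by $\alpha$; keeping this in mind is what makes the above verifications routine rather than merely a transcription of the ring-theoretic argument.
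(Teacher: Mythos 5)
Your proposal is correct and follows the same route as the paper: the paper's proof consists solely of the observation that $(m)_{\sim_{\alpha}}\mapsto \alpha(m)$ defines the desired isomorphism, leaving the routine verifications implicit, and your argument simply spells out those verifications (congruence properties via additivity and the intertwining condition, $\mathrm{Im}(\alpha)$ being a subsemimodule, and well-definedness, bijectivity and $R$-linearity of the induced map). Your closing remark on why congruences rather than kernel subsemimodules are the right device is a sensible observation, consistent with the paper's discussion in Subsection~\ref{s2.5}.
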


\begin{proof}
The map $(m)_{\sim_{\alpha}}\mapsto \alpha(m)$ defines an isomorphism 
between $M/_{\sim_{\alpha}}$ and $\mathrm{Im}(\alpha)$.
\end{proof}

Assume that the monoid $M$ is, in fact, a group and $\sim$ an ($R$-)congruence on $M$.
Then there exists a subgroup $N$ of $M$ such that equivalence classes
of $\sim$ are exactly the cosets in $M/N$, see e.g. \cite[Subsection~6.2]{GaMa}.

\subsection{Minimal, elementary and simple semimodules}\label{s2.6}

A non-zero $R$-semimodule $M$ is called
\begin{itemize}
\item {\em minimal} provided that the only subsemimodules of $M$ are $\{0_M\}$ and $M$;
\item {\em elementary} provided that the only $R$-congruences on $M$ are 
the {\em equality relation} $=_M$ and the {\em full relation} $M\times M$;
\item {\em simple} provided that it is both minimal and elementary. 
\end{itemize}
Obviously, any $R$-semimodule $M$ such that $|M|=2$ is simple.

As usual, we will use {\em simple semimodules} and {\em irreducible representations} as synonyms.
Note the difference in terminology with, in particular, \cite{IRS}.

The following property of minimal semimodules is noted in \cite[Proposition~2.7]{IRS}.

\begin{proposition}\label{prop2.2}
Any non-zero quotient of a minimal $R$-semimodule is minimal.
\end{proposition}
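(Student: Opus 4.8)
The plan is to work with the canonical projection and to pull subsemimodules of the quotient back to $M$. Let $M$ be a minimal $R$-semimodule, let $\sim$ be an $R$-congruence on $M$, and write $Q:=M/_\sim$ for the quotient, which we assume to be non-zero. Denote by $\pi\colon M\to Q$, $m\mapsto (m)_\sim$, the canonical map; by the description of the quotient semimodule structure in Subsection~\ref{s2.5}, the map $\pi$ is a surjective homomorphism of $R$-semimodules. The whole argument rests on transporting subsemimodules back and forth along $\pi$.

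First I would take an arbitrary subsemimodule $N$ of $Q$ and form its preimage $\pi^{-1}(N)=\{m\in M:\pi(m)\in N\}$. The one routine step is to verify that $\pi^{-1}(N)$ is a subsemimodule of $M$: it contains $0_M$ since $\pi(0_M)=0_Q\in N$; it is closed under $+$ because $\pi(m+n)=\pi(m)+\pi(n)\in N$ whenever $\pi(m),\pi(n)\in N$; and it is closed under the $R$-action because $\pi(r(m))=r(\pi(m))\in N$ for $m\in\pi^{-1}(N)$ and $r\in R$. Each of these uses only that $\pi$ is an $R$-semimodule homomorphism and that $N$ is a subsemimodule of $Q$.

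Next I would invoke the minimality of $M$: since $\pi^{-1}(N)$ is a subsemimodule, it must equal either $\{0_M\}$ or $M$. In the second case surjectivity of $\pi$ gives $N=\pi(M)=Q$. In the first case surjectivity forces $N=\{0_Q\}$, for any $q\in N$ is of the form $q=\pi(m)$, whence $m\in\pi^{-1}(N)=\{0_M\}$ and therefore $q=0_Q$. Thus $\{0_Q\}$ and $Q$ are the only subsemimodules of $Q$; since $Q$ is non-zero these two are distinct, so $Q$ is minimal.

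I do not expect a genuine obstacle here; the only point requiring care is that $\pi$ need not be injective, so the preimage $\pi^{-1}(0_Q)$ is the entire $\sim$-class of $0_M$ and may be strictly larger than $\{0_M\}$. This is exactly why, in the case $\pi^{-1}(N)=\{0_M\}$, one must appeal to surjectivity of $\pi$ (and not to any form of injectivity) in order to conclude $N=\{0_Q\}$; it is also the place where the hypothesis that $Q$ is non-zero is used, guaranteeing $\{0_Q\}\neq Q$ and hence that $Q$ genuinely qualifies as minimal.
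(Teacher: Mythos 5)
Your proof is correct and is exactly the paper's argument: the paper's one-line proof rests on the same observation that the full preimage of a subsemimodule under the canonical projection is a subsemimodule, combined with minimality of $M$ and surjectivity of $\pi$. You merely spell out the routine verifications (closure of $\pi^{-1}(N)$ and the two-case analysis) that the paper leaves implicit.
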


\begin{proof}
This follows directly from the observation that the full preimage of a subsemimodule
is a subsemimodule.
\end{proof}

\begin{example}\label{ex2.25}
{\rm
Let $R=(\mathbb{R}_{\geq 0},+,\cdot,0,1)$. Then the left regular $R$-semimodule ${}_RR$
is minimal as $\mathbb{R}_{\geq 0}a=\mathbb{R}_{\geq 0}$, for any 
$a\in\mathbb{R}_{>0}$. At the same time, the equivalence relation
$\sim$ on ${}_RR$ with equivalence classes $\{0\}$ and
$\mathbb{R}_{>0}$ is a non-trivial $R$-congruence. Hence this semimodule is not elementary.
} 
\end{example}

\begin{example}\label{ex2.26}
{\rm
Let $R=(\mathbb{R}_{\geq 0},+,\cdot,0,1)$ and $M=(\mathbb{R},+,0)$. Then 
$R$ acts on $M$ via multiplication. The set of all non-negative elements of $M$
is an $R$-invariant submonoid. Hence the $R$-semimodule $M$ is not minimal.
We claim that $M$ is elementary. As $M$ is a group, any congruence on $M$
has the form of cosets with respect to some subgroup, see \cite[Subsection~6.2]{GaMa}. 
We claim that $M$ has no proper $R$-invariant subgroups. Indeed, if an $R$-invariant
subgroup of $M$ contains a non-zero element $m$, then $Rm$ is the set of all
elements of $M$ having the same sign as $m$. The subgroup of $M$ generated by such $Rm$
equals $M$. This example overlaps with \cite[Example~3.7(c)]{KNZ}.
}
\end{example}

In this subsection (and in the rest of this paper) 
we slightly deflect from the terminology used in \cite{Go,GM}.
We find our terminology better adjusted to the fact that simple modules over rings are
both minimal and elementary in the above sense.

We will also say that a semimodule $M$ is {\em extreme} provided that it is 
minimal {\em or} elementary (or both).

\subsection{Direct sums of semimodules}\label{s2.7}

Given $M,N\in R\text{-}\mathrm{sMod}$, we have their {\em direct sum} $M\oplus N\in R\text{-}\mathrm{sMod}$
defined in the usual way as the set of all pairs $(m,n)$, where $m\in M$ and $n\in N$, with component-wise
operations. We have the usual inclusion $R$-homomorphisms $\iota_M:M\to M\oplus N$, given by
$m\mapsto (m,0_N)$, for all $m\in M$, and $\iota_N:N\to M\oplus N$, given by $n\mapsto (0_M,n)$, for $n\in N$. 
We also have the usual projection $R$-homomorphisms $\pi_M:M\oplus N\to M$, given by
$(m,n)\mapsto m$, for all $(m,n)\in M\oplus N$, and $\pi_N:M\oplus N\to N$, given by $(m,n)\mapsto n$, for all 
$(m,n)\in M\oplus N$. 

In general, this is not sufficient for $R\text{-}\mathrm{sMod}$ to be an additive category since $R\text{-}\mathrm{sMod}$
is, usually, not even preadditve in the sense that morphism spaces in $R\text{-}\mathrm{sMod}$ are usually
not abelian groups (they are only abelian monoids).

As usual, we write $M^{\oplus k}$ for $M\oplus M\oplus\dots\oplus M$ ($k$ summands).

\subsection{Free semimodules}\label{s2.8}

Let $R$ be a semi-ring, $M\in R\text{-}\mathrm{sMod}$ and $B$ a non-empty subset of $M$.
As usual, we will say that $M$ is {\em free with basis} $B$ provided that, for any 
$N\in R\text{-}\mathrm{sMod}$ and any map $f:B\to N$, there is a unique 
$\alpha\in\mathrm{Hom}_R(M,N)$ such that $\alpha\vert_{B}=f$. In other words, the following
diagram commutes.
\begin{displaymath}
\xymatrix{
B\ar@{^{(}->}[rr]\ar[rrd]_{f}&& M\ar@{.>}[d]^{\exists ! \alpha}\\
&&N
}
\end{displaymath}

For any positive integer $k$, the $R$-semimodule $R^{\oplus k}$ is free with a basis given
by standard basis vectors $e_i$, where $i=1,2,\dots,k$. For a fixed cardinality of a basis,
free semimodules (if they exist) are unique up to isomorphism, as follows directly from the universal
property above.

An $R$-semimodule $M$ is called {\em finitely generated} provided that it is isomorphic to a quotient of 
the semimodule $R^{\oplus k}$, for some positive integer $k$. The category of all finitely generated
$R$-semimodules is denoted $R\text{-}\mathrm{smod}$.

\section{Various Boolean semimodules}\label{s25}

\subsection{Boolean semiring}\label{s25.1}

We denote by $\mathbf{B}$ the {\em Boolean semiring} 
\begin{displaymath}
\mathbf{B}=(\{0,1\},+,\cdot,0,1), 
\end{displaymath}
where $+$ and $\cdot $ are given by their Cayley tables
\begin{displaymath}
\begin{array}{c||c|c}
+& 0&1\\
\hline\hline
0&0&1\\
\hline
1&1&1
\end{array}\qquad\qquad
\begin{array}{c||c|c}
\cdot& 0&1\\
\hline\hline
0&0&0\\
\hline
1&0&1
\end{array},
\end{displaymath}
respectively. 

\subsection{$\mathbf{B}$-semimodules}\label{s25.2}

Recall that a semigroup consisting of idempotents is called a {\em band}.
Abelian bands are exactly the {\em semi-lattices}.

\begin{lemma}\label{lem25.1}
The following statements hold.
\begin{enumerate}[$($i$)$]
\item\label{lem25.1.1} Every $B$-semimodule is a semilattice. 
\item\label{lem25.1.2} Every monoid semilattice has the unique structure of a $B$-semimodule, namely the one where
$0$ acts as zero and $1$ acts as the identity.
\end{enumerate}
\end{lemma}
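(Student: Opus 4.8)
The plan is to prove the two statements separately, each by unwinding the semimodule axioms against the very simple Cayley tables of $\mathbf{B}$.

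For part~\eqref{lem25.1.1}, let $M$ be a $\mathbf{B}$-semimodule. The underlying structure is already a commutative monoid, so it suffices to show that every element is idempotent, i.e.\ that $m+_M m=m$ for all $m\in M$. The key observation is that $1+1=1$ in $\mathbf{B}$, by the addition table. First I would apply the distributivity axiom $(r+s)(m)=r(m)+_M s(m)$ with $r=s=1$, obtaining $(1+1)(m)=1(m)+_M 1(m)$. Since $1+1=1$ and $1$ acts as the identity, the left-hand side is $1(m)=m$ while the right-hand side is $m+_M m$. Thus $m=m+_M m$, so $M$ is a commutative band, that is, a semilattice.

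For part~\eqref{lem25.1.2}, let $M$ be a monoid semilattice, so $(M,+_M,0_M)$ is a commutative monoid in which every element is idempotent. I would define the action by setting $0(m):=0_M$ and $1(m):=m$ for all $m\in M$, and then verify that this is indeed a $\mathbf{B}$-semimodule structure by checking each of the six axioms from Subsection~\ref{s2.2}. Most of these are immediate: additivity of each of $0$ and $1$, the fact that $0$ and $1$ preserve $0_M$, the unit axiom $1(m)=m$, the zero axiom $0(m)=0_M$, and the multiplicativity axiom $(rs)(m)=r(s(m))$ (which uses only the multiplication table of $\mathbf{B}$ together with the definitions of the two operators). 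After that I would argue uniqueness: in any $\mathbf{B}$-semimodule structure the axioms force $0(m)=0_M$ and $1(m)=m$, so no freedom remains.

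The only axiom requiring genuine input is the mixed distributivity $(r+s)(m)=r(m)+_M s(m)$, and the essential case is $r=s=1$, where it reads $1(m)=1(m)+_M 1(m)$, i.e.\ $m=m+_M m$. This is exactly where the idempotency hypothesis on the semilattice $M$ is used, and it is the sole nontrivial point of the verification; the remaining instances ($r$ or $s$ equal to $0$) collapse using $0(m)=0_M$ and the neutrality of $0_M$. Thus the main (and only real) obstacle is checking this one additivity identity, which the semilattice structure supplies for free, and the two halves of the lemma are seen to be inverse to each other: part~\eqref{lem25.1.1} says the operations of a $\mathbf{B}$-semimodule force a semilattice, and part~\eqref{lem25.1.2} says a semilattice carries exactly one such structure.
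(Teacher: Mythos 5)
Your proposal is correct and takes essentially the same route as the paper: part~\eqref{lem25.1.1} via $1+1=1$ forcing $m+_M m=m$, and part~\eqref{lem25.1.2} by observing that idempotency (equivalently, $\mathrm{id}_M+\mathrm{id}_M=\mathrm{id}_M$) is precisely what makes the forced assignments $0\mapsto \mathbf{0}_M$, $1\mapsto\mathrm{id}_M$ a genuine $\mathbf{B}$-semimodule structure, with uniqueness immediate from the axioms. You simply spell out the routine axiom checks that the paper's two-line proof leaves implicit.
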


\begin{proof}
Let $M\in \mathbf{B}\text{-}\mathrm{sMod}$. Then $1+1=1$ implies $m+m=m$, for any $m\in M$.
This proves Claim~\eqref{lem25.1.1}. Conversely, if $M$ is a 
semilattice, then $\mathrm{id}_M+\mathrm{id}_M=\mathrm{id}_M$
which implies Claim~\eqref{lem25.1.2}.
\end{proof}

Let $\mathbf{SLat}^1$ denote the category of all monoid semi-lattices. From Lemma~\ref{lem25.1} it 
follows that the category $\mathbf{B}\text{-}\mathrm{sMod}$ is isomorphic to $\mathbf{SLat}^1$.

\subsection{Extreme $\mathbf{B}$-semimodules}\label{s25.3}

\begin{theorem}\label{thm25.2}
Let $M\in \mathbf{B}\text{-}\mathrm{sMod}$. Then the following conditions are equivalent.
\begin{enumerate}[$($i$)$]
\item\label{thm25.2.1} $M$ is minimal.
\item\label{thm25.2.2} $M$ is elementary. 
\item\label{thm25.2.3} $M$ is simple. 
\item\label{thm25.2.4} $M$ is isomorphic to the left regular semimodule ${}_{\mathbf{B}}\mathbf{B}$. 
\end{enumerate}
\end{theorem}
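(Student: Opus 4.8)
The plan is to first unwind what the $\mathbf{B}$-action contributes and then dispatch the implications. By Lemma~\ref{lem25.1}, every $M\in\mathbf{B}\text{-}\mathrm{sMod}$ is a semilattice in which $1$ acts as the identity and $0$ sends everything to $0_M$. I would record two consequences up front, since both later arguments lean on them. First, a subsemimodule of $M$ is exactly a submonoid: closure under the action is automatic because $1(n)=n$ and $0(n)=0_M$ already lie in any submonoid. Second, an $R$-congruence on $M$ is exactly a monoid congruence: the action conditions $r(m)\sim r(n)$ hold automatically for $r=1$ (trivially) and for $r=0$ (both sides are $0_M$).

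Next I would clear the easy implications. Since being simple means being minimal and elementary, $(iii)\Rightarrow(i)$ and $(iii)\Rightarrow(ii)$ hold by definition. For $(iv)\Rightarrow(iii)$, if $M\cong{}_{\mathbf{B}}\mathbf{B}$ then $|M|=2$, and it is already observed after the definition of simple that any semimodule with two elements is simple. It therefore suffices to prove $(i)\Rightarrow(iv)$ and $(ii)\Rightarrow(iv)$: the chains $(iv)\Rightarrow(iii)\Rightarrow(i)\Rightarrow(iv)$ and $(iii)\Rightarrow(ii)\Rightarrow(iv)\Rightarrow(iii)$ then close all four conditions into one equivalence class.

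For $(i)\Rightarrow(iv)$ I would choose any non-zero $m\in M$ and use idempotency $m+m=m$ to see that $\{0_M,m\}$ is a submonoid, hence a subsemimodule distinct from $\{0_M\}$. Minimality forces $M=\{0_M,m\}$, so $|M|=2$ and $M\cong{}_{\mathbf{B}}\mathbf{B}$.

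The real work is $(ii)\Rightarrow(iv)$, which I expect to be the main obstacle: I must exhibit a nontrivial congruence whenever $|M|\geq 3$. For a fixed $a\in M$, consider the relation $\sim_a$ given by $x\sim_a y$ iff $x+a=y+a$; commutativity and idempotency show at once that $\sim_a$ respects $+$, so it is a monoid congruence and hence an $R$-congruence by the reduction above. The key observation is that $0_M+a=a=a+a$, so $0_M\sim_a a$; thus for any non-zero $a$ we already have $\sim_a\neq{=_M}$. On the other hand $\sim_a$ equals $M\times M$ precisely when $x+a=y+a$ for all $x,y$, i.e. (putting $x=0_M$) when $a$ is an absorbing top element satisfying $a+y=a$ for all $y$, and such an element is unique if it exists. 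Now if $|M|\geq 3$ there are at least two distinct non-zero elements, which cannot both be the top, so some non-zero $a$ is non-top and $\sim_a$ is neither $=_M$ nor $M\times M$, contradicting elementarity. Hence $|M|=2$ and $M\cong{}_{\mathbf{B}}\mathbf{B}$, completing $(ii)\Rightarrow(iv)$.
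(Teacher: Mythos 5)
Your proof is correct, and its skeleton---reduce to semilattices via Lemma~\ref{lem25.1}, observe that subsemimodules are just submonoids and $\mathbf{B}$-congruences are just monoid congruences, then prove (i)$\Rightarrow$(iv) and (ii)$\Rightarrow$(iv) separately---matches the paper; in particular your (i)$\Rightarrow$(iv) via the two-element submonoid $\{0_M,m\}$ is exactly the paper's argument. Where you genuinely diverge is the implication (ii)$\Rightarrow$(iv). The paper uses a single canonical congruence: since $M$ is an idempotent commutative monoid, the non-zero elements form an ideal, and the Rees congruence with classes $\{0_M\}$ and $M\setminus\{0_M\}$ is a $\mathbf{B}$-congruence which is visibly not full, hence by elementarity is the equality relation, forcing $M\setminus\{0_M\}$ to be a singleton. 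You instead deploy the family of translation congruences $x\sim_a y\Leftrightarrow x+a=y+a$, use idempotency to get $0_M\sim_a a$, and conclude that elementarity forces every non-zero $a$ to be absorbing, while absorbing elements are unique. Both routes are sound and each leans on idempotency at exactly one point (yours to see $0_M\sim_a a$, the paper's to see that the non-zero elements form an ideal). The paper's Rees-congruence device has the advantage of scaling directly to the later variants in Theorems~\ref{thm25.3}, \ref{thm25.4} and \ref{thm25.5}, where the relevant ideal becomes the set of non-invertible elements; your translation congruences are a standard, reusable tool for commutative monoids and yield the extra structural observation that the unique non-zero element of an elementary $\mathbf{B}$-semimodule must be the absorbing top of the semilattice. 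One small point of hygiene: having excluded $|M|\geq 3$, you should say explicitly that elementary semimodules are non-zero by definition before asserting $|M|=2$ (you use this implicitly), and note that the two-element case is isomorphic to ${}_{\mathbf{B}}\mathbf{B}$ because, by Lemma~\ref{lem25.1}, the $\mathbf{B}$-action on a two-element semilattice is forced.
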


\begin{proof}
As $|\mathbf{B}|=2$, Claim~\eqref{thm25.2.4} implies all other claims.

On the other hand, let $M\in \mathbf{B}\text{-}\mathrm{sMod}$ with identity $0$. Then, for any $m\in M$, 
from Lemma~\ref{lem25.1} it follows that $\{0,m\}$ is a subsemimodule. Therefore minimality 
of $M$ implies $M=\{0,m\}$. As any semimodule with two elements is simple, we get \eqref{thm25.2.1}$\Rightarrow$\eqref{thm25.2.3}$\Rightarrow$\eqref{thm25.2.4}.

Finally, let $M\in \mathbf{B}\text{-}\mathrm{sMod}$ with identity $0$  be elementary. Then all
$m\in M$ different from $0$ form a $\mathbf{B}$-stable ideal in $M$. As $M$ is elementary, it follows
that this ideal must contain exactly one element. This gives the implication 
\eqref{thm25.2.2}$\Rightarrow$\eqref{thm25.2.4} and completes the proof.
\end{proof}

\subsection{Simple boolean representations of finite groups}\label{s25.4}

Let $G$ be a finite group and $\mathbf{B}[G]$ its group semiring over $\mathbf{B}$.
We can consider $\mathbf{B}$ as the {\em trivial} $\mathbf{B}[G]$-semimodule where each
$g\in G$ acts as the identity.

\begin{theorem}\label{thm25.3}
Let $M\in \mathbf{B}[G]\text{-}\mathrm{sMod}$. Then the following conditions are equivalent.
\begin{enumerate}[$($i$)$]
\item\label{thm25.3.1} $M$ is minimal.
\item\label{thm25.3.2} $M$ is elementary. 
\item\label{thm25.3.3} $M$ is simple. 
\item\label{thm25.3.4} $M$ is isomorphic to the trivial semimodule. 
\end{enumerate}
\end{theorem}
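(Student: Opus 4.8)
The plan is to mirror the proof of Theorem~\ref{thm25.2}, replacing the ad hoc two-element subsemimodules used there (which need no longer be $\mathbf{B}[G]$-stable) by $G$-invariant analogues. The starting point is that, since $\mathbf{B}$ is a subsemiring of $\mathbf{B}[G]$, every $M\in\mathbf{B}[G]\text{-}\mathrm{sMod}$ is in particular a $\mathbf{B}$-semimodule, hence a semilattice by Lemma~\ref{lem25.1}\eqref{lem25.1.1}, in which $0_M$ is the least element; in particular a finite sum of elements of $M$ equals $0_M$ only if every summand equals $0_M$. The one fact that makes the group case behave like the two-element case is invertibility: for $g\in G$ and $0_M\ne m\in M$ one has $g(m)\ne 0_M$, since otherwise $m=g^{-1}(g(m))=g^{-1}(0_M)=0_M$. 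This observation, which I would isolate as a preliminary remark, is the crux of the whole argument. The implications \eqref{thm25.3.4}$\Rightarrow$\eqref{thm25.3.1}, \eqref{thm25.3.2}, \eqref{thm25.3.3} are immediate, since the trivial semimodule has two elements and any two-element semimodule is simple, while \eqref{thm25.3.3} is by definition the conjunction of \eqref{thm25.3.1} and \eqref{thm25.3.2}. Hence it suffices to establish \eqref{thm25.3.1}$\Rightarrow$\eqref{thm25.3.4} and \eqref{thm25.3.2}$\Rightarrow$\eqref{thm25.3.4}.

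For \eqref{thm25.3.1}$\Rightarrow$\eqref{thm25.3.4}, assume $M$ is minimal and choose $0_M\ne m\in M$. Put $e:=\sum_{g\in G}g(m)$, a legitimate element as $G$ is finite. Reindexing shows $h(e)=e$ for all $h\in G$, so $e$ is $G$-invariant, and therefore $\{0_M,e\}$ is closed under addition and under the action of every element of $\mathbf{B}[G]$ (a nonzero one sends $e$ to $e$, the zero one sends everything to $0_M$); thus $\{0_M,e\}$ is a subsemimodule. Since the identity of $G$ acts as $\mathrm{id}_M$, the element $m$ is one of the summands of $e$, whence $e\ne 0_M$ by the remark on sums above. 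Minimality now forces $M=\{0_M,e\}$, and as each $g$ fixes $e$ the action is trivial, so $M$ is the trivial semimodule.

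For \eqref{thm25.3.2}$\Rightarrow$\eqref{thm25.3.4}, assume $M$ is elementary and define $\sim$ on $M$ by $m\sim m'$ if and only if either $m=m'$ or both $m$ and $m'$ are nonzero, so that the classes are $\{0_M\}$ and $M\setminus\{0_M\}$. I would then verify that $\sim$ is a $\mathbf{B}[G]$-congruence. Compatibility with addition is immediate in a semilattice, since a sum is nonzero as soon as one summand is. Compatibility with the action is exactly where the invertibility remark enters: for a nonzero $r\in\mathbf{B}[G]$, corresponding to a nonempty subset $S\subseteq G$, the sum $r(m)=\sum_{g\in S}g(m)$ has a nonzero summand whenever $m\ne 0_M$, hence is nonzero, so $r$ preserves $M\setminus\{0_M\}$; and $r=0$ sends everything to $0_M$. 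Since $M\ne\{0_M\}$, the relation $\sim$ is not the full relation, so elementarity forces it to be equality; this means $M\setminus\{0_M\}$ consists of a single element, so $|M|=2$ and $M$ is the trivial semimodule.

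The only genuine subtlety, and the step I would be most careful about, is checking that $\sim$ respects the $\mathbf{B}[G]$-action in the elementary case: this is precisely the point at which one uses that $G$ is a group rather than an arbitrary monoid, through the non-vanishing $g(m)\ne 0_M$. Everything else is forced by the semilattice structure from Lemma~\ref{lem25.1} and the observation that two-element semimodules are simple.
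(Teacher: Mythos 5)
Your proposal is correct and follows essentially the same route as the paper: for minimality you use the $G$-invariant element $\sum_{g\in G}g(m)$ to produce a two-element subsemimodule, and for elementarity your relation $\sim$ is exactly the Rees congruence associated to the ideal $M\setminus\{0_M\}$ of non-zero elements, which is the paper's argument. Your justification that $e\neq 0_M$ via the identity summand (rather than via each $g$ acting as an automorphism) is a minor, equally valid variation, and the semilattice preliminaries you spell out are implicit in the paper's appeal to the band structure.
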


A substantial part of this theorem is contained in \cite[Theorem~4.4]{IRS}.

\begin{proof}
The implications \eqref{thm25.3.3}$\Rightarrow$\eqref{thm25.3.1} and
\eqref{thm25.3.3}$\Rightarrow$\eqref{thm25.3.2} follow from the definitions. The trivial $\mathbf{B}[G]$-semimodule 
has only two elements and hence is simple. This gives the implication
\eqref{thm25.3.4}$\Rightarrow$\eqref{thm25.3.3}.

Let $M\in \mathbf{B}[G]\text{-}\mathrm{sMod}$ be minimal and $m\in M$ be a non-zero element.
As each $g\in G$ acts on $M$ via an automorphism, we have $g(m)\neq 0$, for all $g\in G$.
As non-zero elements of a monoidal commutative band form an ideal, it follows that the element
$n:=\sum_{g\in G}g(m)$ is non-zero. This gives that $\{0,n\}$ is a non-zero $\mathbf{B}[G]$-sub\-semi\-module.
By minimality, we have $\{0,n\}=M$,
establishing the implication \eqref{thm25.3.1}$\Rightarrow$\eqref{thm25.3.4}.
 
Let $M\in \mathbf{B}[G]\text{-}\mathrm{sMod}$ be elementary. As mentioned in the previous
paragraph, the set of all non-zero elements of  $M$ forms $\mathbf{B}[G]$-stable ideal.
As $M$ is elementary, the corresponding Rees congruence must be the equality relation. Therefore
$M$ has only two elements and thus is isomorphic to the trivial $\mathbf{B}[G]$-subsemimodule.
This gives the implication \eqref{thm25.3.2}$\Rightarrow$\eqref{thm25.3.4}
and completes the proof.
\end{proof}

\subsection{Extreme semimodules over finite cardinality semirings}\label{s25.5}

For a non-negative integer $k$, we denote by $\mathbb{N}_k$ the Rees quotient of the semiring
$(\mathbb{Z}_{\geq 0},+,\cdot,0,1)$ modulo the congruence with classes  
$\{0\}, \{1\},\dots,\{k-1\},I_k:=\{m\,:\,m\geq k\}$. We have 
$\mathbf{B}\cong \mathbb{N}_1$. The semirings $\mathbb{N}_k$ appear naturally in the theory of
multisemigroups with multiplicities developed in \cite{Fo}.

The map $\psi_k:\mathbb{N}_k\to \mathbf{B}$, given by 
\begin{displaymath}
\psi_k(i):=
\begin{cases}
0, & i=0,\\
1, & i\neq 0;
\end{cases}
\end{displaymath}
is a homomorphism of semirings. Via $\psi$, we may view the left regular $\mathbf{B}$-semimodule
${}_\mathbf{B}\mathbf{B}$ as an $\mathbb{N}_k$-semimodule, denoted ${}_{\mathbb{N}_k}\mathbf{B}$.

\begin{theorem}\label{thm25.4}
Let $M\in \mathbb{N}_k\text{-}\mathrm{sMod}$. Then the following conditions are equivalent.
\begin{enumerate}[$($i$)$]
\item\label{thm25.4.1} $M$ is minimal.
\item\label{thm25.4.2} $M$ is elementary. 
\item\label{thm25.4.3} $M$ is simple. 
\item\label{thm25.4.4} $M$ is isomorphic to ${}_{\mathbb{N}_k}\mathbf{B}$. 
\end{enumerate}
\end{theorem}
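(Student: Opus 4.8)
The plan is to mirror the structure of the proofs of Theorems~\ref{thm25.2} and \ref{thm25.3}, since $\mathbb{N}_k$ retains the key feature that made those arguments work: the underlying additive monoid of any semimodule is still a commutative band. First I would observe that in $\mathbb{N}_k$ we have $k+k = k$ in the Rees quotient (as $2k \geq k$, both land in $I_k$, so the class of $k$ is additively idempotent), and more relevantly $1 + 1 + \dots$ eventually stabilizes; the cleanest fact to extract is that for $M \in \mathbb{N}_k\text{-}\mathrm{sMod}$ the element $1 \in \mathbb{N}_k$ acts as the identity while the additive structure forces idempotency. Concretely, for any $m \in M$, consider $k \cdot m = \underbrace{m +_M \cdots +_M m}_{k}$; I would check that $(k) m = (k+k) m = (2k) m$ in $M$, so $km$ is an additively idempotent element, and this will be the handle for producing subsemimodules.

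Next I would prove the ``easy'' implications and the characterization of the target. Since ${}_{\mathbb{N}_k}\mathbf{B}$ has exactly two elements, it is simple by the general remark that any semimodule of cardinality $2$ is simple; this gives \eqref{thm25.4.4}$\Rightarrow$\eqref{thm25.4.3}, and \eqref{thm25.4.3}$\Rightarrow$\eqref{thm25.4.1}, \eqref{thm25.4.3}$\Rightarrow$\eqref{thm25.4.2} are immediate from the definitions. The substance is in \eqref{thm25.4.1}$\Rightarrow$\eqref{thm25.4.4} and \eqref{thm25.4.2}$\Rightarrow$\eqref{thm25.4.4}. For minimality, I would take a nonzero $m \in M$ and build a small subsemimodule: the submonoid generated by the $\mathbb{N}_k$-orbit of $m$ must equal $M$ by minimality, but I want to exhibit a two-element subsemimodule to force $|M| = 2$. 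The natural candidate is $\{0_M, n\}$ where $n$ is an additively idempotent element of $M$ that is also fixed (up to the two-valued collapse) by the action; using that $\psi_k$ factors the action through $\mathbf{B}$ on any band, I expect $n = km$ to generate precisely $\{0_M, n\}$ as a subsemimodule, whence minimality gives $M = \{0_M, n\} \cong {}_{\mathbb{N}_k}\mathbf{B}$.

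For the elementary case I would follow the Boolean template: show that the set of nonzero elements of $M$ forms an $\mathbb{N}_k$-stable ideal of the band $M$ (nonzero elements of a commutative monoidal band are closed under addition and under the action, since no nonzero element can be sent to $0_M$ by $1$, and the action of every $i \neq 0$ factors compatibly), so the associated Rees congruence is an $\mathbb{N}_k$-congruence. Elementarity then forces this congruence to be the equality relation, which happens only when the ideal is a single element, i.e. $|M| = 2$, giving $M \cong {}_{\mathbb{N}_k}\mathbf{B}$ and the implication \eqref{thm25.4.2}$\Rightarrow$\eqref{thm25.4.4}.

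The main obstacle, and the place where $\mathbb{N}_k$ differs genuinely from $\mathbf{B}$ and from $\mathbf{B}[G]$, is verifying that the action of $\mathbb{N}_k$ on a band $M$ really does factor through $\psi_k$, i.e. that every nonzero scalar $i \in \mathbb{N}_k$ acts the same way as $1$ on the underlying \emph{set-level} behaviour needed for these arguments. This is not literally true as endomorphisms (e.g. $2$ acts by $m \mapsto m +_M m = m$ on a band, which happens to coincide with the identity \emph{because} $M$ is idempotent), so the crux is to leverage idempotency: in a band, $i \cdot m = m$ for every $i \geq 1$ whenever the scalar is represented by repeated addition, and I must confirm this collapse is exactly what makes nonzero elements stable and what makes $\{0_M, n\}$ closed. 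I expect the verification that $i(m) = m$ for all $i \in \{1, \dots, k-1\}$ and $i(m) \in \{0_M, m\}$ for $i \in I_k$, on any band, to be the technical heart; once that reduction to $\mathbf{B}$-behaviour is in place, the minimal and elementary arguments are essentially those of Theorem~\ref{thm25.2}.
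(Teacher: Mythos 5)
Your plan is built on a premise that is false for every $k\geq 2$: the underlying additive monoid of an $\mathbb{N}_k$-semimodule is \emph{not} in general a band. In $\mathbb{N}_k$ we have $1+1=2\neq 1$ (for $k=2$ the sum is the class $I_2\neq 1$), so nothing forces $m+m=m$; the left regular semimodule ${}_{\mathbb{N}_k}\mathbb{N}_k$ is itself a counterexample. The only additively idempotent nonzero scalar is the top class $I_k$ (since $I_k+I_k=I_k$), so the only elements of a general $M$ you may assert to be idempotent are those of the form $k\cdot m$ --- which you do correctly observe. But everything you then route through band-ness is unavailable: the claim that the action factors through $\psi_k$ ``on any band'', the claim $i(m)=m$ for $1\leq i\leq k-1$, and the claim that the nonzero elements of ``the band $M$'' form an $\mathbb{N}_k$-stable ideal all presuppose that $M$ consists of idempotents, which is exactly as strong as the conclusion $M\cong{}_{\mathbb{N}_k}\mathbf{B}$ you are trying to prove. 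Your designated ``technical heart'' (the reduction of every nonzero scalar to $\mathbf{B}$-behaviour) is therefore circular, and this infects both the minimal and the elementary implication as you have outlined them.

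There is also a second, independent gap in the minimal case: your candidate $\{0_M,\,k\cdot m\}$ is indeed a subsemimodule (idempotency of $k\cdot m$ gives additive closure, and $i\cdot(k\cdot m)=(i\,I_k)(m)=k\cdot m$ for all $i\neq 0$ since $ik\geq k$), but you never rule out $k\cdot m=0_M$, in which case this subsemimodule is zero and minimality yields nothing. The one-line lemma that repairs both defects --- and is the step the paper's proof pivots on when it asserts ``in particular, $k\cdot m\neq 0$'' --- is the absorption identity $I_k+1=I_k$ in $\mathbb{N}_k$, which gives $k\cdot m=k\cdot m+m$ for all $m\in M$; hence $k\cdot m=0_M$ forces $m=0_M$. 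The same identity shows $0_M$ is the only additively invertible element of $M$ (if $m+x=0_M$ then $k\cdot m$ is an invertible idempotent, hence zero, hence $m=0_M$), and then, since every nonzero scalar $i$ satisfies $i=(i-1)+1$ resp.\ $I_k=(k-1)+1$, we get $i(m)=m+(\cdot)$, so nonzero elements are preserved by nonzero scalars. With this in place the correct arguments are exactly the paper's: minimality gives $M=\{0_M,k\cdot m\}\cong{}_{\mathbb{N}_k}\mathbf{B}$, and in the elementary case the Rees congruence modulo the $\mathbb{N}_k$-stable ideal of non-invertible (equivalently, nonzero) elements is not the full relation, so it is the equality relation and $M\cong{}_{\mathbb{N}_k}\mathbf{B}$. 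In short: you found the right idempotent $k\cdot m$, but you replaced the nondegeneracy and invertibility analysis it must support with an unproven (and false) global idempotency assumption.
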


\begin{proof}
As $|\mathbf{B}|=2$, Claim~\eqref{thm25.4.4} implies all other claims, in particular, we have
\eqref{thm25.4.4}$\Rightarrow$\eqref{thm25.4.3}$\Rightarrow$\eqref{thm25.4.1} and
\eqref{thm25.4.4}$\Rightarrow$\eqref{thm25.4.3}$\Rightarrow$\eqref{thm25.4.2}.

Let now $M\in \mathbb{N}_k\text{-}\mathrm{sMod}$ be minimal and $0\neq m\in M$. Then $i\cdot m=k\cdot m$,
for all $i\geq k$. In particular, $k\cdot m\neq 0$. This implies that $\{0,k\cdot m\}$ is an
$\mathbb{N}_k$-subsemimodule of $M$. Consequently, $\{0,k\cdot m\}=M$ due to the minimality of $M$.
Therefore \eqref{thm25.4.1}$\Rightarrow$\eqref{thm25.4.4}.

Finally, let $M\in \mathbb{N}_k\text{-}\mathrm{sMod}$ be elementary. The argument of the previous paragraph
implies that $0$ is the only invertible element of $M$. Therefore $M$ must contain non-invertible elements.
As $M$ is commutative, all non-invertible elements form an ideal. This ideal is, clearly, invariant under the
action of $\mathbb{N}_k$. The Rees quotient modulo this ideal is, clearly, isomorphic to ${}_{\mathbb{N}_k}\mathbf{B}$.
Hence we have $M\cong {}_{\mathbb{N}_k}\mathbf{B}$ since $M$ is elementary.
Therefore \eqref{thm25.4.2}$\Rightarrow$\eqref{thm25.4.4} and the proof is complete.
\end{proof}

For any finite group $G$, we can consider ${}_{\mathbb{N}_k}\mathbf{B}$ as the {\em trivial}
$\mathbb{N}_k[G]$-semimodule where all $g\in G$ act as the identity. This semimodule is, clearly, simple.
\vspace{1cm}

\begin{theorem}\label{thm25.5}
Let $M\in \mathbb{N}_k[G]\text{-}\mathrm{sMod}$. Then the following conditions are equivalent.
\begin{enumerate}[$($i$)$]
\item\label{thm25.5.1} $M$ is minimal.
\item\label{thm25.5.2} $M$ is elementary. 
\item\label{thm25.5.3} $M$ is simple. 
\item\label{thm25.5.4} $M$ is isomorphic to the trivial semimodule. 
\end{enumerate}
\end{theorem}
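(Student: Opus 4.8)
The plan is to follow the same four-way equivalence pattern established in the preceding theorems, combining the two key structural inputs: Theorem~\ref{thm25.4}, which already classifies extreme $\mathbb{N}_k$-semimodules, and the group-averaging trick from Theorem~\ref{thm25.3}. The easy implications \eqref{thm25.5.3}$\Rightarrow$\eqref{thm25.5.1}, \eqref{thm25.5.3}$\Rightarrow$\eqref{thm25.5.2}, and \eqref{thm25.5.4}$\Rightarrow$\eqref{thm25.5.3} come straight from the definitions together with the observation that the trivial semimodule is isomorphic to ${}_{\mathbb{N}_k}\mathbf{B}$, which has only two elements and is therefore simple. So the whole content lies in proving \eqref{thm25.5.1}$\Rightarrow$\eqref{thm25.5.4} and \eqref{thm25.5.2}$\Rightarrow$\eqref{thm25.5.4}.

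For \eqref{thm25.5.1}$\Rightarrow$\eqref{thm25.5.4}, I would start from a minimal $M$ and a non-zero element $m\in M$. The first thing I want is a non-zero element fixed by all of $G$; as in Theorem~\ref{thm25.3}, each $g\in G$ acts by an automorphism so $g(m)\neq 0$, and since $M$ is a commutative band (it is in particular an $\mathbb{N}_k$-semimodule, hence an $\mathbb{N}_1=\mathbf{B}$-semimodule after applying $\psi_k$, so it is idempotent) the non-zero elements form an ideal and $n:=\sum_{g\in G}g(m)$ is non-zero and $G$-invariant. Then I would bring in the $\mathbb{N}_k$-structure: by the argument of Theorem~\ref{thm25.4}, $k\cdot n\neq 0$ and $i\cdot(k\cdot n)=k\cdot n$ for all $i\geq k$, so $\mathbb{N}_k$ acts on $k\cdot n$ exactly as on ${}_{\mathbb{N}_k}\mathbf{B}$. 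Because $k\cdot n$ is still $G$-fixed, the set $\{0,\,k\cdot n\}$ is closed under all of $\mathbb{N}_k[G]$ and is thus a non-zero subsemimodule; minimality forces $M=\{0,\,k\cdot n\}\cong{}_{\mathbb{N}_k}\mathbf{B}$ as the trivial semimodule.

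For \eqref{thm25.5.2}$\Rightarrow$\eqref{thm25.5.4}, I would argue with congruences. The non-zero elements of $M$ form an $\mathbb{N}_k[G]$-stable ideal (stability under $G$ because the $g$ act by automorphisms, stability under $\mathbb{N}_k$ because non-invertibility is preserved, exactly as in the proof of Theorem~\ref{thm25.4}), and the Rees quotient by this ideal is isomorphic to ${}_{\mathbb{N}_k}\mathbf{B}$ viewed as the trivial semimodule. Since $M$ is elementary, the associated Rees congruence must be the equality relation, whence $M$ itself has exactly two elements and is the trivial semimodule.

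The main obstacle is a bookkeeping one rather than a conceptual one: I must make sure that the element $k\cdot n$ produced in the minimality argument is \emph{simultaneously} $G$-fixed and $\mathbb{N}_k$-stable, so that $\{0,k\cdot n\}$ is genuinely an $\mathbb{N}_k[G]$-subsemimodule and not merely an $\mathbb{N}_k$- or $\mathbf{B}[G]$-subsemimodule. The two averaging/absorption operations commute because the $\mathbb{N}_k$-action and the $G$-action commute inside $\mathbb{N}_k[G]$, so applying $k\cdot$ to the $G$-invariant $n$ keeps it $G$-invariant; verifying that this element is still non-zero is exactly the point where the non-nilpotence property $k\cdot n\neq 0$ from Theorem~\ref{thm25.4} is indispensable. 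Once this compatibility is pinned down, both nontrivial implications reduce to the already-established single-variable cases.
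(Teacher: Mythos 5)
Your overall route coincides with the paper's: the easy implications are dispatched the same way, the minimal case via the $G$-average of a $k$-fold multiple of a non-zero element, and the elementary case via the congruence separating invertible from non-invertible elements; in fact your element $k\cdot n=\sum_{g\in G}k\cdot g(m)$ is literally the element $n=\sum_{g\in G}g(k\cdot m)$ that the paper uses. However, one step of your argument is genuinely wrong as written: you claim that every $\mathbb{N}_k$-semimodule is a commutative band because it becomes a $\mathbf{B}$-semimodule ``after applying $\psi_k$''. Restriction of scalars goes the other way around: the homomorphism $\psi_k\colon\mathbb{N}_k\to\mathbf{B}$ lets one view a $\mathbf{B}$-semimodule as an $\mathbb{N}_k$-semimodule (this is exactly how ${}_{\mathbb{N}_k}\mathbf{B}$ is constructed), not conversely. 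The claim itself is false for $k\geq 2$: in the left regular $\mathbb{N}_k$-semimodule one has $1+1=2\neq 1$. Consequently your justification that $n:=\sum_{g\in G}g(m)$ is non-zero, which rests on ``the non-zero elements form an ideal'' via the band property, collapses at that point.

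The gap is local and repairable with tools you already invoke, and the repair is precisely why the paper multiplies by $k$ \emph{before} summing over $G$. Since $k=k+1$ in $\mathbb{N}_k$, one has $k\cdot x=k\cdot x+x$, so $k\cdot x=0$ forces $x=0$; and since $2k=k$ in $\mathbb{N}_k$, each $k\cdot g(m)$ is a non-zero additive idempotent. Now a sum containing a non-zero additive idempotent $e$ cannot vanish: $e+x=0$ gives $e=e+(e+x)=e+x=0$. Hence $k\cdot n=\sum_{g\in G}k\cdot g(m)\neq 0$ directly, with no need for the intermediate claim $n\neq 0$, and the rest of your minimal-case argument (idempotency and $G$-invariance of $k\cdot n$, so that $\{0,k\cdot n\}$ is an $\mathbb{N}_k[G]$-subsemimodule) goes through verbatim. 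The same correction should be made in your elementary case: the reason the non-zero elements form an $\mathbb{N}_k[G]$-stable ideal is again the idempotent trick, not the band property --- because $k\cdot m$ is a non-zero idempotent for every $m\neq 0$, no non-zero element is additively invertible, so ``non-zero'' coincides with ``non-invertible'' and your Rees-quotient argument then matches the paper's two-class congruence argument exactly.
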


\begin{proof}
We, clearly,  have \eqref{thm25.5.4}$\Rightarrow$\eqref{thm25.5.3}$\Rightarrow$\eqref{thm25.5.1} and
\eqref{thm25.5.4}$\Rightarrow$\eqref{thm25.5.3}$\Rightarrow$\eqref{thm25.5.2}. 
 
To prove \eqref{thm25.5.1}$\Rightarrow$\eqref{thm25.5.4}, let $M$ be a minimal  
$\mathbb{N}_k[G]$-semimodule and $0\neq m\in M$. Then $i\cdot m=k\cdot m$,
for all $i\geq k$. In particular, $k\cdot m\neq 0$ is an (additive) idempotent. Let 
$n:=\sum_{g\in G}g(k\cdot m)=\sum_{g\in G}(k\cdot g(m))$. Then $n$ is an additive idempotent and $\{0,n\}$
is a $G$-invariant $\mathbb{N}_k$-subsemimodule of $M$. Hence $M=\{0,n\}$ implying
Claim~\eqref{thm25.5.4}.

To prove \eqref{thm25.5.2}$\Rightarrow$\eqref{thm25.5.4}, let $M$ be an elementary  
$\mathbb{N}_k[G]$-semimodule and  $0\neq m\in M$. Then $i\cdot m=k\cdot m$,
for all $i\geq k$. In particular, $k\cdot m\neq 0$ is an (additive) idempotent and thus
$m$ is not additively invertible. In particular, $M$ contains non-invertible elements. 
Hence the equivalence relation on $M$ with two equivalence classes, the set of all invertible
and the set of all non-invertible elements, is an $\mathbb{N}_k[G]$-congruence. As $M$ is elementary,
it follows that both equivalence classes of this congruence must be singletons, implying 
Claim~\eqref{thm25.5.4}.
\end{proof}

\section{Extreme $\mathbb{Z}_{\geq 0}$-semimodules}\label{s3}

\subsection{$\mathbb{Z}_{\geq 0}$-semimodules}\label{s3.1}

Let $\mathbf{AMon}$ denote the category of all commutative mo\-no\-ids and monoid homomorphisms.

\begin{proposition}\label{prop3.0}
Each commutative monoid $M$ has the unique structure of a $\mathbb{Z}_{\geq 0}$-se\-mi\-module, namely, the one 
given by
\begin{displaymath}
i(m)=
\begin{cases}
0_M,& i=0;\\
\underbrace{m+m+\dots+m}_{i\text{ times}},,& i>0; 
\end{cases}
\end{displaymath}
for $i\in \mathbb{Z}_{\geq 0}$ and $m\in M$.
Consequently, the categories $\mathbb{Z}_{\geq 0}\text{-}\mathrm{sMod}$ and 
$\mathbf{AMon}$ are canonically isomorphic.
\end{proposition}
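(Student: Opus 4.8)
The plan is to prove the statement in two parts. First I would show that the displayed formula really does define a $\mathbb{Z}_{\geq 0}$-semimodule structure on any commutative monoid $M$, and that this structure is the \emph{unique} one. Second I would promote this pointwise correspondence to a canonical isomorphism of categories $\mathbb{Z}_{\geq 0}\text{-}\mathrm{sMod}\cong\mathbf{AMon}$. The guiding principle throughout is that $\mathbb{Z}_{\geq 0}$ is the free semiring on no generators in a suitable sense: the element $1$ generates it additively, so the action of $1$ forces the action of every other scalar.

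For the existence-and-uniqueness part, I would start from uniqueness, since it essentially dictates the formula. Given \emph{any} $\mathbb{Z}_{\geq 0}$-semimodule structure on $M$, the axioms $0(m)=0_M$ and $1(m)=m$ pin down the action of $0$ and $1$. For $i>0$, writing $i=1+1+\dots+1$ and applying the distributivity axiom $(r+s)(m)=r(m)+_M s(m)$ repeatedly (formally, an induction on $i$) forces $i(m)=\underbrace{m+\dots+m}_{i}$. Hence at most one structure exists, and it must be the displayed one. For existence, I would then simply verify that the displayed formula satisfies all six semimodule axioms of Subsection~\ref{s2.2}. The checks $r(m+_M n)=r(m)+_M r(n)$, $r(0_M)=0_M$, $(r+s)(m)=r(m)+_M s(m)$, and $(rs)(m)=r(s(m))$ are all routine inductions using only commutativity and associativity of $+_M$; commutativity of $M$ is exactly what makes the first of these hold. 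The remaining two axioms hold by the definition of the formula. None of this is deep, so I would state it as a direct verification rather than writing out every induction.

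For the categorical statement, I would construct functors in both directions and check they are mutually inverse. In one direction, the forgetful functor $\mathbb{Z}_{\geq 0}\text{-}\mathrm{sMod}\to\mathbf{AMon}$ sends a semimodule $(M,+_M,0_M)$ to its underlying commutative monoid and an $R$-semimodule homomorphism to its underlying monoid homomorphism; this is well defined because every $R$-semimodule homomorphism is in particular a monoid homomorphism. In the other direction, the first part of the proposition equips each commutative monoid with its unique $\mathbb{Z}_{\geq 0}$-semimodule structure, giving a map on objects. The point that needs verification is that this is functorial on morphisms, i.e.\ that every monoid homomorphism $\alpha:M\to N$ automatically intertwines the $\mathbb{Z}_{\geq 0}$-actions, satisfying $\alpha(i(m))=i(\alpha(m))$. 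This again follows by induction: $\alpha$ preserves $0_M$ and finite sums, so $\alpha\big(\underbrace{m+\dots+m}_{i}\big)=\underbrace{\alpha(m)+\dots+\alpha(m)}_{i}$. Thus every monoid homomorphism is already a semimodule homomorphism, and conversely. The two functors are identity on underlying sets, structures and maps, so they are mutually inverse, and the isomorphism is canonical.

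I do not expect a genuine obstacle here; the content is entirely formal. The one place worth stating carefully, rather than any technical difficulty, is the role of \emph{commutativity} of $M$: it is needed both to guarantee $r(m+_M n)=r(m)+_M r(n)$ and because the semiring axioms were set up so that $\mathrm{End}_{\mathbf{Mon}}(M)$ is a semiring only for commutative $M$. So the mildly subtle step is to make explicit that the correspondence matches the category $\mathbf{AMon}$ of \emph{commutative} monoids exactly, neither more nor less, which is precisely why the hypothesis ``commutative'' appears on both sides.
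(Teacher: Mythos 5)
Your proposal is correct and is precisely the routine verification that the paper compresses into its one-line proof (``This follows directly from the definitions''): uniqueness forced by $0(m)=0_M$, $1(m)=m$ and additivity in the scalar, existence by direct check, and functoriality via the observation that monoid homomorphisms automatically commute with $i$-fold sums. There is no divergence in approach; you have simply written out the inductions the paper leaves implicit, and your remark on where commutativity of $M$ enters is accurate.
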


\begin{proof}
This follows directly from the definitions. 
\end{proof}

\subsection{Some simple $\mathbb{Z}_{\geq 0}$-semimodules}\label{s3.2}

The map $\varphi:\mathbb{Z}_{\geq 0}\to \mathbf{B}$, defined by
\begin{displaymath}
\varphi(i)=
\begin{cases}
0,& i=0;\\
1, & i>0;
\end{cases}
\end{displaymath}
is a homomorphism of semirings. This defines on $\mathbf{B}$ the structure of a $\mathbb{Z}_{\geq 0}$-semimodule
via $i(m)=\varphi(i)m$, for $i\in \mathbb{Z}_{\geq 0}$ and $m\in \mathbf{B}$. Note that 
$|\mathbf{B}|=2$ and hence this $\mathbb{Z}_{\geq 0}$-semimodule is simple.

For a positive integer $n$, let $\mathbb{Z}_n$, as usual, denote the ring of integer residue classes modulo $n$.
The map 
\begin{displaymath}
i\mapsto i\,\mathrm{mod}\,n
\end{displaymath}
is a semiring homomorphism from $\mathbb{Z}_{\geq 0}$ to $\mathbb{Z}_n$.
Similarly to the above, this defines on $\mathbb{Z}_n$ the structure of a $\mathbb{Z}_{\geq 0}$-semimodule.

\begin{lemma}\label{lem3.1}
For $n$ as above, the following conditions are equivalent.
\begin{enumerate}[$($i$)$]
\item\label{lem3.1.1} The  $\mathbb{Z}_{\geq 0}$-semimodule $\mathbb{Z}_n$ is minimal.
\item\label{lem3.1.2} The  $\mathbb{Z}_{\geq 0}$-semimodule $\mathbb{Z}_n$ is elementary. 
\item\label{lem3.1.3} The  $\mathbb{Z}_{\geq 0}$-semimodule $\mathbb{Z}_n$ is simple. 
\item\label{lem3.1.4} The number $n$ is a prime number. 
\end{enumerate}
\end{lemma}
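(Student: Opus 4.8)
The plan is to collapse all four conditions onto a single statement about the subgroup lattice of the cyclic group $\mathbb{Z}_n$, using crucially that $\mathbb{Z}_n$ is a finite abelian \emph{group}. First I would record, via Proposition~\ref{prop3.0}, that the $\mathbb{Z}_{\geq 0}$-action on $\mathbb{Z}_n$ is just $i(m)=m+m+\dots+m$ ($i$ summands), i.e. $i(m)=im \bmod n$; in particular $i(m)$ always lies in the subgroup generated by $m$, so the action never leaves a subgroup.

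The first main step is to identify the subsemimodules of $\mathbb{Z}_n$ with its subgroups. A subsemimodule is a submonoid closed under the action. Since $\mathbb{Z}_n$ is finite, any submonoid is automatically a subgroup (a non-empty subset of a finite group closed under addition is closed under inverses, as suitable multiples of any element return to $0$), and conversely any subgroup $N$ is closed under the action because $i(x)=x+\dots+x\in N$ for $x\in N$. Hence subsemimodules of $\mathbb{Z}_n$ are exactly its subgroups, and the only ones that are $\{0\}$ or all of $\mathbb{Z}_n$ are the trivial ones. By the classification of subgroups of a cyclic group (one per divisor of $n$), $M=\mathbb{Z}_n$ is therefore minimal if and only if $n$ has no divisors other than $1$ and $n$, i.e. if and only if $n$ is prime. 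This yields \eqref{lem3.1.1}$\iff$\eqref{lem3.1.4}.

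The second main step is to identify the $\mathbb{Z}_{\geq 0}$-congruences on $\mathbb{Z}_n$ with its subgroups as well. Since $\mathbb{Z}_n$ is a group, every congruence has its classes given by the cosets of some subgroup $N$, as recalled at the end of Subsection~\ref{s2.5}. The relation $m\sim m'$ iff $m-m'\in N$ is automatically $\mathbb{Z}_{\geq 0}$-invariant, because $i(m)-i(m')=i(m-m')\in N$ whenever $m-m'\in N$; so, conversely, \emph{every} subgroup does give a congruence. Thus congruences correspond bijectively to subgroups, the equality relation and the full relation corresponding to $N=\{0\}$ and $N=\mathbb{Z}_n$. Consequently $M$ is elementary if and only if $\mathbb{Z}_n$ has only these two subgroups, i.e. again if and only if $n$ is prime, giving \eqref{lem3.1.2}$\iff$\eqref{lem3.1.4}. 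As simplicity is by definition the conjunction of minimality and elementariness, \eqref{lem3.1.3} is then equivalent to the rest.

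I do not expect a genuine obstacle here; the two points needing a moment's care are that a submonoid of a finite group is a subgroup, and that $\mathbb{Z}_{\geq 0}$-invariance of a coset congruence is automatic, after which everything reduces to the elementary classification of subgroups of a cyclic group. The one conceptual remark worth flagging is that, in contrast with Example~\ref{ex2.25}, the group structure of $\mathbb{Z}_n$ forces subsemimodules and congruences into the \emph{same} bijection with subgroups, which is precisely why minimality and elementariness coincide in this case.
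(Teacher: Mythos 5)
Your proof is correct and follows essentially the same route as the paper's: it reduces everything to the group structure of $\mathbb{Z}_n$ via the two observations that a submonoid of a finite group is a subgroup and that congruences on a group are given by cosets of a subgroup, so that all four conditions amount to $\mathbb{Z}_n$ being a simple group, i.e.\ $n$ prime. Your version merely spells out details the paper leaves implicit (action-invariance of subgroups and of coset congruences, and the divisor classification of subgroups of a cyclic group).
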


\begin{proof}
Note that $\mathbb{Z}_n$ is a finite group and hence a finite submonoid of 
$\mathbb{Z}_n$ is a subgroup. 
Any congruence on a group is given by cosets with respect to a subgroup, see \cite[Subsection~6.2]{GaMa}.
This means that the $\mathbb{Z}_{\geq 0}$-semimodule $\mathbb{Z}_n$ is simple if and only if
it is minimal if and only if it is elementary if and only if $\mathbb{Z}_n$ is a simple
group. The latter is the case if and only if  $n$ is prime. The claim follows.
\end{proof}

\subsection{Classification of extreme $\mathbb{Z}_{\geq 0}$-semimodules}\label{s3.3}

\begin{theorem}\label{thm3.2}
Let $M\in \mathbb{Z}_{\geq 0}\text{-}\mathrm{sMod}$. Then the following conditions are equivalent.
\begin{enumerate}[$($i$)$]
\item\label{thm3.2.1} $M$ is minimal.
\item\label{thm3.2.2} $M$ is elementary. 
\item\label{thm3.2.3} $M$ is simple. 
\item\label{thm3.2.4} $M$ is isomorphic to either $\mathbf{B}$ or 
$\mathbb{Z}_p$, for some prime $p$. 
\end{enumerate}
\end{theorem}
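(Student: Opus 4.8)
The plan is to use Proposition~\ref{prop3.0} to identify $\mathbb{Z}_{\geq 0}\text{-}\mathrm{sMod}$ with $\mathbf{AMon}$ and to observe that, because the action of $i\in\mathbb{Z}_{\geq 0}$ is just $i$-fold addition, a subsemimodule of $M$ is exactly a submonoid, and an $R$-congruence on $M$ is exactly a monoid congruence (the multiplicative compatibility being automatic). With this translation, the implications \eqref{thm3.2.4}$\Rightarrow$\eqref{thm3.2.3}$\Rightarrow$\eqref{thm3.2.1} and \eqref{thm3.2.4}$\Rightarrow$\eqref{thm3.2.3}$\Rightarrow$\eqref{thm3.2.2} are immediate: $\mathbf{B}$ has two elements and is therefore simple, $\mathbb{Z}_p$ is simple by Lemma~\ref{lem3.1}, and \emph{simple} means minimal and elementary by definition. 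Thus the real content lies in the two implications \eqref{thm3.2.1}$\Rightarrow$\eqref{thm3.2.4} and \eqref{thm3.2.2}$\Rightarrow$\eqref{thm3.2.4}.

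For \eqref{thm3.2.1}$\Rightarrow$\eqref{thm3.2.4}, I would fix a nonzero $m\in M$; since the submonoid $\langle m\rangle=\{0,m,2m,\dots\}$ is nonzero, minimality forces $M=\langle m\rangle$ to be monogenic. If $M$ contains a nonzero idempotent $e$, then $\{0,e\}$ is a nonzero submonoid, so $M=\{0,e\}\cong\mathbf{B}$. Otherwise $0$ is the only idempotent of $M$. An infinite monogenic commutative monoid is isomorphic to $\mathbb{Z}_{\geq 0}$, which is not minimal because $\{0,2m,4m,\dots\}$ is a proper submonoid; hence $M$ is finite. A finite monogenic commutative monoid has a unique idempotent in its eventually periodic part, and since here the only idempotent is $0$ this forces $0=cm$ for some $c\geq 1$, so $M$ is a finite cyclic group $\mathbb{Z}_d$. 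Finally, minimality says $\mathbb{Z}_d$ has no proper nontrivial subgroups, so $d=p$ is prime.

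For \eqref{thm3.2.2}$\Rightarrow$\eqref{thm3.2.4}, I would split on whether $M$ is a group. If $m\in M$ is a non-unit then so is $m+n$ for every $n$ (otherwise $m$ would be invertible), so the set $N$ of non-units is an ideal of $M$ whenever it is nonempty, and it is always proper since $0\notin N$. If $M$ is not a group, then $N$ is a proper nonempty ideal, so its Rees congruence is not the full relation and hence, by elementariness, must be the equality relation; this forces $|N|=1$, say $N=\{n_0\}$, and the ideal property gives $m+n_0=n_0$ for all $m$, i.e. $n_0$ is absorbing. Were the unit group $U=M\setminus\{n_0\}$ to contain a nonzero element, the relation collapsing all of $U$ to a single class while keeping $\{n_0\}$ separate would be a nontrivial proper congruence, contradicting elementariness; hence $U=\{0\}$ and $M=\{0,n_0\}\cong\mathbf{B}$. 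If instead $M$ is a group, then by the coset description of congruences on groups recalled in Subsection~\ref{s2.5}, elementariness means $M$ has no proper nontrivial subgroups, and an abelian group with this property is exactly $\mathbb{Z}_p$ for a prime $p$.

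I expect the elementary implication \eqref{thm3.2.2}$\Rightarrow$\eqref{thm3.2.4} to be the main obstacle, since, unlike the minimal case, there is no single ``smallest subobject'' to exploit; the work is in choosing the right test congruences (the Rees congruence of the ideal of non-units, and the congruence that collapses the group of units) and in checking that the argument for $U=\{0\}$ goes through uniformly whether $U$ is finite or infinite. The minimal implication is comparatively routine once the structure theory of monogenic commutative monoids is invoked.
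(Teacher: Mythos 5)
Your proposal is correct and follows essentially the same route as the paper: the coset description of congruences on abelian groups handles the group case, a two-class unit/non-unit congruence handles the elementary non-group case, and extracting the idempotent from a finite cyclic submonoid (with the even-multiples submonoid ruling out the infinite case) handles the minimal case. The only cosmetic differences are that you run the minimal implication uniformly through monogenicity where the paper first splits off the group case, and that you force $|N|=1$ and then $|U|=1$ via two successive congruences where the paper uses the single two-class kernel of the homomorphism $M\to\mathbf{B}$ separating invertible from non-invertible elements.
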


\begin{proof}
That Claim~\eqref{thm3.2.4} implies Claims~\eqref{thm3.2.1}, \eqref{thm3.2.2} and \eqref{thm3.2.3}
is shown in Subsection~\ref{s3.2}.

Assume first that $M$ is a group. Any congruence on a group is given by cosets with respect to a subgroup, 
see \cite[Subsection~6.2]{GaMa}. This implies that 
Claims~\eqref{thm3.2.1}, \eqref{thm3.2.2} and \eqref{thm3.2.3} for $M$ are equivalent,
moreover, they are also equivalent to the requirement that $M$ is a simple group. 
This means that $M\cong \mathbb{Z}_p$, for some prime $p$, and hence implies Claim~\eqref{thm3.2.4}.

Now let us assume that $M$ is not a group. As $M$ is commutative, all Green's relations on $M$ coincide.
As $M$ is not a group, it must have at least two different $\mathcal{J}$-classes. Let $I$ be the
ideal of $M$ consisting of all non-invertible elements. Note that $I\neq M$. The map $\varphi:M\to \mathbf{B}$
given by
\begin{displaymath}
\varphi(m)=
\begin{cases}
0, & m\not\in I;\\
1, & m\in I; 
\end{cases}
\end{displaymath}
is an $\mathbb{Z}_{\geq 0}$-homomorphism. Therefore Claims~\eqref{thm3.2.2} and \eqref{thm3.2.3} imply
$M\cong \mathbf{B}$ which gives Claim~\eqref{thm3.2.4}.

Finally, assume Claim~\eqref{thm3.2.1}. Note that $\{0_M\}\cup I$ is a subsemimodule of $M$.
This implies that $M\setminus I=\{0_M\}$. Take any $m\in I$. If all elements $im$, where $i\in\mathbb{Z}_{\geq 0}$,
were different, then $\{0_M,2m,3m,4m,\dots\}$ would be a proper subsemimodule of $M$, which is not possible.
Hence the set $\mathbb{Z}_{\geq 0}m$ is finite. Clearly, $\{0_M\}\cup \mathbb{Z}_{\geq 0}m$ is a subsemimodule
of $M$ and hence coincides with $M$ due to Claim~\eqref{thm3.2.1}. Further, 
$\mathbb{Z}_{\geq 0}m$ is a finite semigroup and hence contains an idempotent, say $x$.
Then $\{0_M,x\}$ is a  subsemimodule
of $M$ and hence coincides with $M$ by Claim~\eqref{thm3.2.1}. The map
$0_M\mapsto 0$ and $x\mapsto 1$ is an isomorphism from $M$ to $\mathbf{B}$ which again gives Claim~\eqref{thm3.2.4}.
The proof is complete.
\end{proof}

\subsection{$\mathbb{Z}_{\geq 0}$-modules}\label{s3.4}

The natural embedding of the semiring $\mathbb{Z}_{\geq 0}$ into the ring $\mathbb{Z}$ is a homomorphism
of unital semirings. Therefore we have the corresponding restriction functor
\begin{displaymath}
\mathrm{Res}^{\mathbb{Z}}_{\mathbb{Z}_{\geq 0}}:\mathbb{Z}\text{-}\mathrm{sMod}\longrightarrow
\mathbb{Z}_{\geq 0}\text{-}\mathrm{sMod}.
\end{displaymath}
Note that $\mathbb{Z}\text{-}\mathrm{sMod}=\mathbb{Z}\text{-}\mathrm{Mod}$. 

\begin{proposition}\label{prop3.3}
The restriction induces an isomorphism of categories 
\begin{displaymath}
\mathrm{Res}^{\mathbb{Z}}_{\mathbb{Z}_{\geq 0}}:\mathbb{Z}\text{-}\mathrm{Mod}\longrightarrow
\mathbb{Z}_{\geq 0}\text{-}\mathrm{Mod}.
\end{displaymath}
\end{proposition}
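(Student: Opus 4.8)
The plan is to reduce everything to the underlying abelian group and show that, for groups, both the $\mathbb{Z}$-module and the $\mathbb{Z}_{\geq 0}$-module structures are uniquely determined and mutually compatible, so that the restriction functor is literally a bijection on objects and on all morphism sets. First I would recall that $\mathbb{Z}\text{-}\mathrm{Mod}$ is the category of abelian groups, where the $\mathbb{Z}$-action on an abelian group $A$ is forced to be $n\cdot a=a+\dots+a$ ($n$ summands) for $n\geq 0$ and $n\cdot a=-((-n)\cdot a)$ for $n<0$. On the other side, by Proposition~\ref{prop3.0}, every commutative monoid---hence in particular every abelian group---carries a unique $\mathbb{Z}_{\geq 0}$-semimodule structure, given again by repeated addition. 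Since this canonical $\mathbb{Z}_{\geq 0}$-action is exactly the restriction to $\mathbb{Z}_{\geq 0}$ of the unique $\mathbb{Z}$-action, the functor $\mathrm{Res}^{\mathbb{Z}}_{\mathbb{Z}_{\geq 0}}$ sends an abelian group $A$, viewed as a $\mathbb{Z}$-module, to the same abelian group $A$, viewed as a $\mathbb{Z}_{\geq 0}$-module.

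For bijectivity on objects I would argue as follows. A $\mathbb{Z}_{\geq 0}$-module is, by definition, a $\mathbb{Z}_{\geq 0}$-semimodule whose underlying monoid is a group, i.e. precisely an abelian group with its canonical $\mathbb{Z}_{\geq 0}$-action. Given such an object, equip its underlying abelian group with the unique $\mathbb{Z}$-module structure; its restriction recovers the original object, giving surjectivity. Injectivity is immediate, since $\mathrm{Res}$ preserves the underlying additive structure and the $\mathbb{Z}$-action on an abelian group is uniquely determined by that additive structure.

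It remains to check bijectivity on morphism sets. The key observation is that a monoid homomorphism $f\colon A\to B$ between abelian groups is automatically a group homomorphism (from $f(a)+f(-a)=f(0)=0_B$ one gets $f(-a)=-f(a)$), and that any group homomorphism automatically commutes with both the $\mathbb{Z}$- and the $\mathbb{Z}_{\geq 0}$-actions, since these are defined by iterated addition. Hence
\begin{displaymath}
\mathrm{Hom}_{\mathbb{Z}}(A,B)=\mathrm{Hom}_{\mathbb{Z}_{\geq 0}}(A,B)=\{\text{group homomorphisms }A\to B\},
\end{displaymath}
and $\mathrm{Res}^{\mathbb{Z}}_{\mathbb{Z}_{\geq 0}}$ acts as the identity on these sets.

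Being a bijection on objects and on all morphism sets, $\mathrm{Res}^{\mathbb{Z}}_{\mathbb{Z}_{\geq 0}}$ is then an isomorphism of categories. The only point requiring any care---and hence the \emph{main obstacle}, such as it is---is verifying that no $\mathbb{Z}_{\geq 0}$-module admits a second, exotic $\mathbb{Z}$-module structure restricting to its given $\mathbb{Z}_{\geq 0}$-action; but the uniqueness of the $\mathbb{Z}$-action on an abelian group settles this at once, so the argument is essentially a bookkeeping of the two uniqueness statements.
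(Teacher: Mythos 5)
Your proof is correct and follows the same route as the paper's: both arguments rest on the two observations that an abelian group carries a unique $\mathbb{Z}$-module (and, via Proposition~\ref{prop3.0}, a unique $\mathbb{Z}_{\geq 0}$-semimodule) structure given by iterated addition, and that a monoid homomorphism between abelian groups is automatically a group homomorphism, whence restriction is bijective on objects and morphism sets. You have simply spelled out in full the details that the paper's three-line proof leaves implicit.
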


\begin{proof}
Each  $\mathbb{Z}_{\geq 0}$-module is just an abelian group and hence a 
$\mathbb{Z}$-module in a unique way. If $M$ and $N$ are abelian groups and $\varphi:M\to N$
is a monoid homomorphism, then it is a group homomorphism. The claim follows.
\end{proof}

\section{Extreme $\mathbb{Z}_{\geq 0}[S_2]$-semimodules}\label{s5}

\subsection{$\mathbb{Z}_{\geq 0}[S_2]$-semimodules}\label{s5.1}

Let $S_2=\{e,s=(12)\}$ be the symmetric group on $\{1,2\}$. Let $\mathbb{Z}_{\geq 0}[S_2]$ be the
group semiring with coefficients in $\mathbb{Z}_{\geq 0}$.

From Proposition~\ref{prop3.0} it follows that a $\mathbb{Z}_{\geq 0}[S_2]$-semimodule can be understood
as a pair $(M,\tau)$, where $M$ is a commutative monoid and $\tau:M\to M$ is an involutive automorphism.
The automorphism $\tau$ represents the action of $s$.
A homomorphism $\varphi:(M,\tau)\to (N,\sigma)$ of $\mathbb{Z}_{\geq 0}[S_2]$-semimodules is a homomorphism of the underlying monoids which
intertwines $\tau$ and $\sigma$. A subsemimodule of $(M,\tau)$ is just a $\tau$-stable submonoid of $M$.
A $\mathbb{Z}_{\geq 0}[S_2]$-congruence on $(M,\tau)$ is a $\tau$-compatible congruence on $M$.

Every commutative monoid $M$ has the {\em trivial} structure of a $\mathbb{Z}_{\geq 0}[S_2]$-semimodule
given by $\tau=\mathrm{id}_M$. Such $M$ will be called {\em trivial extensions} of 
$\mathbb{Z}_{\geq 0}$-semimodules.

For a prime $p>2$, let $\tau_p:\mathbb{Z}_p\to \mathbb{Z}_p$ denote the automorphism
which sends $1$ to $p-1$. The $\mathbb{Z}_{\geq 0}[S_2]$-semimodule $(\mathbb{Z}_p,\tau_p)$ 
is, clearly,  simple.

\subsection{Extreme $\mathbb{Z}_{\geq 0}[S_2]$-semimodules}\label{s5.2}

\begin{theorem}\label{thm5.1}
Let $M\in \mathbb{Z}_{\geq 0}[S_2]\text{-}\mathrm{sMod}$. Then the following conditions are equivalent.
\begin{enumerate}[$($i$)$]
\item\label{thm5.1.1} $M$ is minimal.
\item\label{thm5.1.2} $M$ is elementary. 
\item\label{thm5.1.3} $M$ is simple. 
\item\label{thm5.1.4} $M$ is isomorphic to either the trivial extension of a 
simple $\mathbb{Z}_{\geq 0}$-semimodule or to $(\mathbb{Z}_p,\tau_p)$, for some prime $p>2$.
\end{enumerate}
\end{theorem}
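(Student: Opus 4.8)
The plan is to follow the same template used in Theorems~\ref{thm25.2}, \ref{thm25.4} and \ref{thm3.2}, namely to establish the cycle of implications around the four conditions, exploiting Proposition~\ref{prop3.0} to treat a $\mathbb{Z}_{\geq 0}[S_2]$-semimodule as a pair $(M,\tau)$ with $\tau$ an involutive automorphism of the commutative monoid $M$. The implications \eqref{thm5.1.4}$\Rightarrow$\eqref{thm5.1.3}$\Rightarrow$\eqref{thm5.1.1} and \eqref{thm5.1.3}$\Rightarrow$\eqref{thm5.1.2} are immediate: the trivial extensions of simple $\mathbb{Z}_{\geq 0}$-semimodules are $\mathbf{B}$ and $\mathbb{Z}_p$ by Theorem~\ref{thm3.2}, each of which is simple, and $(\mathbb{Z}_p,\tau_p)$ was already observed to be simple. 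So the real content is the two implications \eqref{thm5.1.1}$\Rightarrow$\eqref{thm5.1.4} and \eqref{thm5.1.2}$\Rightarrow$\eqref{thm5.1.4}.

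First I would dispose of the case where $M$ is a group. As in Theorem~\ref{thm3.2}, any congruence on a group $M$ comes from a $\tau$-stable subgroup (using \cite[Subsection~6.2]{GaMa} together with the observation that $\tau$-compatibility of the congruence forces the defining subgroup to be $\tau$-stable). Hence for groups the three notions of minimal, elementary and simple coincide, and they coincide with the requirement that $M$ has no proper nonzero $\tau$-stable subgroup. The task is then to classify abelian groups $M$ with an involution $\tau$ admitting no proper nonzero $\tau$-invariant subgroup. The underlying group must itself be either simple (forcing $M\cong\mathbb{Z}_p$) or a product of two copies of $\mathbb{Z}_p$ swapped by $\tau$; a short argument should show that in the latter case the diagonal is a proper $\tau$-invariant subgroup, ruling it out. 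So the group case yields $M\cong\mathbb{Z}_p$ with $\tau$ either trivial (the trivial extension, allowed only when this is still simple, i.e. any prime $p$) or nontrivial. For $p=2$ the only involution is the identity, giving the trivial extension of $\mathbb{Z}_2$; for $p>2$ the nontrivial involution is forced to be $\tau_p$ up to isomorphism, since $\mathrm{Aut}(\mathbb{Z}_p)$ is cyclic and the unique involution sends $1\mapsto p-1$.

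Next I would treat the non-group case, which is where the dichotomy between minimal and elementary should be handled separately but the outcome is the same, namely $M\cong\mathbf{B}$ (as a trivial extension). For the elementary direction I would mimic the non-group part of Theorem~\ref{thm3.2}: let $I$ be the ideal of non-invertible elements, note $I$ is $\tau$-stable because $\tau$ is an automorphism, and check that the map $\varphi:M\to\mathbf{B}$ collapsing $I$ to $1$ is a $\mathbb{Z}_{\geq 0}[S_2]$-homomorphism, its kernel a nontrivial congruence, forcing $M\cong\mathbf{B}$ (where necessarily $\tau=\mathrm{id}$ since $\mathbf{B}$ has a unique automorphism). For the minimal direction I would again start from $\{0_M\}\cup I$ being a $\tau$-stable subsemimodule to deduce $M\setminus I=\{0_M\}$, then for $m\in I$ argue that $\mathbb{Z}_{\geq 0}m$ is finite and contains an idempotent $x$; the subtlety is that $\{0_M,x\}$ need not be $\tau$-stable, so I would instead pass to a $\tau$-stable candidate such as $\{0_M,x,\tau(x),x+\tau(x)\}$ or directly use the idempotent $x+\tau(x)$, and invoke minimality to collapse to two elements.

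The main obstacle I anticipate is precisely this last point: unlike in the plain $\mathbb{Z}_{\geq 0}$ case, a subsemimodule must be $\tau$-stable, so the idempotent $x$ extracted from $\mathbb{Z}_{\geq 0}m$ does not immediately generate a two-element subsemimodule. The care needed is to produce a $\tau$-invariant idempotent (for instance $y:=x+\tau(x)$, which is idempotent in a commutative band and satisfies $\tau(y)=y$) so that $\{0_M,y\}$ is a genuine $\mathbb{Z}_{\geq 0}[S_2]$-subsemimodule; minimality then gives $M=\{0_M,y\}\cong\mathbf{B}$. Verifying that such a $\tau$-fixed idempotent is nonzero and that the resulting two-element object carries the trivial $\tau$-action is the one place where the $S_2$-equivariance genuinely changes the argument, and it is where I would concentrate the written proof.
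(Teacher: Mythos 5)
Your skeleton (easy implications, then a case split between modules and proper semimodules, handled separately for minimal and elementary) matches the paper's, and your elementary arguments are essentially the paper's own. But your minimal, non-group argument has a genuine gap at the finiteness step. You propose to show, as in Theorem~\ref{thm3.2}, that $\mathbb{Z}_{\geq 0}m$ is finite for a non-invertible $m$. In Theorem~\ref{thm3.2} this is witnessed by the submonoid $\{0_M,2m,3m,\dots\}$, which is proper whenever all the $im$ are distinct; over $\mathbb{Z}_{\geq 0}[S_2]$, however, this submonoid need not be $\tau$-stable, so it is not a subsemimodule and the minimality of $M$ yields no contradiction. You correctly flagged this very equivariance problem for the set $\{0_M,x\}$ and proposed the symmetrized idempotent $x+\tau(x)$, but the same problem already blocks the earlier step that produces $x$. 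The repair is to symmetrize \emph{before} extracting anything: $m':=m+\tau(m)$ is non-invertible (non-invertible elements form an ideal), hence non-zero, and it is $\tau$-fixed, so the subsemimodule $\{0_M\}\cup\mathbb{Z}_{>0}m'$ it generates is $\tau$-stable. Minimality forces this to be all of $M$, whence $\tau=\mathrm{id}_M$ and Theorem~\ref{thm3.2} applies wholesale, with no separate idempotent extraction needed. This is exactly the paper's one-line move: the element $m+s(m)$ is $s$-invariant, so $s$ acts as the identity on the subsemimodule it generates, hence on $M$.

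In the module case your route also leaves an unproved assertion: the dichotomy ``underlying group simple, or $\mathbb{Z}_p\times\mathbb{Z}_p$ swapped by $\tau$'' needs its own argument (one must first show $M$ is generated by $m$ and $\tau(m)$, then that $pM$ being $\tau$-invariant forces $M$ elementary abelian of rank at most $2$), and as stated it is too narrow, since an involution of $\mathbb{Z}_p^{2}$ need not be the swap; ruling out rank $2$ requires the $\pm 1$-eigenspace decomposition for odd $p$ and a fixed non-zero vector for $p=2$, not just the diagonal. All of this is salvageable, but the paper short-circuits it with the same symmetrization: for non-zero $m$, the subgroup generated by $m+s(m)$ is $s$-invariant with trivial $s$-action, so minimality (resp.\ being elementary) forces either $s=\mathrm{id}_M$, reducing to Theorem~\ref{thm3.2}, or $m+s(m)=0$ for all $m$, i.e.\ $s$ acts as negation; in the latter case \emph{every} subgroup is $s$-invariant, so $M\cong\mathbb{Z}_p$ with $s$ acting as $\tau_p$ and $p>2$. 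I recommend you adopt the element $m+\tau(m)$ as the organizing device throughout: it collapses both your problematic steps to a few lines.
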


\begin{proof}
Clearly, \eqref{thm5.1.4}$\Rightarrow$\eqref{thm5.1.3}$\Rightarrow$\eqref{thm5.1.1} and
\eqref{thm5.1.4}$\Rightarrow$\eqref{thm5.1.3}$\Rightarrow$\eqref{thm5.1.2}.
 
Let $M$ be a minimal $\mathbb{Z}_{\geq 0}[S_2]$-semimodule. Assume that $M$ has a non-invertible
element, say $m$. Then $s(m)$ is non-invertible and so is $m+s(m)$. The latter element is $s$-invariant.
Therefore $s$ acts as the identity on the submodule of $M$ generated by $m+s(m)$. This means that 
$M$ is isomorphic to the trivial extension of a simple $\mathbb{Z}_{\geq 0}$-semimodule.

Assume now that $M$ is a $\mathbb{Z}_{\geq 0}[S_2]$-module and that the action of $s$ is not trivial.
For any non-zero $m\in M$, we have $m+s(m)$ is $s$-invariant and hence generates a submodule of 
$M$ with trivial $s$-action. As $M$ is minimal, we thus have $m+s(m)=0$, for all $m\in M$.
In other words, $s(m)=-m$. Therefore any subgroup of $M$ is automatically $s$-invariant.
Hence minimality of $M$ implies $M\cong \mathbb{Z}_p$, for some prime $p$. If $p=2$, then the action
of $s$ is trivial. Hence $p>2$. This proves the implication \eqref{thm5.1.1}$\Rightarrow$\eqref{thm5.1.4}.

Let $M$ be an elementary $\mathbb{Z}_{\geq 0}[S_2]$-semimodule. If $M$ is a $\mathbb{Z}_{\geq 0}[S_2]$-module,
then, for any $m\in M$, the element $m+s(m)$ generates an $s$-invariant subgroup of $M$, say $N$.
As $M$ is elementary, we get that either $N=M$ or $N=\{0\}$. In the first case, $s$ acts on $M$
as the identity and hence $M$ is a simple $\mathbb{Z}_{\geq 0}$-module. In the second case 
$s$ acts on $M$ via the negation, in particular, any subgroup is $s$-invariant.
Altogether, we have that $M\cong \mathbb{Z}_p$ with $s$ acting either
trivially or via $\tau_p$, for $p>2$.

If $M$ has a non-invertible element, then we have a 
$\mathbb{Z}_{\geq 0}[S_2]$-congruence $\sim$ on $M$ with two equivalence classes given by all invertible 
and all non-invertible elements, respectively. Therefore the fact that $M$ is elementary implies that 
$M$ has two elements and the action of $s$ on $M$ is trivial. This proves  
the implication \eqref{thm5.1.2}$\Rightarrow$\eqref{thm5.1.4} and completes the proof.
\end{proof}

\subsection{Integral vs non-negative integral scalars}\label{s5.3}

The natural embedding of the semiring $\mathbb{Z}_{\geq 0}[S_2]$ into the ring 
$\mathbb{Z}[S_2]$ is a homomorphism of unital semirings. Therefore we have the corresponding restriction functor
\begin{displaymath}
\mathrm{Res}^{\mathbb{Z}[S_2]}_{\mathbb{Z}_{\geq 0}[S_2]}:\mathbb{Z}[S_2]\text{-}\mathrm{sMod}\longrightarrow
\mathbb{Z}_{\geq 0}[S_2]\text{-}\mathrm{sMod}.
\end{displaymath}
Note that $\mathbb{Z}[S_2]\text{-}\mathrm{sMod}=\mathbb{Z}[S_2]\text{-}\mathrm{Mod}$. 

\begin{proposition}\label{prop5.5}
The restriction induces an isomorphism of categories 
\begin{displaymath}
\mathrm{Res}^{\mathbb{Z}[S_2]}_{\mathbb{Z}_{\geq 0}[S_2]}:\mathbb{Z}[S_2]\text{-}\mathrm{Mod}\longrightarrow
\mathbb{Z}_{\geq 0}[S_2]\text{-}\mathrm{Mod}.
\end{displaymath}
\end{proposition}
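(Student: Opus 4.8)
The plan is to show that the restriction functor $\mathrm{Res}^{\mathbb{Z}[S_2]}_{\mathbb{Z}_{\geq 0}[S_2]}$ is both fully faithful and essentially surjective (indeed bijective on objects), exactly as in the proof of Proposition~\ref{prop3.3}, which handles the case of the trivial group. The key structural input is Proposition~\ref{prop3.3} itself: an abelian group is a $\mathbb{Z}$-module in a unique way and also a $\mathbb{Z}_{\geq 0}$-module in a unique way (via Proposition~\ref{prop3.0}), and these two structures agree on the underlying monoid since the action of a non-negative integer is repeated addition in both cases. The only extra datum carried by a $\mathbb{Z}[S_2]$- or $\mathbb{Z}_{\geq 0}[S_2]$-module over and above its $\mathbb{Z}$- or $\mathbb{Z}_{\geq 0}$-module structure is the action of the group element $s$, so the whole statement reduces to tracking that single automorphism.

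First I would check surjectivity on objects. A $\mathbb{Z}_{\geq 0}[S_2]$-module is, by the discussion in Subsection~\ref{s5.1}, a pair $(M,\tau)$ where $M$ is an abelian group (since we are looking at modules, not proper semimodules) and $\tau$ is an involutive automorphism of $M$. By Proposition~\ref{prop3.3}, the $\mathbb{Z}_{\geq 0}$-module $M$ is uniquely a $\mathbb{Z}$-module, and $\tau$, being a monoid automorphism of an abelian group, is automatically a group automorphism. Hence $(M,\tau)$ underlies a unique $\mathbb{Z}[S_2]$-module structure, and restricting it returns the original $\mathbb{Z}_{\geq 0}[S_2]$-module. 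This shows the functor is a bijection on objects.

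Next I would verify that the functor is bijective on morphisms. A morphism of $\mathbb{Z}_{\geq 0}[S_2]$-modules $(M,\tau)\to(N,\sigma)$ is a monoid homomorphism $\varphi\colon M\to N$ with $\varphi\circ\tau=\sigma\circ\varphi$; a morphism of $\mathbb{Z}[S_2]$-modules is a group homomorphism satisfying the same intertwining condition. Since $M$ and $N$ are abelian groups, every monoid homomorphism between them is a group homomorphism (the argument already used in Proposition~\ref{prop3.3}), so the two $\mathrm{Hom}$-sets coincide and the functor is the identity on them. Combining the two halves gives that $\mathrm{Res}^{\mathbb{Z}[S_2]}_{\mathbb{Z}_{\geq 0}[S_2]}$ is an isomorphism of categories.

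I do not expect a genuine obstacle here; the statement is a straightforward analogue of Proposition~\ref{prop3.3} with an extra commuting involution carried along for free. The one point deserving a sentence of care is the observation that the involution $\tau$ witnessing the $S_2$-action need not be re-verified to be a group automorphism on the $\mathbb{Z}$-module side: it is already a monoid automorphism, and monoid automorphisms of abelian groups are group automorphisms, so no compatibility is lost and none must be imposed. Everything else is the bookkeeping of transporting the unique $\mathbb{Z}$-module structure of Proposition~\ref{prop3.3} through the group element, so the proof can be kept to a few lines by simply invoking that proposition.
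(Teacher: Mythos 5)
Your proposal is correct and follows essentially the same route as the paper: the paper's proof also extends a $\mathbb{Z}_{\geq 0}[S_2]$-module uniquely to a $\mathbb{Z}[S_2]$-module by letting $-1$ act as negation and invokes the fact that monoid homomorphisms between abelian groups are group homomorphisms. You merely spell out the same argument in more detail via the $(M,\tau)$ description of Subsection~\ref{s5.1} and Proposition~\ref{prop3.3}, which is a fine (and slightly more explicit) presentation of the identical idea.
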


\begin{proof}
Each  $\mathbb{Z}_{\geq 0}[S_2]$-module extends uniquely to  a $\mathbb{Z}[S_2]$-module 
by letting $-1$ act as the negation. If $M$ and $N$ are abelian groups and $\varphi:M\to N$
is a monoid homomorphism, then it is a group homomorphism. The claim follows.
\end{proof}

\subsection{Kazhdan-Lusztig version of $\mathbb{Z}_{\geq 0}[S_2]$}\label{s5.4}

Consider the element $\theta:=e+s$ in $\mathbb{Z}_{\geq 0}[S_2]$. 
We have $\theta^2=2\theta$.
Note that, for the ring
$\mathbb{Z}[S_2]$, the elements $e$ and $\theta$ form a basis of $\mathbb{Z}[S_2]$ over $\mathbb{Z}$,
called the {\em Kazhdan-Lusztig basis}, see \cite{KL}. 
Note, however, that the elements $e$ and $\theta$ are no longer
a basis of $\mathbb{Z}_{\geq 0}[S_2]$ over $\mathbb{Z}_{\geq 0}$. 
We denote by $\widehat{\mathbb{Z}_{\geq 0}[S_2]}$
the subsemiring of $\mathbb{Z}_{\geq 0}[S_2]$ generated by $e$ and $\theta$. Then 
$\widehat{\mathbb{Z}_{\geq 0}[S_2]}$ is free over $\mathbb{Z}_{\geq 0}$ with basis $e$ and $\theta$.

The results of this subsection give some feeling about  how the choice of a base semiring might 
affect the classification of simple semimodules. 

As $\widehat{\mathbb{Z}_{\geq 0}[S_2]}$ is a (unital) subsemiring of $\mathbb{Z}_{\geq 0}[S_2]$,
we have the restriction functor
\begin{displaymath}
\mathrm{Res}^{\mathbb{Z}_{\geq 0}[S_2]}_{\widehat{\mathbb{Z}_{\geq 0}[S_2]}}:
\mathbb{Z}_{\geq 0}[S_2]\text{-}\mathrm{sMod}\longrightarrow
\widehat{\mathbb{Z}_{\geq 0}[S_2]}\text{-}\mathrm{sMod}.
\end{displaymath}

\begin{lemma}\label{lem5.2}
Let $M$ be a simple $\mathbb{Z}_{\geq 0}[S_2]$-semimodule. Then its restriction to 
$\widehat{\mathbb{Z}_{\geq 0}[S_2]}$ is a simple $\widehat{\mathbb{Z}_{\geq 0}[S_2]}$-semimodule.
\end{lemma}

\begin{proof}
This follows directly from the fact that all simple  $\mathbb{Z}_{\geq 0}[S_2]$-semimodules are either
simple abelian groups or have just two elements, see Theorem~\ref{thm5.1}.
\end{proof}

\begin{lemma}\label{lem5.3}
The  commutative monoid $\mathbf{B}$ has the unique structure of a $\widehat{\mathbb{Z}_{\geq 0}[S_2]}$-semimodule
in which  $\theta$ acts as $0$.
\end{lemma}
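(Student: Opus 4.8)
The plan is to realise the desired structure as the pullback of the regular $\mathbf{B}$-semimodule along a suitable semiring homomorphism, and then to extract uniqueness from the fact that $e$ is forced to act as the identity. The first step is to recall that, by definition, a $\widehat{\mathbb{Z}_{\geq 0}[S_2]}$-semimodule structure on the monoid $\mathbf{B}$ is the same datum as a semiring homomorphism $\pi\colon\widehat{\mathbb{Z}_{\geq 0}[S_2]}\to\mathrm{End}_{\mathbf{Mon}}(\mathbf{B})$. Since $\mathbf{B}$ has only two elements, an endomorphism in $\mathbf{Mon}$ is determined by its value on $1$, so $\mathrm{End}_{\mathbf{Mon}}(\mathbf{B})=\{\mathbf{0}_{\mathbf{B}},\mathrm{id}_{\mathbf{B}}\}$; comparing Cayley tables (using $1+1=1$ in $\mathbf{B}$) shows that this endomorphism semiring is isomorphic to $\mathbf{B}$ itself. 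Under this identification the condition that $\theta$ act as $0$ becomes simply $\pi(\theta)=0$.

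To produce such a $\pi$, I would use that $\widehat{\mathbb{Z}_{\geq 0}[S_2]}$ is free over $\mathbb{Z}_{\geq 0}$ with basis $\{e,\theta\}$, so that every element is uniquely of the form $ae+b\theta$, and consider the $e$-coefficient map $\lambda\colon ae+b\theta\mapsto a$. The main computation is that $\lambda$ is a semiring homomorphism onto $\mathbb{Z}_{\geq 0}$: additivity is immediate, and multiplicativity follows from $\theta^2=2\theta$ together with $e\theta=\theta e=\theta$, which show that in a product $(ae+b\theta)(a'e+b'\theta)$ the only contribution to the $e$-coefficient comes from $e\cdot e$, so the $e$-coefficient of the product is exactly $aa'$. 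Composing $\lambda$ with the semiring homomorphism $\mathbb{Z}_{\geq 0}\to\mathbf{B}$ sending every positive integer to $1$ and $0$ to $0$ (Subsection~\ref{s3.2}) yields a semiring homomorphism $\pi$ with $\pi(e)=1$ and $\pi(\theta)=0$. Pulling back the regular $\mathbf{B}$-semimodule along $\pi$ then gives a $\widehat{\mathbb{Z}_{\geq 0}[S_2]}$-semimodule structure on $\mathbf{B}$ in which $\theta$ acts as $0$, settling existence.

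For uniqueness, I would observe that any semiring homomorphism $\pi'$ realising such a structure must send $e$ to $1$, because $e$ is the multiplicative identity, and sends $\theta$ to $0$ by hypothesis; since $e$ and $\theta$ generate $\widehat{\mathbb{Z}_{\geq 0}[S_2]}$ as a semiring (equivalently, by freeness $\pi'(ae+b\theta)=a\pi'(e)+b\pi'(\theta)$ is forced by these two values), $\pi'$ must coincide with $\pi$. The only point genuinely requiring care is the multiplicativity of $\lambda$, and this is precisely where the relation $\theta^2=2\theta$ enters; everything else reduces to a routine verification of the homomorphism axioms.
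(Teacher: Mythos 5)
Your proof is correct and takes essentially the same approach as the paper: the paper's two-line proof just observes that the only relation to check is $\theta^2=2\theta$, which is compatible with $\theta\mapsto 0$ since $0^2=2\cdot 0$ in $\mathrm{End}_{\mathbf{Mon}}(\mathbf{B})$, with existence and uniqueness implicit from the freeness of $\widehat{\mathbb{Z}_{\geq 0}[S_2]}$ over $\mathbb{Z}_{\geq 0}$ on $\{e,\theta\}$ and the semimodule axiom forcing $e$ to act as the identity. Your factorization through the $e$-coefficient homomorphism $\lambda$ onto $\mathbb{Z}_{\geq 0}$ is merely a more explicit packaging of that same verification, since the multiplicativity of $\lambda$ is exactly the compatibility of $\theta\mapsto 0$ with $\theta^2=2\theta$.
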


\begin{proof}
We have $\theta^2=2\theta$. As $0^2=2\cdot 0$, the claim follows. 
\end{proof}

The $\widehat{\mathbb{Z}_{\geq 0}[S_2]}$-semimodule structure on $\mathbf{B}$ given by Lemma~\ref{lem5.3}
will be denoted $\mathbf{B}^{(0)}$.

The natural embedding of the semiring $\widehat{\mathbb{Z}_{\geq 0}[S_2]}$ into the ring 
$\mathbb{Z}[S_2]$ is a homomorphism of unital semirings. Therefore we have the corresponding restriction functor
\begin{displaymath}
\mathrm{Res}^{\mathbb{Z}[S_2]}_{\widehat{\mathbb{Z}_{\geq 0}[S_2]}}:
\mathbb{Z}[S_2]\text{-}\mathrm{sMod}\longrightarrow
\widehat{\mathbb{Z}_{\geq 0}[S_2]}\text{-}\mathrm{sMod}.
\end{displaymath}

\begin{proposition}\label{prop5.6}
The restriction induces an isomorphism of categories 
\begin{displaymath}
\mathrm{Res}^{\mathbb{Z}[S_2]}_{\widehat{\mathbb{Z}_{\geq 0}[S_2]}}:
\mathbb{Z}[S_2]\text{-}\mathrm{Mod}\longrightarrow
\widehat{\mathbb{Z}_{\geq 0}[S_2]}\text{-}\mathrm{Mod}.
\end{displaymath}
\end{proposition}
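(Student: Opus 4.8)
The plan is to mimic the proofs of Propositions~\ref{prop3.3} and \ref{prop5.5}, exploiting the fact that on the module (rather than semimodule) side subtraction is available, which lets us recover the action of $s$ that is lost under restriction. First I would unwind the two categories. An object of $\mathbb{Z}[S_2]\text{-}\mathrm{Mod}$ is an abelian group $M$ together with an involutive automorphism $\sigma$ (the action of $s$), the action of $-1$ being forced to be negation. An object of $\widehat{\mathbb{Z}_{\geq 0}[S_2]}\text{-}\mathrm{Mod}$ is an abelian group $M$ on which $e$ acts as $\mathrm{id}_M$ and $\theta$ acts as some group endomorphism $\Theta$; since $\widehat{\mathbb{Z}_{\geq 0}[S_2]}$ is free over $\mathbb{Z}_{\geq 0}$ on $e,\theta$ with the sole multiplicative relation $\theta^2=2\theta$, the only constraint is that $\Theta$ satisfy $\Theta^2=2\Theta$.

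Next I would construct the inverse functor on objects. Given such an $M$ with endomorphism $\Theta$, set $\sigma:=\Theta-\mathrm{id}_M$, which makes sense precisely because $M$ is an abelian group. A direct computation gives
\begin{displaymath}
\sigma^2=(\Theta-\mathrm{id}_M)^2=\Theta^2-2\Theta+\mathrm{id}_M=2\Theta-2\Theta+\mathrm{id}_M=\mathrm{id}_M,
\end{displaymath}
so $\sigma$ is an involution and hence defines a $\mathbb{Z}[S_2]$-module structure on $M$. Restricting this structure back along the inclusion $\widehat{\mathbb{Z}_{\geq 0}[S_2]}\hookrightarrow\mathbb{Z}[S_2]$ returns the original one, since $\theta=e+s$ then acts as $\mathrm{id}_M+\sigma=\Theta$. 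Conversely, for any $\mathbb{Z}[S_2]$-module the element $\theta=e+s$ acts as $\mathrm{id}_M+\sigma$, which automatically satisfies $(\mathrm{id}_M+\sigma)^2=2(\mathrm{id}_M+\sigma)$, so restriction does land in $\widehat{\mathbb{Z}_{\geq 0}[S_2]}\text{-}\mathrm{Mod}$. Uniqueness of the extension is immediate: $s=\theta-e$ forces $\sigma=\Theta-\mathrm{id}_M$, and $-1$ is forced to act as negation.

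For the morphisms, I would note that a homomorphism of $\widehat{\mathbb{Z}_{\geq 0}[S_2]}$-modules is a monoid homomorphism between abelian groups, hence a group homomorphism, that commutes with the actions of $e$ and $\theta$, i.e.\ with $\Theta$. Since $\sigma=\Theta-\mathrm{id}_M$ and the map is additive, commuting with $\Theta$ forces it to commute with $\sigma$; thus the same map is automatically a homomorphism of $\mathbb{Z}[S_2]$-modules, and conversely. As the two assignments are mutually inverse bijections on both objects and morphisms, they give the desired isomorphism of categories.

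I expect no serious obstacle: the only genuine content is the identity $\sigma^2=\mathrm{id}_M$, whose verification uses precisely the relation $\Theta^2=2\Theta$ coming from $\theta^2=2\theta$. The one point deserving emphasis is that the construction $\sigma=\Theta-\mathrm{id}_M$ is available \emph{only} because we work with modules (abelian groups) and not arbitrary semimodules, which is exactly why the statement is confined to the module subcategories, in parallel with Propositions~\ref{prop3.3} and \ref{prop5.5}.
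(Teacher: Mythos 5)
Your proposal is correct and follows essentially the same route as the paper: the paper's proof extends a $\widehat{\mathbb{Z}_{\geq 0}[S_2]}$-module to a $\mathbb{Z}[S_2]$-module by letting $-1$ act as negation, which (since $s=\theta-e$) is exactly your construction $\sigma=\Theta-\mathrm{id}_M$, and it likewise notes that monoid homomorphisms of abelian groups are group homomorphisms. You merely make explicit the verification $\sigma^2=\mathrm{id}_M$ from $\Theta^2=2\Theta$ and the freeness of the semiring on $e,\theta$, details the paper leaves implicit.
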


\begin{proof}
As mentioned above, $e$ and $\theta$ form a free $\mathbb{Z}$-basis of $\mathbb{Z}[S_2]$.
Each  $\widehat{\mathbb{Z}_{\geq 0}[S_2]}$-module extends uniquely to  a $\mathbb{Z}[S_2]$-module 
by letting $-1$ act as the negation. If $M$ and $N$ are abelian groups and $\varphi:M\to N$
is a monoid homomorphism, then it is a group homomorphism. The claim follows.
\end{proof}

\begin{theorem}\label{thm5.4}
Let $M\in \widehat{\mathbb{Z}_{\geq 0}[S_2]}\text{-}\mathrm{sMod}$. Then the following conditions are equivalent.
\begin{enumerate}[$($i$)$]
\item\label{thm5.4.1} $M$ is minimal.
\item\label{thm5.4.2} $M$ is elementary. 
\item\label{thm5.4.3} $M$ is simple. 
\item\label{thm5.4.4} $M$ is isomorphic to $\mathbf{B}^{(0)}$ or to the restriction of 
a simple $\mathbb{Z}_{\geq 0}[S_2]$-se\-mi\-mo\-du\-le.
\end{enumerate}
\end{theorem}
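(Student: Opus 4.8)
The plan is to reduce everything to the non-negative-integer case already settled in Theorem~\ref{thm3.2}. Throughout, write $R=\widehat{\mathbb{Z}_{\geq 0}[S_2]}$ and recall that $R$ is commutative and generated by $e$ and $\theta$ subject to $\theta^2=2\theta$, so that an $R$-semimodule is precisely a commutative monoid $M$ together with the monoid endomorphism $\vartheta:=\theta(\,{}_-\,)$, which satisfies $\vartheta\circ\vartheta=2\vartheta$ (that is, $\vartheta(\vartheta(m))=\vartheta(m)+\vartheta(m)$ for all $m$). The easy implications come first: $\mathbf{B}^{(0)}$ has two elements and is therefore simple, while restrictions of simple $\mathbb{Z}_{\geq 0}[S_2]$-semimodules are simple by Lemma~\ref{lem5.2}, giving \eqref{thm5.4.4}$\Rightarrow$\eqref{thm5.4.3}; and \eqref{thm5.4.3}$\Rightarrow$\eqref{thm5.4.1}, \eqref{thm5.4.3}$\Rightarrow$\eqref{thm5.4.2} are immediate from the definitions. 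It remains to prove \eqref{thm5.4.1}$\Rightarrow$\eqref{thm5.4.4} and \eqref{thm5.4.2}$\Rightarrow$\eqref{thm5.4.4}.

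The key observation, and the heart of the argument, is a dichotomy: for an extreme $M$ the endomorphism $\vartheta$ acts either as $0$ or as doubling $m\mapsto m+m$. Since $R$ is commutative, $\vartheta$ is an $R$-semimodule endomorphism of $M$; hence, by Proposition~\ref{prop2.1}, its image $\theta M=\mathrm{Im}(\vartheta)$ is a subsemimodule of $M$ and its kernel $\sim_{\vartheta}$ (with $m\sim_{\vartheta}n$ iff $\theta m=\theta n$) is an $R$-congruence. If $M$ is minimal, then $\theta M\in\{\{0_M\},M\}$: in the first case $\theta$ acts as $0$, while in the second $\vartheta$ is surjective, so writing an arbitrary $m$ as $m=\vartheta(n)$ and using $\vartheta\circ\vartheta=2\vartheta$ yields $\vartheta(m)=\vartheta(\vartheta(n))=2\vartheta(n)=2m$, i.e.\ $\vartheta$ is doubling. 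If $M$ is elementary, then $\sim_{\vartheta}$ is either the full relation or equality: in the first case $\theta m=\theta 0_M=0_M$ for all $m$, so $\theta$ acts as $0$, while in the second $\vartheta$ is injective, and then $\vartheta(\vartheta(m))=2\vartheta(m)=\vartheta(2m)$ forces $\vartheta(m)=2m$, again doubling.

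Once the dichotomy is in place, the reduction is routine. In either case the $R$-action is just scalar multiplication by an element of $\mathbb{Z}_{\geq 0}$: the operator $ae+b\theta$ acts as $m\mapsto am$ when $\theta$ acts as $0$ and as $m\mapsto(a+2b)m$ when $\theta$ acts as doubling. Consequently every subsemimodule of $M$ is just a submonoid and every $R$-congruence is just a monoid congruence, because any submonoid is closed under the zero and doubling maps and any monoid congruence is compatible with them. Therefore $M$ is minimal (resp.\ elementary) as an $R$-semimodule exactly when it is so as a $\mathbb{Z}_{\geq 0}$-semimodule, and Theorem~\ref{thm3.2} gives $M\cong\mathbf{B}$ or $M\cong\mathbb{Z}_p$ for a prime $p$.

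It remains to match these with the list in \eqref{thm5.4.4}. When $\theta$ acts as $0$, the monoid $\mathbf{B}$ yields exactly $\mathbf{B}^{(0)}$, while $\mathbb{Z}_p$ yields $(\mathbb{Z}_p,\theta=0)$, which is the restriction of the simple $\mathbb{Z}_{\geq 0}[S_2]$-semimodule $(\mathbb{Z}_p,\tau_p)$ for $p>2$ (as $m+\tau_p(m)=0$) and of the trivial extension of $\mathbb{Z}_2$ when $p=2$. When $\theta$ acts as doubling, $\mathbf{B}$ (on which doubling is the identity) and $\mathbb{Z}_p$ are the restrictions of the trivial extensions of $\mathbf{B}$ and $\mathbb{Z}_p$, respectively, since there $\theta=e+s$ acts as $m\mapsto m+m$. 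In all cases $M$ is isomorphic to $\mathbf{B}^{(0)}$ or to the restriction of a simple $\mathbb{Z}_{\geq 0}[S_2]$-semimodule, as required. The main obstacle is the dichotomy of the second paragraph: the crucial points are recognizing that commutativity of $R$ makes $\vartheta$ an $R$-homomorphism (so that Proposition~\ref{prop2.1} applies to both its image and its kernel) and then exploiting the relation $\vartheta\circ\vartheta=2\vartheta$ together with injectivity or surjectivity to pin $\vartheta$ down to doubling; after that the problem collapses to the already-solved $\mathbb{Z}_{\geq 0}$-case.
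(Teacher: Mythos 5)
Your proof is correct, but it takes a genuinely different route from the paper's. The paper splits into the module case (dispatched via Theorem~\ref{thm5.1} together with Propositions~\ref{prop5.5} and \ref{prop5.6}) and the proper case, where it runs a hands-on analysis: for minimality, a case split on whether $\tau(m)$ is invertible, using finite order and idempotents in cyclic submonoids; for elementarity, the decomposition $M=M_1\coprod M_2\coprod M_3$ by invertibility and the two congruences this produces. You instead prove, uniformly for minimal and elementary $M$ and with no module/proper dichotomy at all, that the single relation $\theta^2=2\theta$ forces $\vartheta=\theta({}_-)$ to act as zero or as doubling: commutativity of $\widehat{\mathbb{Z}_{\geq 0}[S_2]}$ makes $\vartheta$ an $R$-endomorphism, so Proposition~\ref{prop2.1} applies both to $\mathrm{Im}(\vartheta)$ (which minimality forces to be $\{0_M\}$ or $M$) and to $\sim_{\vartheta}$ (which elementarity forces to be full or equality), and in the nondegenerate branch surjectivity, respectively injectivity, combined with $\vartheta\circ\vartheta=2\vartheta$ pins $\vartheta$ down to $m\mapsto 2m$ --- both little computations check out. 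After that, since every $ae+b\theta$ acts by a non-negative integer scalar, $R$-subsemimodules and $R$-congruences coincide with submonoids and monoid congruences, so Theorem~\ref{thm3.2} finishes the classification; your matching of $(\mathbf{B},\theta=0)$, $(\mathbb{Z}_p,\theta=0)$, $(\mathbf{B},\theta=\mathrm{id})$ and $(\mathbb{Z}_p,\theta=2\cdot{}_-)$ against item \eqref{thm5.4.4}, including the observation that for $p=2$ doubling equals zero so that $(\mathbb{Z}_2,\theta=0)$ is the restriction of the trivial extension, is accurate and complete. What your argument buys is brevity and uniformity: a Schur-type use of image and kernel replaces all of the paper's case analysis. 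What it costs is generality: it leans essentially on commutativity of the semiring (without it $\vartheta$ need not be an $R$-map), whereas the paper's invertibility-based analysis is the template that gets reused in the noncommutative settings of Sections~\ref{s7} and \ref{s9}.
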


\begin{proof}
The implications \eqref{thm5.4.4}$\Rightarrow$\eqref{thm5.4.3}$\Rightarrow$\eqref{thm5.4.1} and
\eqref{thm5.4.4}$\Rightarrow$\eqref{thm5.4.3}$\Rightarrow$\eqref{thm5.4.2} follow from 
Lemmata~\ref{lem5.2} and \ref{lem5.3}.

In the case when $M$ is a $\widehat{\mathbb{Z}_{\geq 0}[S_2]}$-module, the implications 
\eqref{thm5.4.1}$\Rightarrow$\eqref{thm5.4.4} and \eqref{thm5.4.2}$\Rightarrow$\eqref{thm5.4.4}
follow from Theorem~\ref{thm5.1} and Propositions~\ref{prop5.5} and \ref{prop5.6}.

Let $M$ be a proper semimodule and $\tau:M\to M$ an endomorphism satisfying $\tau^2=2\tau$.
Let $m\in M$ be a non-invertible element. Then the submonoid $N$ of $M$ generated by 
$m$ and $\tau(m)$ is $\tau$-invariant as $\tau^2=2\tau$. 

Suppose first that $M$ is minimal and consider two cases.

{\bf Case~1.} Assume first that $\tau(m)$ is invertible.

Here we must have $M=N$ by minimality. If the order of $m$ were infinite, $M$ would have a proper
$\tau$-invariant submonoid generated by $2m$ and $2\tau(m)$, contradicting the minimality of $M$. 
Therefore the order of $m$ is finite. Let $p$ be an idempotent in the cyclic submonoid generated by $m$. 
Then $\tau(p)$, on the one hand, must be an idempotent as $\tau$ is an endomorphism, but, on the
other hand, must be invertible as $\tau(m)$ is. Hence $\tau(p)=0$. Therefore the submonoid 
$\{0,p\}$ is $\tau$-invariant and hence coincides with $M$ due to minimality. Thus $M\cong \mathbf{B}^{(0)}$.

{\bf Case~2.} Assume now that $\tau(m)$ is not invertible.

Consider the submonoid $N$ of $M$ generated by $\tau(m)$. This 
is $\tau$-invariant due to $\tau^2=2\tau$ and hence $M=N$ by minimality. If 
the order of $\tau(m)$ were infinite, then $M$ would have a proper $\tau$-invariant submonoid
generated by $2\tau(m)$,  contradicting the minimality of $M$. 
Therefore the order of $\tau(m)$ is finite. Let $p$ be an idempotent in the cyclic submonoid generated by 
$\tau(m)$. Then $\{0,p\}$ is a $\tau$-invariant submonoid of $M$ 
and hence coincides with $M$ due to minimality. Thus $M\cong \mathbf{B}$ with $\tau$ acting as the identity.

This establishes the implication \eqref{thm5.4.1}$\Rightarrow$\eqref{thm5.4.4}.

Suppose now that $M$ is elementary. Write $M=M_1\coprod M_2\coprod M_3$, where
\begin{gather*}
M_1:=\{\text{ all invertible elements of } M\};\\
M_2:=\{m\in M\,:m\not\in M_1,\tau(m)\in M_1\};\\
M_3:=M\setminus(M_1\cup M_2).
\end{gather*}
Note that $M_3$ is an ideal and that $|M_2\cup M_3|>0$ as $M$ is not a module. 
Also, note that $\tau^2=2\tau$ implies that 
$\tau(M_3)\subset M_3$. We naturally have two cases.

{\bf Case~A.} Assume that $M_3$ is empty.

In this case we have a $\tau$-stable congruence with congruence classes $M_1$ and $M_2$.
As $M$ is elementary, it follows that $|M_1|=|M_2|=1$ and hence $M\cong \mathbf{B}^{(0)}$.

{\bf Case~B.} Assume that $M_3$ is not empty.

In this case we have a $\tau$-stable congruence with congruence classes $M_1\cup M_2$ and $M_3$,
the quotient with respect to which is isomorphic to $\mathbf{B}$ with $\tau$ acting as the identity.
As $M$ is elementary, $M$ is isomorphic to $\mathbf{B}$ with $\tau$ acting as the identity.

This proves the implication \eqref{thm5.4.2}$\Rightarrow$\eqref{thm5.4.4} and completes the proof.
\end{proof}

\section{Extreme semimodules over non-negative real numbers}\label{s6}

\subsection{Construction}\label{s6.1}

The present section generalizes Examples~\ref{ex2.25} and \ref{ex2.26}. Let $\Bbbk$ be a subfield of $\mathbb{R}$.
Consider the semiring $\Bbbk_{\geq 0}$ of $\Bbbk$ consisting of all non-negative elements of $\Bbbk$. The map
$\psi:\Bbbk_{\geq 0}\to \mathbf{B}$ given by
\begin{displaymath}
\psi(x)=
\begin{cases}
0, & x=0,\\
1, & x\neq 0;
\end{cases}
\end{displaymath}
is a homomorphism of semirings. This equips $\mathbf{B}$ with the natural structure of a $\Bbbk_{\geq 0}$-semimodule
which we denote ${}_{\Bbbk_{\geq 0}}\mathbf{B}$. This semimodule is simple, since $|\mathbf{B}|=2$.

The left regular $\Bbbk_{\geq 0}$-semimodule ${}_{\Bbbk_{\geq 0}}\Bbbk_{\geq 0}$ is minimal because, for any non-zero element
$x\in \Bbbk_{\geq 0}$, we have $x^{-1}\in \Bbbk_{\geq 0}$ and hence $\Bbbk_{\geq 0}x=\Bbbk_{\geq 0}$.

The natural embedding $\Bbbk_{\geq 0}\hookrightarrow \Bbbk$ is a homomorphism of semirings. 
This equips $\Bbbk$ with the natural structure of a $\Bbbk_{\geq 0}$-semimodule
which we denote ${}_{\Bbbk_{\geq 0}}\Bbbk$. As $\Bbbk$ is a group with respect to addition, this semimodule is, in fact,
a module. 

\begin{lemma}\label{lem6.1}
The  $\Bbbk_{\geq 0}$-module ${}_{\Bbbk_{\geq 0}}\Bbbk$ is elementary.
\end{lemma}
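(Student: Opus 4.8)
The plan is to reduce the statement to the computation already carried out in Example~\ref{ex2.26}, of which this is the natural generalization. Since the underlying monoid of $M={}_{\Bbbk_{\geq 0}}\Bbbk$ is the abelian group $(\Bbbk,+,0)$, every congruence on $M$ is given by the cosets of some subgroup $N\subseteq\Bbbk$, by the structure result recalled in Subsection~\ref{s2.5} (see \cite[Subsection~6.2]{GaMa}). Such a congruence is a $\Bbbk_{\geq 0}$-congruence precisely when $N$ is closed under the action of $\Bbbk_{\geq 0}$, i.e.\ when $N$ is a $\Bbbk_{\geq 0}$-invariant subgroup. Thus proving that $M$ is elementary amounts to showing that the only $\Bbbk_{\geq 0}$-invariant subgroups of $\Bbbk$ are $\{0\}$ and $\Bbbk$ itself.

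First I would take a $\Bbbk_{\geq 0}$-invariant subgroup $N$ containing a non-zero element $m$; replacing $m$ by $-m\in N$ if necessary, I may assume $m>0$. The key step is then the identity $\Bbbk_{\geq 0}\,m=\Bbbk_{\geq 0}$: given any $y\in\Bbbk_{\geq 0}$, the scalar $ym^{-1}$ lies in $\Bbbk$ (as $\Bbbk$ is a field) and is non-negative (as both $y$ and $m^{-1}$ are), so $y=(ym^{-1})\,m\in\Bbbk_{\geq 0}\,m$. By invariance $\Bbbk_{\geq 0}=\Bbbk_{\geq 0}\,m\subseteq N$, and since $N$ is a subgroup it also contains $-\Bbbk_{\geq 0}$, whence $N=\Bbbk_{\geq 0}\cup(-\Bbbk_{\geq 0})=\Bbbk$. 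This forces $N\in\{\{0\},\Bbbk\}$ and completes the argument.

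The only point requiring genuine care is the identity $\Bbbk_{\geq 0}\,m=\Bbbk_{\geq 0}$, and this is exactly where the field hypothesis on $\Bbbk$ enters: it guarantees the existence of $m^{-1}$ and, crucially, that $m>0$ forces $m^{-1}>0$, so that scaling by $m^{-1}$ stays inside the semiring $\Bbbk_{\geq 0}$. No essentially new difficulty arises compared with Example~\ref{ex2.26}; the content is simply that the sign-based argument there goes through verbatim for any subfield of $\mathbb{R}$.
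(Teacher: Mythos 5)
Your proposal is correct and follows essentially the same route as the paper: reduce elementarity to the absence of proper $\Bbbk_{\geq 0}$-invariant subgroups via the coset description of congruences on groups, then observe that $\Bbbk_{\geq 0}m$ is all of $\Bbbk_{\geq 0}$ (equivalently, all elements of the same sign as $m$), so any invariant subgroup containing a non-zero element is all of $\Bbbk$. Your explicit verification of $\Bbbk_{\geq 0}\,m=\Bbbk_{\geq 0}$ using $m^{-1}>0$ just spells out the step the paper states without proof.
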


\begin{proof}
We need to show that  ${}_{\Bbbk_{\geq 0}}\Bbbk$ contains no proper $\Bbbk_{\geq 0}$-invariant subgroups.
If $x\in \Bbbk$ is non-zero, then $\Bbbk_{\geq 0}x$ is the set of all elements in $\Bbbk$ of the same sign as
$x$. The additive subgroup of $\Bbbk$ generated by $\Bbbk_{\geq 0}x$ coincides with $\Bbbk$. The claim follows.
\end{proof}

\subsection{Classification}\label{s6.2}

\begin{theorem}\label{thm6.2}
The following holds:

\begin{enumerate}[$($a$)$]
\item\label{thm6.2.1} Up to isomorphism, the  $\Bbbk_{\geq 0}$-semimodule ${}_{\Bbbk_{\geq 0}}\mathbf{B}$ is 
the  only simple $\Bbbk_{\geq 0}$-se\-mi\-mo\-du\-le.
\item\label{thm6.2.2} Up to isomorphism, the  $\Bbbk_{\geq 0}$-semimodule ${}_{\Bbbk_{\geq 0}}\Bbbk_{\geq 0}$ 
is the  only minimal $\Bbbk_{\geq 0}$-se\-mi\-mo\-du\-le which is not simple.
\item\label{thm6.2.3} Up to isomorphism, the  $\Bbbk_{\geq 0}$-module ${}_{\Bbbk_{\geq 0}}\Bbbk$ 
is the only elementary $\Bbbk_{\geq 0}$-se\-mi\-mo\-du\-le which is not simple.
\end{enumerate}
\end{theorem}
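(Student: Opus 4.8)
The plan is to classify everything by exploiting a single structural feature of $\Bbbk_{\geq 0}$: every nonzero scalar $x\in\Bbbk_{\geq 0}$ has a nonnegative inverse $x^{-1}\in\Bbbk_{\geq 0}$, so the action of $x$ on any $M$ is a monoid \emph{automorphism} (inverse given by the action of $x^{-1}$). Several consequences follow immediately and will be used throughout. First, the set $U$ of additively invertible elements and its complement $I=M\setminus U$ are scalar-invariant; $U$ is always a subsemimodule, and when $M$ is proper, $I$ is a subsemimodule. Second, a nonzero $\Bbbk_{\geq 0}$-module (group) is \emph{never} minimal: for $0\neq m$ the ray $\Bbbk_{\geq 0}m=\{cm:c\geq 0\}$ is a subsemimodule not containing $-m$ (otherwise $(c+1)m=0$ forces $m=0$), hence proper. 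Third, for any $0\neq m$ the annihilator $\{c\geq 0: cm=0\}$ is a subsemimodule of the minimal module ${}_{\Bbbk_{\geq 0}}\Bbbk_{\geq 0}$ (minimality recorded in Subsection~\ref{s6.1}), hence $\{0\}$ or everything; the latter would give $m=0$, so no nonzero scalar kills a nonzero element. Putting these together: if $M$ is minimal it is not a group, so the subsemimodule $U$ must be $\{0\}$, and for any $0\neq m$ the nonzero subsemimodule $\Bbbk_{\geq 0}m$ equals $M$; thus a minimal $M$ is cyclic, and by Proposition~\ref{prop2.1} one has $M=\Bbbk_{\geq 0}m\cong \Bbbk_{\geq 0}/{\sim_\phi}$ where $\phi\colon c\mapsto cm$.

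For part~(a), let $M$ be simple, hence minimal, so by the above $U=\{0\}$ and $M=\Bbbk_{\geq 0}m$. The partition into $\{0\}$ and $M\setminus\{0\}$ is a $\Bbbk_{\geq 0}$-congruence: nonzero scalars are automorphisms, and a sum or nonzero scalar multiple of a non-invertible element is non-invertible (all nonzero elements being non-invertible since $U=\{0\}$). As $M$ is elementary and nonzero, this congruence must be the equality, forcing $|M|=2$; the nonzero element is then an idempotent fixed by all positive scalars, so $M\cong{}_{\Bbbk_{\geq 0}}\mathbf{B}$. The converse is the simplicity of ${}_{\Bbbk_{\geq 0}}\mathbf{B}$ noted in Subsection~\ref{s6.1}.

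For part~(b), let $M$ be minimal but not simple; by the first paragraph $M=\Bbbk_{\geq 0}m\cong\Bbbk_{\geq 0}/{\sim_\phi}$. Everything hinges on whether $\phi$ is injective. If it is, then $M\cong{}_{\Bbbk_{\geq 0}}\Bbbk_{\geq 0}$, which is the desired semimodule (minimal by Subsection~\ref{s6.1}, not elementary by Example~\ref{ex2.25}). The hard case, and the main obstacle, is $\phi$ non-injective: then $cm=dm$ for some $0\leq c<d$, and since the annihilator is trivial we must have $c>0$, so rescaling by $c^{-1}$ gives the relation $1\sim_\phi r$ with $r=d/c>1$. I must show this single relation collapses all positive scalars. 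The engine is purely congruence-theoretic: for any $x>0$ and any $\delta\in(0,x]$, scaling $1\sim r$ by $\delta$ gives $\delta\sim r\delta$, and adding the constant $x-\delta\geq 0$ gives $x\sim x+(r-1)\delta$; letting $\delta$ run over $(0,x]$ yields $x\sim y$ for all $y\in(x,rx]$, and iterating from $rx,r^2x,\dots$ yields $x\sim y$ for all $y\geq x$. By symmetry any two positive scalars are equivalent, so $\Bbbk_{\geq 0}/{\sim_\phi}$ has the single positive class together with $\{0\}$ (kept separate by triviality of the annihilator), whence $M\cong{}_{\Bbbk_{\geq 0}}\mathbf{B}$ is simple, contradicting the hypothesis. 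Therefore only the injective case occurs.

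For part~(c), let $M$ be elementary but not simple, hence not minimal. If $M$ were proper, the partition into invertibles $U$ and non-invertibles $I$ would be a $\Bbbk_{\geq 0}$-congruence with two nonempty blocks; elementarity would force both to be singletons, giving $|M|=2$ and $M\cong\mathbf{B}$, which is simple and thus excluded. Hence $M$ is a group. Now $\Bbbk_{\geq 0}$-congruences on a group correspond to $\Bbbk_{\geq 0}$-invariant subgroups, and such subgroups are exactly the $\Bbbk$-subspaces (invariance under positive scalars together with closure under negation upgrades to invariance under all of $\Bbbk$); elementarity says $M$ has no proper nonzero subspace. Picking $0\neq m$, the subgroup generated by the ray $\Bbbk_{\geq 0}m$ is such an invariant subgroup, hence all of $M$, and since no nonzero scalar annihilates $m$ the map $\lambda\mapsto\lambda m$ is an isomorphism ${}_{\Bbbk_{\geq 0}}\Bbbk\cong M$. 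By Lemma~\ref{lem6.1} and Example~\ref{ex2.26} this semimodule is elementary and not minimal, completing the classification.
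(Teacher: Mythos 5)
Your proof is correct, and its skeleton largely coincides with the paper's: minimal semimodules are exhibited as quotients of ${}_{\Bbbk_{\geq 0}}\Bbbk_{\geq 0}$, a single nontrivial relation among positive scalars is shown to collapse all of them by affine manipulations, elementary proper semimodules are killed by the invertible/non-invertible two-block congruence, and elementary modules are upgraded to one-dimensional $\Bbbk$-vector spaces. The genuine differences are three. First, you prove~(a) directly ($U=\{0\}$ plus the congruence $\{0\}$ versus $M\setminus\{0\}$ forces $|M|=2$), whereas the paper deduces~(a) from~(b) and~(c); your version is self-contained and equally short. Second, your collapse engine in~(b) --- scale $1\sim r$ by $\delta\in(0,x]$, translate by $x-\delta$, and iterate along $x,rx,r^2x,\dots$ --- is a clean variant of the paper's computation, which instead fixes targets $c<d$ and solves $\lambda a+p=c$, $\lambda(b+n(b-a))+p=d$ with $\lambda,p>0$; both establish that any congruence on ${}_{\Bbbk_{\geq 0}}\Bbbk_{\geq 0}$ identifying two distinct positive elements merges all positive elements, yours organized around one relation, the paper's around arbitrary target pairs. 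Third, in the module case of~(c) you assert that the $\Bbbk_{\geq 0}$-action on an abelian group extends to a $\Bbbk$-action, so that invariant subgroups are exactly $\Bbbk$-subspaces; this is true but is precisely the point the paper proves in detail, by showing via \eqref{eq6.1} and \eqref{eq6.2} that the subsemiring $T$ of $\mathrm{End}_{\mathbf{Ab}}(M)$ generated by the image of $\Bbbk_{\geq 0}$ and the negation is a field isomorphic to $\Bbbk$ --- the well-definedness of $(x-y)m:=xm-ym$ is exactly what those displayed computations verify, so you should either reproduce them or argue as in Proposition~\ref{prop5.5}. One harmless slip: the set $I$ of non-invertible elements is not a subsemimodule as you state (it omits $0_M$, and $0\cdot m=0_M\notin I$); only $I\cup\{0_M\}$ is, but nothing downstream uses the erroneous form, since your arguments only invoke the two-block congruence, exactly as the paper does. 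Your citations of Example~\ref{ex2.25}, Example~\ref{ex2.26} and Lemma~\ref{lem6.1} for the properties of the two exceptional semimodules match the paper's use of them.
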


This result generalizes naturally to the semiring of non-negative elements in any ordered field.

\subsection{Proof of Theorem~\ref{thm6.2}}\label{s6.3}

Let $M$ be a minimal $\Bbbk_{\geq 0}$-semimodule and $m\in M$ a non-zero element. Then the
assignment $1\mapsto m$ extends uniquely to a homomorphism from the free $\Bbbk_{\geq 0}$-semimodule
${}_{\Bbbk_{\geq 0}}\Bbbk_{\geq 0}$ to $M$ by the universal property of free semimodules. This
homomorphism must be surjective by the minimality of $M$. Therefore $M$ is a quotient of 
${}_{\Bbbk_{\geq 0}}\Bbbk_{\geq 0}$. To prove Claim~\eqref{thm6.2.2}, it remains to show that 
${}_{\Bbbk_{\geq 0}}\Bbbk_{\geq 0}$ has only one proper $\Bbbk_{\geq 0}$-congruence, namely the kernel
of the natural projection from ${}_{\Bbbk_{\geq 0}}\Bbbk_{\geq 0}$ to ${}_{\Bbbk_{\geq 0}}\mathbf{B}$.

Let $\sim$ be a $\Bbbk_{\geq 0}$-congruence on ${}_{\Bbbk_{\geq 0}}\Bbbk_{\geq 0}$. 
If $a\sim 0$, for some non-zero $a\in \Bbbk_{\geq 0}$, then $b\sim 0$ for any $b\in\Bbbk_{\geq 0}$
as $\Bbbk_{\geq 0}a=\Bbbk_{\geq 0}$ and $\Bbbk_{\geq 0}0=0$. 
Hence $\sim$ is the full relation $\Bbbk_{\geq 0}\times \Bbbk_{\geq 0}$ in this case.

Assume now that $a\sim b$, for two non-zero elements $a,b\in \Bbbk_{\geq 0}$ such that $a<b$. 
Let $c,d\in \Bbbk_{\geq 0}$ be two non-zero elements such that $c<d$. Choose a positive integer
$n$ such that $(1+n)(b-a)c>a(d-c)$. Such $n$ exists as both $(b-a)c$ and $a(d-c)$ are positive real
numbers. Then $a\sim (b+n(b-a))$ as $\sim$ is a congruence on ${}_{\Bbbk_{\geq 0}}\Bbbk_{\geq 0}$.
For the positive real numbers
\begin{displaymath}
\lambda=\frac{d-c}{(1+n)(b-a)}>0\quad\text{ and }\quad
p=\frac{(1+n)(b-a)c-a(d-c)}{(1+n)(b-a)}>0,
\end{displaymath}
we have $\lambda a+p=c$ and $\lambda(b+n(b-a))+p=d$. As $\sim$ is a $\Bbbk_{\geq 0}$-congruence,
this implies that  $c\sim d$ and shows that all positive elements of $\Bbbk_{\geq 0}$ belong to the same
$\sim$-equivalence class. Therefore $\sim$ is again either the full relation or 
coincides with the kernel of ${}_{\Bbbk_{\geq 0}}\Bbbk_{\geq 0}\tto{}_{\Bbbk_{\geq 0}}\mathbf{B}$.
This proves Claim~\eqref{thm6.2.2}.

Let us now prove Claim~\eqref{thm6.2.3}. Let $M$ be an elementary $\Bbbk_{\geq 0}$-semimodule.
Assume first that $M$ is not a module. Note that each non-zero element in $\Bbbk_{\geq 0}$
has a multiplicative inverse. Therefore every non-zero element in $\Bbbk_{\geq 0}$ acts
on $M$ by an automorphism, in particular, it preserves the set of all non-invertible elements in $M$.
Hence the equivalence relation on $M$ with two equivalence classes being the sets of all
invertible and all non-invertible elements is a $\Bbbk_{\geq 0}$-congruence. Consequently,
the equivalence classes must be singletons as $M$ is elementary. Hence 
$M\cong{}_{\Bbbk_{\geq 0}}\mathbf{B}$.

Now assume that $M$ is a $\Bbbk_{\geq 0}$-module. As usual, we denote by $\mathbf{Ab}$ the category of 
abelian groups. Consider the subsemiring $T$ of $\mathrm{End}_{\mathbf{Ab}}(M)$ generated by
the image of $\Bbbk_{\geq 0}$ under the module action and the sign change automorphism $(-1\cdot_{-})$.
Note that the latter automorphism commutes with all automorphisms of $M$. We claim that the difference 
between $T$ and the image of $\Bbbk_{\geq 0}$ are exactly all automorphisms of $M$ of the form
$x\circ (-1\cdot_{-})$, where $x\in \Bbbk_{\geq 0}$. For any $x\in \Bbbk_{\geq 0}$, we have the equality
\begin{equation}\label{eq6.2}
x\circ (\mathrm{id}_M)+ x\circ (-1\cdot_{-})=x\circ (\mathrm{id}_M+(-1)\cdot_{-})=x\circ (0\cdot{}_-)=0 
\end{equation}
of endomorphisms of $M$. Therefore the automorphisms of the form $x\circ (-1\cdot_{-})$ are exactly 
the additive  inverses to the automorphisms in the image of $\Bbbk_{\geq 0}$. Now we just need to show that
all these endomorphisms are closed under addition. By distributivity, we have
\begin{equation}\label{eq6.1}
x\circ (-1\cdot_{-})+y\circ (-1\cdot_{-})= (x+y)\circ (-1\cdot_{-}),
\end{equation}
for any $x,y\in \Bbbk_{\geq 0}$.
Let now $0<x<y$ be elements in $\Bbbk_{\geq 0}$. By \eqref{eq6.1}, we have 
\begin{displaymath}
y\circ (-1\cdot_{-})=x\circ (-1\cdot_{-})+(y-x)\circ (-1\cdot_{-})
\end{displaymath}
and hence, using \eqref{eq6.2}, we have 
\begin{displaymath}
\begin{array}{rcl} 
x\circ (\mathrm{id}_M)+y\circ (-1\cdot_{-})&=&x\circ (\mathrm{id}_M)+x\circ (-1\cdot_{-})+(y-x)\circ (-1\cdot_{-})\\
&=&(y-x)\circ (-1\cdot_{-}).
\end{array}
\end{displaymath}
Similarly one shows that 
\begin{displaymath}
y\circ (\mathrm{id}_M)+x\circ (-1\cdot_{-})=
(y-x)\circ (\mathrm{id}_M).
\end{displaymath}
This means that $T\cong \Bbbk$ is a field and $M$ is a vector space 
over this field. If $\mathrm{dim}_{\Bbbk}(M)=1$, then $M\cong {}_{\Bbbk_{\geq 0}}\Bbbk$.
If $\mathrm{dim}_{\Bbbk}(M)>1$, then any one-dimensional subspace of $M$ is a 
$\Bbbk_{\geq 0}$-submodule. This contradicts the fact that $M$ is elementary
and completes the proof of  Claim~\eqref{thm6.2.3}.

Claim~\eqref{thm6.2.1} follows from Claims~\eqref{thm6.2.2} and \eqref{thm6.2.3}.

\section{Some general results}\label{s4}

\subsection{Schur's lemma}\label{s4.1}

The following statement, cf. \cite[Lemma~2.6]{IRS}, 
is an analogue of Schur's lemma for representations of semirings.

\begin{lemma}\label{lem4.1}
Let $R$ be a semiring, $M,N\in R\text{-}\mathrm{sMod}$ and $\alpha\in\mathrm{Hom}_R(M,N)$
be a non-zero homomorphism. Then the following statements hold.
\begin{enumerate}[$($a$)$]
\item\label{lem4.1.1} If $M$ is elementary, then $\alpha$ is injective. 
\item\label{lem4.1.2} If $N$ is minimal, then $\alpha$ is surjective. 
\end{enumerate}
\end{lemma}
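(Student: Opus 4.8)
The plan is to prove the two parts separately, in each case producing a canonical $R$-congruence or subsemimodule out of the data of $\alpha$ and then invoking the relevant extremality hypothesis to force the desired conclusion.

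For part \eqref{lem4.1.1}, assume $M$ is elementary. The natural object to examine is the kernel congruence $\sim_\alpha$ on $M$, defined by $m\sim_\alpha k$ iff $\alpha(m)=\alpha(k)$. By Proposition~\ref{prop2.1} this is an $R$-congruence on $M$. Since $M$ is elementary, $\sim_\alpha$ must be either the equality relation $=_M$ or the full relation $M\times M$. If $\sim_\alpha$ were the full relation, then all elements of $M$ would have the same image under $\alpha$; in particular every $m$ satisfies $\alpha(m)=\alpha(0_M)=0_N$, so $\alpha$ would be the zero homomorphism, contradicting the hypothesis that $\alpha$ is non-zero. Hence $\sim_\alpha$ is equality, which is precisely the statement that $\alpha$ is injective.

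For part \eqref{lem4.1.2}, assume $N$ is minimal. Here the natural object is the image $\mathrm{Im}(\alpha)$, which by Proposition~\ref{prop2.1} is a subsemimodule of $N$. Minimality of $N$ forces $\mathrm{Im}(\alpha)$ to be either $\{0_N\}$ or all of $N$. Since $\alpha$ is non-zero, there is some $m\in M$ with $\alpha(m)\neq 0_N$, so $\mathrm{Im}(\alpha)\neq\{0_N\}$; therefore $\mathrm{Im}(\alpha)=N$, i.e. $\alpha$ is surjective.

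I do not anticipate a genuine obstacle here: both parts are direct translations of the definitions of elementary and minimal into the language of the kernel congruence and the image, exactly paralleling the classical proof of Schur's lemma. The only point requiring a moment's care is checking that ``non-zero homomorphism'' rules out the degenerate alternative in each case — that the full congruence forces $\alpha$ to be zero in \eqref{lem4.1.1}, and that the zero image forces $\alpha$ to be zero in \eqref{lem4.1.2} — but both follow immediately from $\alpha(0_M)=0_N$ and the monoid-homomorphism property of $\alpha$. No subtlety specific to semirings (as opposed to rings) intrudes, since we never need to pass between submodules and quotients.
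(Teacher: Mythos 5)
Your proof is correct and follows exactly the paper's own argument: the paper's one-line proof likewise observes that the kernel of $\alpha$ is an $R$-congruence on $M$ (Proposition~\ref{prop2.1}) and the image of $\alpha$ is a subsemimodule of $N$, and then appeals to the definitions of elementary and minimal. You have simply spelled out the dichotomy in each case and the check that ``non-zero'' excludes the degenerate alternative, which is precisely what the paper leaves implicit.
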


\begin{proof}
As the kernel of $\alpha$ is an $R$-congruence on $M$ and the image of $\alpha$
is a subsemimodule of $N$, the result follows directly from the definitions. 
\end{proof}

Lemma~\ref{lem4.1} has the following immediate consequences.

\begin{corollary}\label{cor4.2}
Every non-zero endomorphism of a simple $R$-semimodule is an isomorphism.
\end{corollary}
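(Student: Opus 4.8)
The plan is to deduce Corollary~\ref{cor4.2} directly from the two parts of Lemma~\ref{lem4.1}. Let $M$ be a simple $R$-semimodule and let $\alpha \in \mathrm{End}_R(M)$ be a non-zero endomorphism, so that we may view $\alpha$ as a non-zero element of $\mathrm{Hom}_R(M,M)$ with both source and target equal to $M$. Since $M$ is simple, it is by definition both elementary and minimal.

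First I would apply Lemma~\ref{lem4.1}\eqref{lem4.1.1}: because $M$ is elementary, the non-zero homomorphism $\alpha$ is injective. Next I would apply Lemma~\ref{lem4.1}\eqref{lem4.1.2} with $N = M$: because $M$ is minimal, the same non-zero homomorphism $\alpha$ is surjective. A map that is simultaneously injective and surjective is a bijection, so $\alpha$ is a bijective $R$-semimodule homomorphism.

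The only remaining point is to confirm that a bijective homomorphism of $R$-semimodules is an isomorphism in $R\text{-}\mathrm{sMod}$, i.e.\ that the set-theoretic inverse $\alpha^{-1}$ is again an $R$-semimodule homomorphism. This is routine: since $\alpha$ is a bijective monoid homomorphism its inverse is a monoid homomorphism, and $R$-equivariance of $\alpha^{-1}$ follows by applying $\alpha^{-1}$ to the identity $\alpha(r(m)) = r(\alpha(m))$ after substituting $m = \alpha^{-1}(n)$. I do not expect any genuine obstacle here, as the whole statement is an immediate corollary; the only thing to be careful about is to check that both halves of Lemma~\ref{lem4.1} are legitimately applicable to a single endomorphism, which holds precisely because simplicity packages together the two hypotheses (elementary and minimal) needed by parts \eqref{lem4.1.1} and \eqref{lem4.1.2} respectively.
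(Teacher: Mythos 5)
Your proof is correct and is exactly the argument the paper intends: Corollary~\ref{cor4.2} is stated there as an immediate consequence of Lemma~\ref{lem4.1}, obtained by combining part \eqref{lem4.1.1} (elementary gives injectivity) with part \eqref{lem4.1.2} (minimal gives surjectivity), just as you do. Your extra verification that the set-theoretic inverse of a bijective $R$-semimodule homomorphism is again a homomorphism is a sensible check (it is the one point where semimodules could conceivably differ from modules) and it goes through exactly as you describe.
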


\begin{corollary}\label{cor4.3}
Every non-zero endomorphism of a finite elementary or a finite minimal $R$-semimodule is an isomorphism.
\end{corollary}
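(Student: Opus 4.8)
The plan is to leverage Corollary~\ref{cor4.2} together with Lemma~\ref{lem4.1} by showing that a finite elementary (respectively, finite minimal) semimodule is automatically simple, at which point Corollary~\ref{cor4.2} applies directly. So the real content is to upgrade ``finite elementary'' and ``finite minimal'' to ``simple''. First I would recall that for the two halves of Lemma~\ref{lem4.1} to both apply to a nonzero endomorphism $\alpha$, I want $M$ to be \emph{both} elementary and minimal; the finiteness hypothesis is precisely what I expect to bridge the gap between the one property I am given and the other that I need.

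Let me handle the elementary case first. Suppose $M$ is finite and elementary, and let $\alpha$ be a nonzero endomorphism of $M$. By Lemma~\ref{lem4.1}\eqref{lem4.1.1}, $\alpha$ is injective. Since $M$ is a \emph{finite} set and $\alpha:M\to M$ is an injective self-map of a finite set, $\alpha$ must also be surjective, hence bijective; as it is already an $R$-semimodule homomorphism, it is an isomorphism. Thus I never even need minimality here: injectivity plus finiteness gives bijectivity for free. The symmetric argument covers the minimal case: if $M$ is finite and minimal, Lemma~\ref{lem4.1}\eqref{lem4.1.2} makes $\alpha$ surjective, and a surjective self-map of a finite set is injective, again yielding a bijective homomorphism, i.e.\ an isomorphism.

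The key steps, in order, are: (1) apply the appropriate half of Lemma~\ref{lem4.1} to get one-sided injectivity or surjectivity of $\alpha$; (2) invoke the pigeonhole principle for self-maps of a finite set to promote this to a bijection; (3) observe that a bijective $R$-semimodule homomorphism is an isomorphism of semimodules (its set-theoretic inverse is automatically a homomorphism, since it respects $+$ and the $R$-action on the nose). The only point that deserves a word of care is step~(3): I should confirm that the inverse of a bijective homomorphism in $R\text{-}\mathrm{sMod}$ is again a homomorphism, but this is the usual routine verification and follows because the defining equations $\alpha(m+_M n)=\alpha(m)+_M\alpha(n)$ and $\alpha(r(m))=r(\alpha(m))$ can be pushed through $\alpha^{-1}$.

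The main obstacle, such as it is, lies in making sure finiteness is genuinely used and that I do not accidentally assume the semimodule is both minimal and elementary. The elegance here is that each of the two hypotheses (finite-plus-elementary, or finite-plus-minimal) independently suffices, precisely because finiteness converts a one-sided bijection into a two-sided one. In particular, unlike Corollary~\ref{cor4.2}, which needs simplicity (both properties) to run both halves of Schur's lemma simultaneously, Corollary~\ref{cor4.3} trades the missing half of the simplicity hypothesis for the finite cardinality of $M$. I would therefore structure the write-up around this single observation, treating the two cases in parallel and remarking that the pigeonhole step is what replaces the second of the two ``smallest possible'' conditions.
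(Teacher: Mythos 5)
Your proof is correct and is exactly the argument the paper intends: the paper states Corollary~\ref{cor4.3} as an immediate consequence of Lemma~\ref{lem4.1}, with the pigeonhole principle for self-maps of a finite set promoting the one-sided injectivity (elementary case) or surjectivity (minimal case) to bijectivity, and a bijective semimodule homomorphism being an isomorphism. One caveat: the plan announced in your opening paragraph---upgrading ``finite elementary'' or ``finite minimal'' to ``simple'' so that Corollary~\ref{cor4.2} applies---would fail, since the paper itself exhibits a finite minimal semimodule that is not elementary ($M_4$ in Section~\ref{s7}) and a finite elementary semimodule that is not minimal ($K_1$ in Section~\ref{s9}); fortunately you explicitly abandon this plan mid-proof, and the argument you actually execute never relies on it.
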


\subsection{Extreme modules}\label{s4.2}

\begin{proposition}\label{prop4.4}
Let $R$ be a semiring.
If $M$ is an $R$-module, then $M$ is minimal if and only if $M$ is simple.
\end{proposition}

\begin{proof}
We only need to show that minimal implies elementary. If $M$ is not elementary,
then we have a non-trivial $R$-congruence on $M$. By \cite[Subsection~6.2]{GaMa},
this is given by cosets with respect to a proper subgroup, say $N$. As $N$ contains $0$
and the latter element is fixed by the action of any $r\in R$, it follows that $N$
is $R$-invariant. Therefore $M$ is not minimal. The claim follows.
\end{proof}

\subsection{Generalities on extreme proper semimodules}\label{s4.25}

\begin{lemma}\label{lem4.45}
Let $R$ be a semiring and $M$ a minimal proper $R$-semimodule. Then $0$
is the only invertible element of $M$.
\end{lemma}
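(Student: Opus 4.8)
The plan is to single out the set $U$ of additively invertible elements of the underlying commutative monoid $M$ (its group of units), show that $U$ is a subsemimodule, and then use minimality together with properness to force $U=\{0_M\}$.

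First I would recall the purely monoid-theoretic fact that $U$ is a submonoid of $M$: it contains $0_M$, and if $m+m'=0_M$ and $n+n'=0_M$, then $(m+n)+(m'+n')=0_M$, so $U$ is closed under addition and under taking additive inverses. The key step is to verify that $U$ is stable under the action of $R$. Fix $r\in R$ and an invertible element $m\in U$, with additive inverse $m'$ so that $m+m'=0_M$. Applying the semimodule axioms, I compute $r(m)+r(m')=r(m+m')=r(0_M)=0_M$, so $r(m)$ is again additively invertible, i.e.\ $r(m)\in U$. Hence $U$ is a subsemimodule of $M$.

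Since $M$ is minimal, $U$ must equal either $\{0_M\}$ or $M$. If $U=M$, then every element of $M$ is additively invertible, so $M$ is an abelian group and therefore a module; this contradicts the hypothesis that $M$ is proper. Consequently $U=\{0_M\}$, which is precisely the statement that $0_M$ is the only invertible element of $M$.

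The argument is short, and I do not anticipate a genuine obstacle. The only place where the semimodule structure (as opposed to the bare monoid structure) is actually used is the verification that $U$ is $R$-invariant, and this reduces entirely to the two axioms $r(m+n)=r(m)+r(n)$ and $r(0_M)=0_M$; properness is invoked only at the last step to rule out the case $U=M$.
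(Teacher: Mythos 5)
Your proposal is correct and follows exactly the paper's own argument: the set of additively invertible elements is a subsemimodule (since each $r\in R$ acts as a monoid endomorphism and thus preserves invertibility), minimality forces it to be $\{0_M\}$ or $M$, and properness rules out the latter. You merely spell out the submonoid and $R$-invariance verifications that the paper leaves implicit.
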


\begin{proof}
Every $r\in R$ acts on $M$ as an endomorphism, in particular, it maps invertible
elements to invertible elements. Therefore the set of all invertible elements is,
naturally, a subsemimodule of $M$. By the minimality of $M$ we hence have that
the set of invertible elements coincides either with $0$ or with $M$. As $M$
is proper, the second alternative is not possible.
\end{proof}

\begin{lemma}\label{lem4.46}
Let $R$ be a semiring and $M$ an elementary proper $R$-semimodule. Then $0$
is the only invertible element of $M$.
\end{lemma}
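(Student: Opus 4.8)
The plan is to mimic the proof of Lemma~\ref{lem4.45}, but since elementariness is a statement about congruences rather than subsemimodules, I would package the invertible elements into a congruence instead of a subsemimodule. Let $U$ denote the set of all invertible elements of $M$. As observed in the proof of Lemma~\ref{lem4.45}, every $r\in R$ maps invertible elements to invertible elements, so $U$ is closed under the $R$-action; it is clearly also closed under addition and contains $0$, and since the additive inverse of an invertible element is again invertible, $(U,+,0)$ is in fact an abelian group.

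The key step is to turn $U$ into an $R$-congruence by taking its cosets. I would define $m\sim n$ if and only if $n=m+u$ for some $u\in U$. Because $U$ is a group, this relation is reflexive (take $u=0$), symmetric (replace $u$ by its additive inverse) and transitive (add the two translating elements), so it is a genuine equivalence relation. I would then check that it is an $R$-congruence: if $n=m+u$ with $u\in U$, then $(m+k)+u=n+k$ for any $k\in M$, and $r(m)+r(u)=r(n)$ with $r(u)\in U$ for any $r\in R$, which verifies both congruence axioms.

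With the congruence in hand, I would invoke the hypothesis that $M$ is elementary, so $\sim$ is either the equality relation $=_M$ or the full relation $M\times M$. If $\sim$ is the full relation, then $m\sim 0$ for every $m\in M$, which means $0=m+u$ for some $u\in U$, forcing $m$ to be the additive inverse of $u$ and hence $m\in U$; thus $M=U$ is an abelian group, contradicting the assumption that $M$ is proper. Therefore $\sim$ must be the equality relation. Finally, for any $u\in U$ we have $u\sim 0$ (since $0=u+(-u)$ with $-u\in U$), so equality of $\sim$ gives $u=0$, whence $U=\{0\}$, as desired.

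I expect the only subtle point to be the verification that the coset relation is a bona fide $R$-congruence and, correspondingly, that the full-relation alternative genuinely collapses $M$ onto the group $U$ of invertible elements; the remainder is a direct translation of the subsemimodule argument of Lemma~\ref{lem4.45} into the language of congruences.
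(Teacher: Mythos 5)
Your proposal is correct and follows essentially the same route as the paper: the paper's proof of Lemma~\ref{lem4.46} defines exactly this coset relation ($m\sim n$ iff $m=n+x$ for some invertible $x$), verifies it is an $R$-congruence using the fact that every $r\in R$ preserves invertibility, and then concludes from properness (ruling out the full relation) and elementariness that $\sim$ is the equality relation. Your write-up is merely slightly more explicit in checking the equivalence-relation axioms and in spelling out why the full relation would force $M$ to be a group.
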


The following proof is similar to the proof of \cite[Proposition~1.2]{Il}.

\begin{proof}
Define an equivalence relation $\sim$ on $M$
as follows: $m\sim n$ if and only if $m=n+x$, for some invertible $x\in M$. 
We claim that this equivalence relation is an $R$-con\-gru\-ence. Indeed,
if $m\sim n$, then $m=n+x$, for some invertible $x$. 
Consequently, for any $k\in M$, we have $m+k=n+k+x$, that is $m+k\sim n+k$. 
Therefore $\sim$ is a congruence. Every $r\in R$ acts on $M$ as an endomorphism, 
in particular, it maps invertible elements to invertible elements. Hence
$m\sim n$ implies $m=n+x$. The latter, in turn, implies $r(m)=r(n)+r(x)$ where
$r(x)$ is invertible, that is, $r(m)\sim r(n)$. This
proves that $\sim$ is an $R$-congruence.

As $M$ is proper, $\sim$ does not coincide with the full relation on $M$.
Therefore  $\sim$ must be the equality relation due to the fact that $M$ is elementary. The claim follows.
\end{proof}

\subsection{The underlying monoid of a minimal proper semimodule}\label{s4.27}

\begin{proposition}\label{propmonoid}
Let $(M,+,0)$ be a non-zero finitely generated commutative monoid with the following properties.
\begin{enumerate}[$($a$)$]
\item\label{propmonoid.1} $0$ is the only invertible element of $M$.
\item\label{propmonoid.2} $M=kM$, for any $k\in\{2,3,\dots\}$.
\end{enumerate}
Then $M$ contains a non-zero idempotent.
\end{proposition}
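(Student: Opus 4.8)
The plan is to fix a finite generating set $g_1,\dots,g_n$ of $M$, discard any generators equal to $0$ (at least one remains since $M\neq\{0\}$) and relabel so that $g_1\neq 0$, and then work through the canonical surjection $\pi\colon\mathbb{Z}_{\geq 0}^{n}\to M$ sending $e_i\mapsto g_i$. The single most useful consequence of hypothesis~\eqref{propmonoid.1} is: if $t\in\mathbb{Z}_{\geq 0}^{n}$ is nonzero then $\pi(t)\neq 0$. Indeed, $\pi(t)$ is a sum, with multiplicity, of nonzero generators, and in a commutative monoid whose only invertible element is $0$ a sum vanishes only if every summand is invertible, hence $0$; since each generator is nonzero this forces $t=0$. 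In particular the fibre $\pi^{-1}(0)=\{0\}$.

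Next I would exploit \eqref{propmonoid.2} to manufacture representatives of unbounded degree. Writing $g_i=2h_i$ with $h_i\neq 0$ and expanding $h_i=\sum_j a_{ij}g_j$ gives relations $g_i=\sum_j 2a_{ij}\,g_j$, each right-hand side having total degree $\geq 2$. Lifting to $\mathbb{Z}_{\geq 0}^{n}$ yields a degree-increasing rewriting $e_i\leadsto v_i$ with $\deg v_i\geq 2>1=\deg e_i$. Applying it repeatedly to $e_1$ produces vectors $u_0=e_1,u_1,u_2,\dots$ with $\pi(u_k)=g_1$ for all $k$ and $\deg u_k\to\infty$. By Dickson's lemma this infinite sequence has two terms with $u_k\leq u_l$ componentwise for some $k<l$; then $t:=u_l-u_k$ is nonzero, so $h:=\pi(t)\neq 0$ by the previous paragraph, and applying $\pi$ to $u_l=u_k+t$ gives the key relation $g_1=g_1+h$ with $h\neq 0$. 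Consequently $g_1=g_1+kh$ and hence $kh\leq g_1$ for every $k\geq 0$.

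From here I would locate the idempotent inside a cyclic submonoid. If $\langle h\rangle=\{0,h,2h,\dots\}$ is finite, then it is a finite commutative semigroup and so contains an idempotent $p$; this $p$ cannot be $0$, for $p=mh$ with $m\geq 1$ together with $p=0$ would make $h$ invertible, hence $0$, by \eqref{propmonoid.1}. Thus $\langle h\rangle$ finite already finishes the proof. Running the same construction with $h$ in place of $g_1$ then produces either an idempotent directly (if the gap returned by Dickson ever equals the current element $x$, for then $x=x+x$) or a strictly descending divisibility chain $g_1>h>h'>\cdots$.

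The main obstacle is therefore to exclude an infinite strictly descending divisibility chain — equivalently, to show that the down-set $\{x\in M:x\leq g_1\}$ has no infinite ascending chain. This is exactly where \eqref{propmonoid.2} must enter essentially: the monoid $\langle a,b\mid a+b=a\rangle$ has only $0$ invertible yet admits such an infinite chain, and is ruled out only by failing \eqref{propmonoid.2}. To close this gap I would invoke the structure theory of finitely generated commutative semigroups. The maximal semilattice quotient $Y$ of $M$ is a finitely generated semilattice, hence finite, and $M$ is the disjoint union of its archimedean components; the component $K$ lying over the absorbing element of $Y$ (the sum of all elements of $Y$) is an ideal of $M$, is again finitely generated, and, because $Y$ is idempotent, inherits $K=2K$. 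It then suffices to prove that a finitely generated commutative archimedean semigroup with $K=2K$ contains an idempotent: an idempotent-free such $K$ carries a nonzero additive homomorphism $\ell\colon K\to\mathbb{Z}_{\geq 0}$, whose image is a nonzero submonoid of $\mathbb{Z}_{\geq 0}$, and its least positive value $\ell(x)$ satisfies $\ell(x)=2\ell(y)$ for $x=2y$, a contradiction. This idempotent of $K$ is the sought nonzero idempotent of $M$.
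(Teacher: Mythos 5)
Your paragraphs 1--2 are correct and run parallel to the paper's setup (surjection from a free commutative monoid with trivial fibre over $0$), but the output of your Dickson step is strictly weaker than what is needed, and the machinery you then invoke to compensate has real holes. Dickson's lemma only gives single domination $u_k\leq u_l$, hence only the absorption relation $g_1=g_1+h$; absorption alone does not yield an idempotent, as the monoid $\langle a,b\,:\,a+b=a\rangle$ (with only $0$ invertible, $a=a+b$, and no non-zero idempotent) shows, so everything must be extracted from hypothesis~\eqref{propmonoid.2} afterwards. The paper avoids this bottleneck by using $M=kM$ for \emph{all} $k$ at the lifting stage: it chooses, for each $k$, a lift $a_k\in k\mathbb{Z}_{\geq 0}^n$ of $g_1$ (so every non-zero coordinate of $a_k$ is at least $k$), pigeonholes on supports to find $a_k,a_{k'}$ with the same support and $a_{k'}\geq 2a_k$, and then the one-line subtraction trick gives $\varphi(a_{k'}-a_k)=2\varphi(a_{k'}-a_k)$, a non-zero idempotent. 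Note that \emph{double} domination is essential there, and your doubling rewrite cannot be upgraded to produce it: a coordinate of $u_k$ may stay equal to $1$ for all $k$, so $2u_k\leq u_l$ need never occur along your sequence.

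Paragraph 3 is inconclusive on its own terms (you never show the divisibility chain is strict, nor that it terminates), as you concede, so the entire weight falls on paragraph 4, which contains two unproven steps. First, the claim that the archimedean component $K$ over the absorbing element of $Y$ ``is again finitely generated'' is false for general finitely generated commutative monoids: the component $\mathbb{Z}_{>0}^2$ of $\mathbb{Z}_{\geq 0}^2$ is not finitely generated as a semigroup (what is true and easy, via Dickson applied to the up-set $\pi^{-1}(K)$, is that $K$ is finitely generated as an \emph{ideal}, $K=F+M$ with $F$ finite, which is weaker than what your lemma needs); whether hypothesis~\eqref{propmonoid.2} restores finite generation of $K$ is not addressed and is essentially as hard as the proposition itself. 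Second, the key lemma --- that an idempotent-free finitely generated commutative archimedean semigroup admits a non-zero additive homomorphism to $\mathbb{Z}_{\geq 0}$ --- is invoked with only a gesture at ``structure theory''; it is a genuinely nontrivial assertion (one would need something like Tamura's theorem that such semigroups are cancellative, plus a grading argument on the cancellative hull), and since this is exactly where all the difficulty of the statement now resides, leaving it uncited and unproved is a genuine gap. Incidentally, had you gotten your absorption relation to live inside an archimedean component, heavy machinery would be unnecessary: from $p=p+u$ with $nu=p+q$ one gets $p=2p+q$, and then $e:=p+q$ satisfies $2e=e$; this suggests the homomorphism argument is a detour even if it can be repaired.
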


\begin{proof}
Let $\mathbb{N}_{0}$ denote the additive monoid of all non-negative integers. As $M$ is finitely
generated, for some positive integer $n$, there is a surjective epimorphism
$\varphi:\mathbb{N}_{0}^n\tto M$. Due to condition~\eqref{propmonoid.1},
without loss of generality we may assume that the preimage of $0$
under $\varphi$ is a singleton. In particular, the images under $\varphi$ of all non-zero elements
are non-zero. Note that $\varphi$ cannot be an isomorphism as $\mathbb{N}_{0}^n$
does not satisfy condition~\eqref{propmonoid.2}.

Consider the element $a:=(1,0,\dots,0)\in\mathbb{N}_{0}^n$.
By \eqref{propmonoid.2}, for any $k\in\{2,3,\dots\}$, there is a non-zero element $a_k\in k\mathbb{N}_{0}^n$
such that $\varphi(a)=\varphi(a_k)$. If one of these $a_k$ has the form
$(x_1,x_2,\dots,x_n)$ with $x_1\neq 0$, then $\varphi((x_1-1,x_2,\dots,x_n))$ is a non-zero idempotent in 
$M$ since in this case we have  $(x_1-2,x_2,\dots,x_n)\in \mathbb{N}_{0}^n$ as $x_1\geq 2$ and therefore
\begin{displaymath}
\varphi((1,0,\dots,0))=\varphi((x_1,x_2,\dots,x_n)) 
\end{displaymath}
implies
\begin{displaymath}
\varphi((1,0,\dots,0)+(x_1-2,x_2,\dots,x_n))=\varphi((x_1,x_2,\dots,x_n)+(x_1-2,x_2,\dots,x_n)), 
\end{displaymath}
that is 
\begin{displaymath}
\varphi((x_1-1,x_2,\dots,x_n))=\varphi((2x_1-2,2x_2,\dots,2x_n))=2\varphi((x_1-1,x_2,\dots,x_n)). 
\end{displaymath}

It remains to consider the case when all $a_k$ have the form $(0,x_2,\dots,x_n)$. As $n$ is finite, there
exists $I\subset\{2,3,\dots,n\}$ such that, for infinitely many values of $k$, the non-zero entries of 
$a_k=(x_1,x_2,\dots,x_n)$  are exactly those indexed by elements in $I$.
Fix one such value of $k$ and let $a_k=(y_1,y_2,\dots,y_n)$.
Let $k'$ be another one of such values which, additionally, satisfies the condition that $k'$
is strictly greater than each of the $2y_i$. Then $a_{k'}-2a_k\in \mathbb{N}_{0}^n$ and just like in the previous
paragraph we obtain that $\varphi(a_{k'})=\varphi(a_k)$ implies that 
$\varphi(a_{k'}+a_{k'}-2a_k)=\varphi(a_k+a_{k'}-2a_k)$, that is, 
$2\varphi(a_{k'}-a_k)=\varphi(a_{k'}-a_k)$. Therefore
$\varphi(a_{k'}-a_k)$ is a non-zero idempotent, as claimed.
\end{proof}

\begin{corollary}\label{corminprosem}
Let $R$ be a finitely generated semiring and $M$ a minimal proper $R$-semimodule. 
Then every element of $M$ is an idempotent. 
\end{corollary}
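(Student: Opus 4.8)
The plan is to manufacture a single non-zero idempotent in $M$ by invoking Proposition~\ref{propmonoid}, and then to spread idempotency across all of $M$ using minimality. The whole argument thus splits into checking the hypotheses of Proposition~\ref{propmonoid} and making one structural observation about where idempotents live.

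First I would record the features of $M$ that feed Proposition~\ref{propmonoid}. Since $M$ is minimal, for any non-zero $m\in M$ the set $Rm=\{r(m):r\in R\}$ is a non-zero subsemimodule (it is a submonoid because $r(m)+s(m)=(r+s)(m)$, and it is $R$-stable because $s(r(m))=(sr)(m)$), so minimality forces $Rm=M$; thus $M$ is cyclic. Because $R$ is finitely generated and $r\mapsto r(m)$ is a surjective homomorphism of additive monoids from $R$ onto $M$, a finite generating set of $R$ maps to a finite monoid generating set of $M$, so $M$ is a finitely generated commutative monoid. Condition~\eqref{propmonoid.1} of Proposition~\ref{propmonoid} is then exactly Lemma~\ref{lem4.45}. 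For condition~\eqref{propmonoid.2} I would observe that, for each $k\geq 2$, the subset $kM$ is again a subsemimodule (since $ka+kb=k(a+b)$ and $r(ka)=k\,r(a)$); it cannot equal $\{0\}$, for $km=0$ would make $m$ invertible with additive inverse $(k-1)m$, contradicting Lemma~\ref{lem4.45}; hence $kM=M$ by minimality.

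Next I would isolate the observation that lets minimality do the final work: the set $E$ of idempotents of $M$ (those $e$ with $e+e=e$, which includes $0$) is itself a subsemimodule. Indeed $E$ is closed under addition, since in a commutative monoid $(e+f)+(e+f)=(e+e)+(f+f)=e+f$, and it is $R$-stable, since $r(e)+r(e)=r(e+e)=r(e)$. By minimality, $E$ is therefore either $\{0\}$ or all of $M$.

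With these pieces in place the conclusion is immediate: Proposition~\ref{propmonoid} applies to $M$ and produces a non-zero idempotent, so $E\neq\{0\}$, whence $E=M$ and every element of $M$ is idempotent. The one genuinely delicate point, and the step I would treat most carefully rather than as routine, is the passage from ``$R$ finitely generated'' to ``$M$ finitely generated as a commutative monoid'': it hinges on reading off cyclicity from minimality and on the additive epimorphism $R\twoheadrightarrow M$, which is what transfers finite generation of the semiring into finite generation of the underlying monoid. Everything else amounts to the two short hypothesis checks for Proposition~\ref{propmonoid} and the elementary closure of idempotents under the semimodule operations.
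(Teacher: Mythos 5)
Your proof is correct and follows essentially the same route as the paper's: you verify the two hypotheses of Proposition~\ref{propmonoid} via Lemma~\ref{lem4.45} and minimality applied to the subsemimodules $kM$, obtain a non-zero idempotent, and conclude by noting that the idempotents form a non-zero $R$-subsemimodule, hence all of $M$. The only difference is that you spell out the steps the paper leaves implicit---cyclicity $Rm=M$ yielding finite generation of the monoid $M$ through the additive epimorphism $r\mapsto r(m)$, and the check that $kM\neq\{0\}$---both of which are correct under the paper's intended reading of ``finitely generated'' as additively (i.e., as a $\mathbb{Z}_{\geq 0}$-semimodule) finitely generated.
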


\begin{proof}
As $R$ is finitely generated and $M$ is minimal, it follows that $M$ is finitely generated as a monoid.
By Lemma~\ref{lem4.45}, $M$ satisfies the condition in Proposition~\ref{propmonoid}\eqref{propmonoid.1}.
As $kM$ is a non-zero $R$-invariant submonoid of $M$, for every $k\in\{2,3,\dots\}$, the minimality
of $M$ implies that $M$ satisfies the condition in Proposition~\ref{propmonoid}\eqref{propmonoid.2}.
Therefore, by Proposition~\ref{propmonoid}, $M$ contains a non-zero idempotent.

Let $N$ denote the set of all idempotents in $M$. By the previous paragraph, $N$ contains at least two
elements. As $M$ is commutative, $N$ is a submonoid of $M$. As each element of $R$ acts as an endomorphism
of $M$, this action preserves $N$. Therefore $N$ is a non-zero $R$-subsemimodule of $M$.
From the minimality of $M$ we thus deduce that $N=M$, as claimed. 
\end{proof}

For elementary semimodules, analogous results can be found in
\cite[Sections~(15.27) and (15.28)]{Go} and
\cite[Proposition~1.2]{Il}.

\subsection{Proper semimodules of finite group semirings over $\mathbb{Z}_{\geq 0}$}\label{s4.3}

Let $G$ be a finite group and $R:=\mathbb{Z}_{\geq 0}[G]$ the corresponding group semiring over $\mathbb{Z}_{\geq 0}$.
Then the $\mathbb{Z}_{\geq 0}$-semimodule $\mathbf{B}$ extends to an $R$-semimodule be letting all
$g\in G$ act as the identity. We will denote this $R$-semimodule by ${}_R\mathbf{B}$.

\begin{proposition}\label{prop4.5}
Let $G$ be a finite group and $M$ a proper $R:=\mathbb{Z}_{\geq 0}[G]$-semimodule. Then the 
following conditions are equivalent.
\begin{enumerate}[$($i$)$]
\item\label{thm4.5.1} $M$ is minimal.
\item\label{thm4.5.2} $M$ is elementary. 
\item\label{thm4.5.3} $M$ is simple. 
\item\label{thm4.5.4} $M$ is isomorphic to ${}_R\mathbf{B}$. 
\end{enumerate}
\end{proposition}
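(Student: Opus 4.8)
The plan is to close the cycle of implications rather than prove each equivalence separately. The implication \eqref{thm4.5.4}$\Rightarrow$\eqref{thm4.5.3} is immediate because ${}_R\mathbf{B}$ has exactly two elements and any two-element semimodule is simple, while \eqref{thm4.5.3}$\Rightarrow$\eqref{thm4.5.1} and \eqref{thm4.5.3}$\Rightarrow$\eqref{thm4.5.2} hold by definition. Thus the real content lies in the two implications \eqref{thm4.5.1}$\Rightarrow$\eqref{thm4.5.4} and \eqref{thm4.5.2}$\Rightarrow$\eqref{thm4.5.4}, and the argument should follow the pattern already used for $\mathbf{B}[G]$ and $\mathbb{N}_k[G]$ in Theorems~\ref{thm25.3} and \ref{thm25.5}. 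The key preliminary observation I would record is that $R=\mathbb{Z}_{\geq 0}[G]$ is a finitely generated semiring, being generated by the finite set $G$, so the structural results of Subsection~\ref{s4.25} are available.

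For \eqref{thm4.5.1}$\Rightarrow$\eqref{thm4.5.4} I would start from Corollary~\ref{corminprosem}: since $R$ is finitely generated and $M$ is a minimal proper semimodule, every element of $M$ is idempotent, so $M$ is a commutative monoid semilattice with least element $0$. Now pick a non-zero $m\in M$ and average over the group, setting $n:=\sum_{g\in G}g(m)$. This $n$ is $G$-invariant by reindexing the sum; it is non-zero because the summand indexed by the identity is $m$, so $m\le n$ in the semilattice order while $m\neq 0$; and it is idempotent like everything in $M$. Consequently $\{0,n\}$ is closed under addition, under the $\mathbb{Z}_{\geq 0}$-action (as $n$ is idempotent), and under $G$ (as $n$ is fixed), hence is a non-zero $R$-subsemimodule. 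Minimality then forces $M=\{0,n\}\cong {}_R\mathbf{B}$.

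For \eqref{thm4.5.2}$\Rightarrow$\eqref{thm4.5.4} the idea is to produce a surjection onto ${}_R\mathbf{B}$ and invoke elementariness. By Lemma~\ref{lem4.46}, $0$ is the only invertible element of $M$, so the non-zero elements are exactly the non-invertible ones and they form an additively absorbing ideal. I would define $\varphi\colon M\to {}_R\mathbf{B}$ by $\varphi(0)=0$ and $\varphi(m)=1$ for $m\neq 0$, and check that $\varphi$ is a surjective homomorphism of $R$-semimodules. Its kernel congruence $\sim_\varphi$ has exactly the two classes $\{0\}$ and $M\setminus\{0\}$, and since it separates $0$ from the non-zero elements it is not the full relation; as $M$ is elementary it must therefore be the equality relation, which forces $|M|=2$ and hence $M\cong {}_R\mathbf{B}$.

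The step I expect to be the main obstacle — and the one deserving the most care — is verifying that $\varphi$ (equivalently, the two-class relation) is genuinely $R$-equivariant, since a general endomorphism can send a non-zero element to $0$. The crucial point is that for $r=\sum_g n_g g\neq 0$ and $m\neq 0$ one has $r(m)\neq 0$: each $g(m)$ is non-zero because $g$ acts invertibly, the term $n_{g_0}\,g_0(m)$ for a coefficient $n_{g_0}\geq 1$ is then non-zero, and it absorbs the remaining summands because the non-zero elements form an ideal. This is precisely where Lemma~\ref{lem4.46} is used, and establishing this non-vanishing is what makes the Rees-type quotient in the elementary case legitimate.
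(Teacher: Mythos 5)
Your proposal is correct, and its overall skeleton --- the trivial implications plus group-averaging for \eqref{thm4.5.1}$\Rightarrow$\eqref{thm4.5.4} and a two-class Rees-type congruence for \eqref{thm4.5.2}$\Rightarrow$\eqref{thm4.5.4} --- matches the paper's, but in both substantive steps you lean on different auxiliary results. In the minimal case the paper does not invoke Corollary~\ref{corminprosem}: it chooses $m$ \emph{non-invertible} (possible since $M$ is proper), observes that $n=\sum_{g\in G}g(m)$ is non-invertible, hence non-zero, because the non-invertible elements form an additive ideal, concludes from minimality that $M$ is generated by $n$ so that $G$ acts trivially, and then quotes the classification Theorem~\ref{thm3.2} to identify $M$ with ${}_R\mathbf{B}$; your route instead extracts idempotency of every element from Corollary~\ref{corminprosem} and closes $\{0,n\}$ directly under $R$, which is in fact the pattern of the proofs of Theorems~\ref{thm25.3} and \ref{thm25.5} and avoids the appeal to the $\mathbb{Z}_{\geq 0}$-classification. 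In the elementary case the paper partitions $M$ into the invertible elements $M_0$ and the non-invertible elements $M_1$ and asserts directly that this two-class relation is an $R$-congruence (since $G$ preserves both classes), never using Lemma~\ref{lem4.46}; you first collapse $M_0$ to $\{0\}$ via Lemma~\ref{lem4.46} and then verify equivariance of $\varphi$ by the explicit computation that $r\neq 0$ and $m\neq 0$ force $r(m)\neq 0$. Your version supplies exactly the detail the paper leaves implicit --- the coefficient argument handling $r$ with some zero coefficients, and additivity via the ideal property of non-zero elements --- at the cost of one extra lemma; both verifications are sound. One small caution: for Corollary~\ref{corminprosem} the relevant hypothesis is that $R$ admits a \emph{finite $\mathbb{Z}_{\geq 0}$-spanning set} (so that $Rm$ is a finitely generated monoid, as in the corollary's proof), not merely that $R$ is generated by a finite set under $+$ and $\cdot$; this holds for $\mathbb{Z}_{\geq 0}[G]$ with spanning set $G$ itself, so your application is legitimate, but the finite-basis property is what should be cited.
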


\begin{proof}
The implication \eqref{thm4.5.4}$\Rightarrow$\eqref{thm4.5.3} follows from the fact that 
$|\mathbf{B}|=2$. The implications \eqref{thm4.5.3}$\Rightarrow$\eqref{thm4.5.2}
and \eqref{thm4.5.3}$\Rightarrow$\eqref{thm4.5.1} follow directly from the definitions.
 
To prove the implication  \eqref{thm4.5.1}$\Rightarrow$\eqref{thm4.5.4}, assume that $M$ is minimal
and let $m\in M$ be a non-invertible element (this $m$ exists as $M$ is assumed to be a proper semimodule). 
Then $g(m)$ is non-invertible, for all $g\in G$ and hence the element
$n:=\sum_{g\in G}g(m)$ is both, non-invertible (as non-invertible elements of $M$ form an ideal) and
$G$-in\-va\-ri\-ant. As $M$ is minimal, it is generated by $0$ and $n$, in particular, the action of $G$
on $M$ is trivial. Now the fact that $M\cong {}_R\mathbf{B}$ follows from Theorem~\ref{thm3.2},
proving the implication \eqref{thm4.5.1}$\Rightarrow$\eqref{thm4.5.4}.

To prove the implication  \eqref{thm4.5.2}$\Rightarrow$\eqref{thm4.5.4}, assume that $M$ is elementary.
Decompose $M=M_0\coprod M_1$, where $M_0$ is the set of invertible elements of $M$ and  
$M_1$ is the set of non-invertible elements of $M$. Note that $M_0\neq\varnothing$ as $0\in M_0$
and $M_1\neq\varnothing$ as $M$ is assumed to be a proper semimodule. As $G$ is a group, the action of
$G$ preserves both $M_0$ and $M_1$. This implies that the equivalence relation on $M$ with two classes
$M_0$ and $M_1$ is an $R$-congruence. As $M$ is assumed to be elementary, it follows that $|M_0|=|M_1|=1$
and thus $M\cong {}_R\mathbf{B}$, establishing the implication  \eqref{thm4.5.2}$\Rightarrow$\eqref{thm4.5.4}.
This completes the proof.
\end{proof}

\subsection{$\mathbb{Z}_{\geq 0}[G]$-modules}\label{s4.4}

Let $G$ be a finite group and $R:=\mathbb{Z}_{\geq 0}[G]$ the corresponding group semiring over $\mathbb{Z}_{\geq 0}$.
The following statement is a generalization of Proposition~\ref{prop5.5}.

\begin{proposition}\label{prop4.5a}
The restriction functor from $\mathbb{Z}[G]$-Mod to $\mathbb{Z}_{\geq 0}[G]$-Mod is an isomorphism of categories. 
\end{proposition}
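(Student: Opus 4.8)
The plan is to mimic the proof of Proposition~\ref{prop5.5}, replacing the two-element group $S_2$ by an arbitrary finite group $G$, and to exhibit restriction as a bijection both on objects and on morphisms. First I would unwind what the two categories are. By Proposition~\ref{prop3.0}, a $\mathbb{Z}_{\geq 0}[G]$-module is precisely an abelian group $M$ equipped with an action of $G$ by monoid automorphisms (the $\mathbb{Z}_{\geq 0}$-scalars acting in the unique forced way), while a $\mathbb{Z}[G]$-module is an abelian group with an action of $G$ in which $-1$ acts by negation. Thus the underlying data of the two structures differ only by the action of the negative scalars.

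For objects, I would argue that every $\mathbb{Z}_{\geq 0}[G]$-module $M$ extends uniquely to a $\mathbb{Z}[G]$-module. Since $M$ is an abelian group, the assignment $m\mapsto -m$ is available, and one defines the action of $-g=(-1)g$ by $m\mapsto -(g(m))$ for each $g\in G$; extending additively gives a candidate $\mathbb{Z}[G]$-action. Uniqueness is immediate: in any ring action extending the given one, the identity $m+(-1)(m)=(1+(-1))(m)=0(m)=0_M$ forces $(-1)(m)=-m$, and the action of each $-g$ is then determined. This shows that restriction, followed by this extension, is the identity on objects, and conversely, so that the functor is a bijection on objects.

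For morphisms, a homomorphism of $\mathbb{Z}_{\geq 0}[G]$-modules is by definition a homomorphism of the underlying monoids intertwining the $G$-actions. Since source and target are abelian groups, any such monoid homomorphism automatically preserves additive inverses and is therefore a group homomorphism; intertwining the $G$-action together with additivity then forces it to intertwine the full $\mathbb{Z}[G]$-action. Hence the Hom-sets in the two categories literally coincide, and restriction is the identity on morphisms. Assembling the object bijection with the morphism identity yields the desired isomorphism of categories.

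The only step requiring genuine (though routine) verification is checking that the extension defined above really satisfies the $\mathbb{Z}[G]$-module axioms, in particular that letting $-1$ act as negation is compatible with both the ring multiplication in $\mathbb{Z}[G]$ and the $G$-action. This is where the main, rather mild, obstacle lies, but it follows at once from the fact that negation is a central automorphism of any abelian group, commuting with every endomorphism and in particular with each $g\in G$.
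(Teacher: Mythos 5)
Your proof is correct and follows essentially the same route as the paper: the paper's proof (like that of Proposition~\ref{prop5.5}, which it cites) defines the inverse functor by letting $-1$ act as negation and notes that monoid homomorphisms between abelian groups are automatically group homomorphisms. Your write-up merely fills in the routine uniqueness and compatibility checks that the paper leaves implicit.
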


\begin{proof}
Just like in the proof of  Proposition~\ref{prop5.5}, the inverse of this restriction is given by the unique 
extension of a $\mathbb{Z}_{\geq 0}[G]$-module structure to a $\mathbb{Z}[G]$-module structure by defining
the action of $-1\in \mathbb{Z}$ as the negation on the module.
\end{proof}

\subsection{$\mathbb{Z}[G]$-modules}\label{s4.5}

Let $G$ be a finite group. The following statement is a version of a result of P.~Hall, see \cite{Ha}.

\begin{proposition}\label{prop4.6}
Let $M$ be a simple $\mathbb{Z}[G]$-module. Then there is a prime $p\in\mathbb{Z}$ such that $pM=0$. 
\end{proposition}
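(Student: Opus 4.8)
The plan is to exploit two structural observations. First, since $M$ is a module, minimality is equivalent to simplicity by Proposition~\ref{prop4.4}, so $M$ is a non-zero abelian group whose only $\mathbb{Z}[G]$-submodules are $0$ and $M$. Second, for every positive integer $n$ the multiplication-by-$n$ map $m\mapsto nm$ is a $\mathbb{Z}[G]$-endomorphism of $M$, because each $g\in G$ acts $\mathbb{Z}$-linearly and so commutes with multiplication by $n$. Consequently the image $nM$, the $n$-torsion subgroup $M[n]=\{m\in M:nm=0\}$, and the full torsion subgroup $T$ of $M$ are all $\mathbb{Z}[G]$-submodules of $M$. By simplicity, each of these equals either $0$ or $M$, and I would organise the proof around the resulting dichotomy $T=M$ versus $T=0$.

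The easy case is $T=M$, i.e.\ $M$ is torsion. Here I would pick any non-zero $m\in M$, note that it has finite additive order $n>1$, choose a prime $p\mid n$, and observe that $(n/p)m$ is a non-zero element of order $p$. Thus $M[p]\neq 0$, whence $M[p]=M$ by simplicity, and this is precisely the assertion $pM=0$.

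The remaining case $T=0$, that is $M$ torsion-free, is the one I expect to be the main obstacle, since there $pM=0$ is impossible and the case must instead be shown not to occur. For torsion-free $M$ the map $m\mapsto nm$ is injective, so $nM\neq 0$ and hence $nM=M$ for every $n\geq 1$; in other words $M$ is divisible, and being torsion-free and divisible it acquires the structure of a non-zero $\mathbb{Q}$-vector space. At this point the finiteness of $G$ becomes essential: for any non-zero $m\in M$ the submodule $\mathbb{Z}[G]m$ is generated as an abelian group by the finite set $\{g(m):g\in G\}$, hence is a non-zero, torsion-free, finitely generated abelian group, so it is isomorphic to $\mathbb{Z}^r$ with $r\geq 1$ and in particular is not divisible. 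Therefore $\mathbb{Z}[G]m$ is a non-zero proper submodule of the divisible module $M$, contradicting simplicity. This excludes the torsion-free case and completes the argument.
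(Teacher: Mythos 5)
Your proof is correct, and it shares the paper's master plan: both arguments rest on the observation that multiplication by an integer $n$ commutes with the $G$-action, so that $nM$, the $n$-torsion subgroup and the full torsion subgroup are all submodules, and both split into the torsion case and the torsion-free case. In the torsion case the two proofs are essentially identical: a nonzero torsion element yields an element of prime order $p$, and simplicity forces the $p$-torsion to be all of $M$, i.e.\ $pM=0$. Where you genuinely diverge is in how the torsion-free case is excluded. The paper first establishes, as a global preliminary, that $M$ is finitely generated over $\mathbb{Z}$ (a simple module is finitely generated over $\mathbb{Z}[G]$, and $\mathbb{Z}[G]$ has finite rank over $\mathbb{Z}$ since $G$ is finite); once that is in place, a torsion-free $M$ satisfies $2M\neq M$, producing a proper nonzero submodule in one line. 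You instead defer the use of the finiteness of $G$: from simplicity you first deduce that a torsion-free $M$ would be divisible (each $nM$ is a nonzero submodule, hence equals $M$), and then contradict divisibility using the \emph{locally} finitely generated submodule $\mathbb{Z}[G]m\cong\mathbb{Z}^r$, which is not divisible. Both routes ultimately invoke the same underlying fact --- a nonzero finitely generated torsion-free abelian group is not divisible, equivalently $2\mathbb{Z}^r\neq\mathbb{Z}^r$ --- but your version is marginally more self-contained, since it avoids the global finite-generation step (and the implicit appeal to simple modules being cyclic), while the paper's version buys a very quick dispatch of the torsion-free case at the cost of that preliminary paragraph. Your side remark that torsion-free plus divisible makes $M$ a $\mathbb{Q}$-vector space is true but unnecessary; the divisibility contradiction already suffices.
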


\begin{proof}
As $M$ is simple, it is finitely generated over $\mathbb{Z}[G]$. As $G$ is finite, $\mathbb{Z}[G]$
is of finite rank over $\mathbb{Z}$. Therefore $M$ is finitely generated over $\mathbb{Z}$.

Assume first that $M$ is torsion-free over $\mathbb{Z}$. Then the subgroup $2M\neq M$ is $G$-invariant
and hence is a submodule. This contradicts simplicity of $M$. Therefore the torsion subgroup of
$M$ is non-zero. As all elements of $G$ act as automorphisms of $M$, they preserve the torsion subgroup
and hence this torsion subgroup is, in fact, a submodule. Due to simplicity of $M$, it follows
that $M$ is torsion.

Consequently, there is a prime $p\in\mathbb{Z}$ such that $\mathrm{Ker}(p\cdot{}_-)$ on $M$ is non-zero.
Again, as all elements of $G$ act as automorphisms of $M$, they preserve $\mathrm{Ker}(p\cdot{}_-)$ which
means that $M=\mathrm{Ker}(p\cdot{}_-)$ due to simplicity of $M$.
\end{proof}

As an immediate corollary from Proposition~\ref{prop4.6}, we have the following statement which reduces
classification of simple $\mathbb{Z}[G]$-modules to that of simple $\mathbb{Z}_p[G]$-modules, for all $p$.

\begin{corollary}\label{cor4.7}
Let $M$ be a simple $\mathbb{Z}[G]$-module. Then there is a prime $p\in\mathbb{Z}$ such that 
$M$ is a pullback, via $\mathbb{Z}[G]\tto\mathbb{Z}_p[G]$, of a simple $\mathbb{Z}_p[G]$-module.
\end{corollary}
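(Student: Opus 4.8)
The plan is to read the corollary off Proposition~\ref{prop4.6} almost for free. By that proposition there is a prime $p$ with $pM=0$. The first step is to observe that this forces the $\mathbb{Z}[G]$-action on $M$ to factor through the quotient map $\pi\colon\mathbb{Z}[G]\tto\mathbb{Z}_p[G]$. Indeed, the kernel of $\pi$ is $p\mathbb{Z}[G]$, and every element of it has the form $py$ with $y\in\mathbb{Z}[G]$; since $pM=0$ we get $(py)m=p(ym)=0$ for all $m\in M$. Hence $\ker(\pi)$ acts as zero, so the action descends to a well-defined $\mathbb{Z}_p[G]$-module structure on the underlying abelian group of $M$. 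Call this $\mathbb{Z}_p[G]$-module $N$; by construction $M$ is precisely the pullback of $N$ along $\pi$.

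It then remains to check that $N$ is simple as a $\mathbb{Z}_p[G]$-module. For this I would note that the surjectivity of $\pi$ makes the lattice of $\mathbb{Z}[G]$-submodules of $M$ coincide with the lattice of $\mathbb{Z}_p[G]$-submodules of $N$: an additive subgroup $S$ satisfies $\mathbb{Z}[G]\cdot S\subseteq S$ if and only if it satisfies $\mathbb{Z}_p[G]\cdot S\subseteq S$, because $r\cdot s=\pi(r)\cdot s$ and the images $\pi(r)$ exhaust $\mathbb{Z}_p[G]$. Since $M$ is simple (equivalently minimal, by Proposition~\ref{prop4.4}, as it is a module) it has no proper nonzero $\mathbb{Z}[G]$-submodules, so $N$ has no proper nonzero $\mathbb{Z}_p[G]$-submodules and is therefore simple; note $N\neq 0$ because $M\neq 0$.

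There is essentially no hard part here: the argument reduces entirely to Proposition~\ref{prop4.6} together with the standard facts that an action annihilating an ideal descends to the quotient ring and that restriction of scalars along a surjection preserves the submodule lattice. The only point meriting a line of care is to confirm that \emph{simple} on the $\mathbb{Z}_p[G]$-side means the same notion (minimal, hence also elementary via Proposition~\ref{prop4.4}) as on the $\mathbb{Z}[G]$-side, but this is immediate from the lattice identification above.
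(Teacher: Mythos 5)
Your proof is correct and is exactly the argument the paper intends: the paper states Corollary~\ref{cor4.7} as an immediate consequence of Proposition~\ref{prop4.6}, leaving implicit precisely the descent you spell out (the annihilated ideal $p\mathbb{Z}[G]=\ker\pi$ forces the action to factor through $\mathbb{Z}_p[G]$, and surjectivity of $\pi$ identifies the submodule lattices, preserving simplicity). Your extra care about the equivalence of simplicity notions via Proposition~\ref{prop4.4} is harmless but not needed, since for modules over a ring this is the classical notion on both sides.
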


\subsection{Extreme $\mathbb{Z}_{\geq 0}[G]$-modules}\label{s4.6}

\begin{theorem}\label{thm4.8}
Let $G$ be a finite group and $M$ a $\mathbb{Z}_{\geq 0}[G]$-module. 
Then the  following conditions are equivalent.
\begin{enumerate}[$($i$)$]
\item\label{thm4.8.1} $M$ is minimal.
\item\label{thm4.8.2} $M$ is elementary. 
\item\label{thm4.8.3} $M$ is simple. 
\item\label{thm4.8.4} $M$ is the restriction of a simple $\mathbb{Z}[G]$-module. 
\end{enumerate}
\end{theorem}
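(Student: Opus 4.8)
The plan is to identify $M$, via the isomorphism of categories in Proposition~\ref{prop4.5a}, with a $\mathbb{Z}[G]$-module $\widetilde{M}$ on the same underlying abelian group (where $-1$ acts as negation), and then to run the cycle of implications \eqref{thm4.8.1}$\Rightarrow$\eqref{thm4.8.3}$\Rightarrow$\eqref{thm4.8.2}$\Rightarrow$\eqref{thm4.8.4}$\Rightarrow$\eqref{thm4.8.1}. The first two implications are cheap: \eqref{thm4.8.1}$\Rightarrow$\eqref{thm4.8.3} is exactly Proposition~\ref{prop4.4}, since $M$ is a module, and \eqref{thm4.8.3}$\Rightarrow$\eqref{thm4.8.2} is immediate from the definition of simple.

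For \eqref{thm4.8.2}$\Rightarrow$\eqref{thm4.8.4}, the key observation is that, because $M$ is a group, its $\mathbb{Z}_{\geq 0}[G]$-congruences coincide with its $\mathbb{Z}[G]$-congruences. Indeed, by \cite[Subsection~6.2]{GaMa} any congruence on the group $M$ consists of the cosets of a subgroup $N$; compatibility with the action of every $g\in G$ (applied to $m\sim 0$) forces $g(N)=N$, so that $N$ is a $\mathbb{Z}[G]$-submodule of $\widetilde{M}$, and conversely the cosets of any $\mathbb{Z}[G]$-submodule clearly form a $\mathbb{Z}_{\geq 0}[G]$-congruence. Thus a proper non-zero submodule of $\widetilde{M}$ would produce a non-trivial $\mathbb{Z}_{\geq 0}[G]$-congruence on $M$; since $M$ is elementary, no such submodule exists, so $\widetilde{M}$ is a simple $\mathbb{Z}[G]$-module, which is \eqref{thm4.8.4}.

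The decisive step is \eqref{thm4.8.4}$\Rightarrow$\eqref{thm4.8.1}. Here I would invoke Proposition~\ref{prop4.6}: a simple $\mathbb{Z}[G]$-module is annihilated by some prime $p$, hence is a module over $\mathbb{Z}_p[G]$ and, being cyclic, a finite-dimensional $\mathbb{Z}_p$-vector space; in particular $M$ is \emph{finite}. This finiteness is what lets one upgrade subsemimodules to submodules: in a finite abelian group every submonoid is a subgroup, as for any element $x$ the repeated sums $x,2x,3x,\dots$ eventually cycle and hence already contain $-x$. Consequently every $\mathbb{Z}_{\geq 0}[G]$-subsemimodule of $M$ is a $G$-invariant subgroup, i.e.\ a $\mathbb{Z}[G]$-submodule of $\widetilde{M}$, and simplicity of $\widetilde{M}$ forces it to be $\{0\}$ or $M$. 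Hence $M$ is minimal, closing the cycle.

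The main obstacle is precisely this last implication: in the semiring world a subsemimodule is only required to be a submonoid, not a subgroup, so without finiteness there is no reason for a $\mathbb{Z}_{\geq 0}[G]$-subsemimodule to be closed under negation (e.g.\ $\mathbb{Z}_{\geq 0}\subset\mathbb{Z}$). It is exactly Hall's theorem, in the form of Proposition~\ref{prop4.6}, that rules out infinite simple $\mathbb{Z}[G]$-modules and thereby makes the monoid/group distinction evaporate.
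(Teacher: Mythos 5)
Your proposal is correct, and while it shares the paper's overall skeleton (identify $M$ with a $\mathbb{Z}[G]$-module via Proposition~\ref{prop4.5a}, use Hall's theorem for the hard direction, use the coset description of congruences from \cite[Subsection~6.2]{GaMa} for the elementary condition), the decisive step is handled by a genuinely different mechanism. The paper proves \eqref{thm4.8.4}$\Rightarrow$\eqref{thm4.8.3} by noting, via Corollary~\ref{cor4.7}, that the action on $M$ factors through $\mathbb{Z}_p[G]$ and that the restriction of $\mathbb{Z}[G]\tto\mathbb{Z}_p[G]$ to $\mathbb{Z}_{\geq 0}[G]$ remains surjective: since $-1\equiv p-1$ acts through the positive cone, the $\mathbb{Z}_{\geq 0}[G]$-subsemimodules and $\mathbb{Z}_{\geq 0}[G]$-congruences of $M$ coincide outright with the $\mathbb{Z}_p[G]$-submodules and module congruences, giving simplicity in one stroke. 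You instead extract from Proposition~\ref{prop4.6} only the \emph{finiteness} of $M$ (cyclic over the finite ring $\mathbb{Z}_p[G]$) and then use the observation that a submonoid of a finite abelian group is automatically a subgroup, which upgrades subsemimodules to submodules and yields \eqref{thm4.8.1} directly; your cycle then closes through Proposition~\ref{prop4.4}, which the paper does not invoke here (it proves \eqref{thm4.8.1}$\Rightarrow$\eqref{thm4.8.4} and \eqref{thm4.8.2}$\Rightarrow$\eqref{thm4.8.4} directly, the latter exactly as in your coset argument). Both routes use Hall's theorem exactly once and in the same direction; the trade-off is that the paper's positivity argument identifies the full lattices of subsemimodules and congruences with the module-theoretic ones independently of any finiteness, whereas your argument is insensitive to whether the positive part surjects --- any hypothesis forcing simple modules to be finite would do --- and isolates cleanly, as you note, that finiteness is what makes the monoid/group distinction evaporate.
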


\begin{proof}
Assume \eqref{thm4.8.4}. Then $M$ is the restriction of a simple $\mathbb{Z}_p[G]$-module by 
Corollary~\ref{cor4.7}. Note that the restriction of the map $\mathbb{Z}[G]\tto\mathbb{Z}_p[G]$
to the subset $\mathbb{Z}_{\geq 0}[G]$ of the domain
remains surjective. Hence we have \eqref{thm4.8.3} by definition and this  also implies
\eqref{thm4.8.1} and \eqref{thm4.8.2} as special cases.

Assume \eqref{thm4.8.1} and consider $M$ as a $\mathbb{Z}[G]$-module via Proposition~\ref{prop4.5a}.
Now, the minimality of $M$ as  a $\mathbb{Z}_{\geq 0}[G]$-module implies simplicity of 
$M$ as a $\mathbb{Z}[G]$-module since any $\mathbb{Z}[G]$-sub\-mo\-du\-le of $M$ would also be
a $\mathbb{Z}_{\geq 0}[G]$-submodule. Hence we have \eqref{thm4.8.4}. Similarly 
\eqref{thm4.8.2} implies  \eqref{thm4.8.4}. The claim follows.
\end{proof}

\section{Extreme $\mathbb{Z}_{\geq 0}[S_3]$-semimodules}\label{s7}

\subsection{The symmetric group $S_3$}\label{s7.1}

In this section we study extreme semimodules over various semirings related to the 
symmetric group $S_3$ of permutations of the set $\{1,2,3\}$. We let $s$ to be the transposition $(1,2)$
and $t$ be the transposition $(2,3)$. Then $S_3=\{e,s,t,st,ts,w_0:=sts=tst\}$.

The Hasse diagram of the {\em Bruhat order} $\preceq$ on $S_3$ has the form
\begin{displaymath}
\xymatrix{ 
&w_0\ar@{-}[dl]\ar@{-}[dr]&\\
st\ar@{-}[d]\ar@{-}[drr]&&ts\ar@{-}[d]\ar@{-}[dll]\\
s\ar@{-}[dr]&&t\ar@{-}[dl]\\
&e&
}
\end{displaymath}
The {\em Kazhdan-Lusztig} basis $\{\underline{w}:w\in S_3\}$, cf. \cite{KL}, is defined via 
\begin{displaymath}
\underline{w}:=\sum_{x\preceq w}x.
\end{displaymath}
Concretely, we have 
\begin{gather*}
\underline{e}=e,\quad  
\underline{s}=e+s,\quad  
\underline{t}=e+t,\quad  \\
\underline{st}=e+s+t+st,\quad  
\underline{ts}=e+s+t+ts,\quad  
\underline{w_0}=e+s+t+st+ts+w_0.  
\end{gather*}
The multiplication table $(a,b)\mapsto ab$ in this basis is given by
\begin{equation}\label{eq-71}
\begin{array}{c||c|c|c|c|c|c}
a\backslash b& \underline{e}& \underline{s}& \underline{t}& \underline{st}& \underline{ts}& \underline{w_0}\\
\hline\hline
\underline{e}& \underline{e}& \underline{s}& \underline{t}& \underline{st}& \underline{ts}& \underline{w_0}\\
\hline
\underline{s}& \underline{s}& 2\,\underline{s}& \underline{st}& 2\,\underline{st}& 
\underline{s}+\underline{w_0}& 2\,\underline{w_0}\\
\hline
\underline{t}& \underline{t}& \underline{ts}& 2\,\underline{t}& 
\underline{t}+\underline{w_0}& 2\,\underline{ts}& 2\,\underline{w_0}\\
\hline
\underline{st}& \underline{st}& \underline{s}+\underline{w_0}& 
2\,\underline{st}& \underline{st}+2\,\underline{w_0}& 2\,\underline{s}+2\,\underline{w_0}& 4\,\underline{w_0}\\
\hline
\underline{ts}& \underline{ts}& 2\,\underline{ts}& \underline{t}+\underline{w_0}& 
2\,\underline{t}+2\,\underline{w_0}& \underline{ts}+2\,\underline{w_0}& 4\,\underline{w_0}\\
\hline
\underline{w_0}& \underline{w_0}& 2\,\underline{w_0}& 2\,\underline{w_0}& 
4\,\underline{w_0}& 4\,\underline{w_0}& 6\,\underline{w_0}\\
\end{array}
\end{equation}

In this section we denote by $R$ the $\mathbb{Z}_{\geq 0}$-subsemiring of $\mathbb{Z}[S_3]$
spanned by the set $\{\underline{w}:w\in S_3\}$. In the above multiplication table we see that 
all structure constants in the Kazhdan-Lusztig basis are non-negative integers. It follows
that $R$ is free as a $\mathbb{Z}_{\geq 0}$-semimodule with basis $\{\underline{w}:w\in S_3\}$.

\subsection{Extreme  $\mathbb{Z}_{\geq 0}[S_3]$-modules}\label{s7.2}

By Subsection~\ref{s4.4} all extreme $\mathbb{Z}_{\geq 0}[S_3]$-modules are restrictions to
$\mathbb{Z}_{\geq 0}[S_3]$ of simple $\mathbb{Z}_{p}[S_3]$-modules, where $p$ is a prime.
Let us recall all such modules. There are three simple modules for every $p\neq 2,3$ and
there are two simple modules for $p=2,3$.

For any prime $p$, we have the {\em trivial} module $\mathbb{Z}_{p}$ on which each $w\in S_3$
acts as the identity.

For any prime $p>2$, we have the {\em sign} module $\mathbb{Z}_{p}$ on which each $w\in S_3$
acts as the multiplication with $\mathrm{sign}(w)$. This module is also defined for $p=2$ but
in this case it coincides with the trivial module.

For $p\neq 3$, we have the $2$-dimensional module $\mathbb{Z}_{p}\oplus \mathbb{Z}_{p}$ on which
$s$ and $t$ act as the linear transformations given by the matrices
\begin{displaymath}
\left(\begin{array}{cc}0&1\\1&0\end{array}\right),\quad
\left(\begin{array}{cc}1&p-1\\0&p-1\end{array}\right),
\end{displaymath}
respectively. This module is also defined for $p=3$, however, in the latter case it is not simple
as the linear span of $\left(\begin{array}{c}1\\2\end{array}\right)$ forms a submodule.

\subsection{Extreme proper $\mathbb{Z}_{\geq 0}[S_3]$-semimodules}\label{s7.3}

Extreme proper $\mathbb{Z}_{\geq 0}[S_3]$-se\-mi\-mo\-du\-les are classified by Proposition~\ref{prop4.5}.
In fact, there is only one such semimodule, namely the semimodule 
${}_{\mathbb{Z}_{\geq 0}[S_3]}\mathbf{B}$ on which all $w\in S_3$ act 
as the identity. 

\subsection{Extreme $R$-modules}\label{s7.4}

\begin{proposition}\label{prop7.1}
The restriction functor from $\mathbb{Z}[S_3]$-Mod to $R$-Mod is an isomorphism of categories. 
\end{proposition}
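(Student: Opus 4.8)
The plan is to argue exactly as in the proof of Proposition~\ref{prop5.6}, showing that the restriction functor is a bijection on objects and is fully faithful, hence an isomorphism of categories. The single input specific to $S_3$ that I would isolate first is that the Kazhdan--Lusztig elements $\{\underline{w}:w\in S_3\}$ form not merely a $\mathbb{Z}_{\geq 0}$-basis of $R$ but a genuine $\mathbb{Z}$-basis of $\mathbb{Z}[S_3]$; this is immediate because the transition matrix between the group basis and the Kazhdan--Lusztig basis is unitriangular with respect to the Bruhat order. In particular, additively $R$ is the free commutative monoid $\mathbb{Z}_{\geq 0}^{6}$, whose group completion is precisely $(\mathbb{Z}[S_3],+)\cong\mathbb{Z}^{6}$, and every element of $\mathbb{Z}[S_3]$ is a difference of two elements of $R$ (equivalently, $\mathbb{Z}[S_3]$ is generated as a ring by $R$ together with $-1$).

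For the bijection on objects, I would recall that an object of $R\text{-}\mathrm{Mod}$ is an abelian group $M$ together with a semiring homomorphism $f\colon R\to \mathrm{End}_{\mathbf{Ab}}(M)$, the target being a ring. Since $(R,+)$ is free with group completion $(\mathbb{Z}[S_3],+)$ and $(\mathrm{End}_{\mathbf{Ab}}(M),+)$ is a group, $f$ extends uniquely to an additive map $\tilde f\colon \mathbb{Z}[S_3]\to \mathrm{End}_{\mathbf{Ab}}(M)$; concretely this amounts to letting $-1$ act as negation, as in Proposition~\ref{prop5.6}. The extension $\tilde f$ sends $\underline{e}=1$ to $\mathrm{id}_M$, so the only point requiring an argument is that $\tilde f$ is multiplicative. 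Both $\tilde f(xy)$ and $\tilde f(x)\tilde f(y)$ are $\mathbb{Z}$-bilinear in $(x,y)$, so it suffices to verify the equality on pairs of Kazhdan--Lusztig basis elements; and there it holds because $R$ is a subsemiring, so that $\underline{u}\,\underline{v}\in R$ (see the table~\eqref{eq-71}) and hence $\tilde f(\underline{u}\,\underline{v})=f(\underline{u}\,\underline{v})=f(\underline{u})f(\underline{v})=\tilde f(\underline{u})\tilde f(\underline{v})$. Thus each $R$-module structure extends uniquely to a $\mathbb{Z}[S_3]$-module structure, providing the inverse of restriction on objects.

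Full faithfulness is the easy part: given $\mathbb{Z}[S_3]$-modules $M,N$, any additive map $\varphi\colon M\to N$ commuting with the action of every $\underline{w}$ automatically commutes with the action of every group element $w\in S_3$, since each such $w$ is a $\mathbb{Z}$-linear combination of the $\underline{u}$ and $\varphi$ is additive; hence $\mathrm{Hom}_R(M,N)=\mathrm{Hom}_{\mathbb{Z}[S_3]}(M,N)$. Combining the two parts, restriction is an isomorphism of categories. The only place where something genuinely needs to be checked is the multiplicativity of $\tilde f$, and this is precisely where I use that $R$ is closed under multiplication inside $\mathbb{Z}[S_3]$; the structural argument through the group completion is what lets me avoid the more laborious alternative of defining the $s$- and $t$-actions by $s=\underline{s}-\underline{e}$ and $t=\underline{t}-\underline{e}$ and then verifying the involution and braid relations by hand.
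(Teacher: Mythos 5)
Your proposal is correct and takes essentially the same approach as the paper: the paper's one-line proof likewise inverts restriction by the unique additive extension of an $R$-module structure to $\mathbb{Z}[S_3]$ obtained by letting $-1$ act as negation, which rests on the Kazhdan--Lusztig elements forming a $\mathbb{Z}$-basis of $\mathbb{Z}[S_3]$. You simply make explicit the details the paper leaves implicit, notably the biadditivity argument reducing multiplicativity of the extension to basis pairs (where closure of $R$ under multiplication applies) and the automatic upgrade of monoid homomorphisms between abelian groups to group homomorphisms.
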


\begin{proof}
Just like in the proof of  Proposition~\ref{prop5.5}, the inverse of this restriction is given by the unique 
extension of an $R$-module structure to a $\mathbb{Z}[S_3]$-module structure by defining
the action of $-1\in \mathbb{Z}$ as the negation on the module.
\end{proof}

Consequently, extreme $R$-modules are exactly the restrictions of simple $\mathbb{Z}[S_3]$-mo\-du\-les,
see Subsection~\ref{s7.2} for an explicit description of the latter.

\subsection{Some extreme proper $R$-semimodules}\label{s7.5}

We denote by $M_1$ the $R$-se\-mi\-mo\-du\-le with the underlying monoid $\mathbf{B}$ on which all 
$\underline{w}$, where $w\in S_3$, act as the identity.

We denote by $M_2$ the $R$-semimodule with the underlying monoid $\mathbf{B}$ on which all 
$\underline{w}$, where $e\neq w\in S_3$, act as zero.

We denote by $M_3$ the $R$-semimodule with the underlying monoid $\mathbf{B}$ on which all 
$\underline{w}$, where $w_0\neq w\in S_3$, act as the identity while $\underline{w_0}$ acts as zero.

We denote by $M_4$ the $R$-semimodule with the underlying monoid 
being the additive version of $\mathbf{B}\oplus\mathbf{B}$ on which 
the action of $\underline{w}$, where $w\in S_3$, is defined as follows:
\begin{itemize}
\item $\underline{e}$ acts as the identity;
\item $\underline{w_0}$ acts as zero;
\item $\underline{s}$ and $\underline{st}$ act by sending all non-zero elements to $(1,0)$;
\item $\underline{t}$ and $\underline{ts}$ act by sending all non-zero elements to $(0,1)$.
\end{itemize}

The fact that $M_1$ and $M_2$ are $R$-semimodules is straightforward.
That $M_3$ and $M_4$ are $R$-semimodules is easily checked using \eqref{eq-71}.
The $R$-semimodules $M_1$, $M_2$ and $M_3$ are simple. 
The $R$-semimodule $M_4$ is minimal but not elementary. Indeed, sending $(i,j)$ to $i+j$ 
defines a surjective homomorphism from $M_4$ to $M_3$ which has a non-trivial kernel.

We denote by $M_5$ the quotient of $M_4$ by the $R$-congruence which identifies
$(1,0)$ and $(1,1)$. We denote by $M_6$ the quotient of $M_4$ by the $R$-congruence 
which identifies $(0,1)$ and $(1,1)$. Then both $M_5$ and $M_6$ are minimal by
Proposition~\ref{prop2.2}.

Denote by $M_7$ the $R$-semimodule with the underlying monoid 
being the additive version of $\mathbf{B}\oplus\mathbf{B}$ on which 
the action of $\underline{w}$, where $w\in S_3$, is defined as follows:
\begin{itemize}
\item $\underline{e}$ acts as the identity;
\item $\underline{w_0}$ acts as zero;
\item $\underline{s}$ and $\underline{ts}$ act as follows: 
\begin{equation}\label{eq733}
\begin{array}{c||c|c|c|c}
m&(0,0)&(0,1)&(1,0)&(1,1)\\ \hline
\underline{s}(m)=\underline{ts}(m)&(0,0)&(0,0)&(1,1)&(1,1)\\ 
\end{array}
\end{equation}
\item $\underline{t}$ and $\underline{st}$ act as follows: 
\begin{equation}\label{eq734}
\begin{array}{c||c|c|c|c}
m&(0,0)&(0,1)&(1,0)&(1,1)\\ \hline
\underline{t}(m)=\underline{st}(m)&(0,0)&(1,1)&(0,0)&(1,1)\\ 
\end{array}
\end{equation}
\end{itemize}
We denote by $M_8$ the $R$-subsemimodule of $M_7$ with the underlying monoid 
consisting of the elements $(0,0)$, $(1,0)$ and $(1,1)$.
We denote by $M_9$ the $R$-subsemimodule of $M_7$ with the underlying monoid 
consisting of the elements $(0,0)$, $(0,1)$ and $(1,1)$.

\begin{lemma}\label{lem7.05}
The above defines on $M_7$, $M_8$ and $M_9$ the structures of elementary $R$-semimodules. 
\end{lemma}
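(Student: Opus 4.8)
The plan is to treat all three semimodules uniformly, exploiting the fact that the generators act in a very simple closed form. First I would record that, on the monoid $\mathbf{B}\oplus\mathbf{B}$, the tables \eqref{eq733} and \eqref{eq734} say precisely that $\underline{s}$ and $\underline{ts}$ act by $(i,j)\mapsto(i,i)$, while $\underline{t}$ and $\underline{st}$ act by $(i,j)\mapsto(j,j)$; moreover $\underline{e}$ acts as the identity and $\underline{w_0}$ as zero. Each of these maps is visibly a monoid endomorphism of $\mathbf{B}\oplus\mathbf{B}$, so the only thing left for the semimodule axioms is compatibility with the multiplication \eqref{eq-71}. This verification is routine: since $\mathbf{B}\oplus\mathbf{B}$ is idempotent, every positive integer coefficient in \eqref{eq-71} collapses (so $k\cdot x=x$ for $k\geq 1$), and every summand containing $\underline{w_0}$ contributes $(0,0)$; what remains is that, as operators, $\underline{s}$ and $\underline{t}$ are idempotent and satisfy $\underline{s}\,\underline{t}=\underline{t}$ and $\underline{t}\,\underline{s}=\underline{s}$, which is immediate from the closed form. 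For $M_8$ and $M_9$ I would additionally check closure: since $\underline{s}(i,j)=(i,i)$ and $\underline{t}(i,j)=(j,j)$, the sets $\{(0,0),(1,0),(1,1)\}$ and $\{(0,0),(0,1),(1,1)\}$ are each closed under both operators and under addition, so they are genuine subsemimodules.

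For elementariness, the plan is to show that any $R$-congruence $\sim$ identifying two distinct elements must be the full relation. The key reduction is that every nontrivial identification forces $(0,0)\sim(1,1)$. Indeed, applying $\underline{s}$ collapses the first coordinate into both slots and $\underline{t}$ collapses the second, so $(0,0)\sim(1,0)$ and $(0,1)\sim(1,1)$ each give $(0,0)\sim(1,1)$ after applying $\underline{s}$; $(0,0)\sim(0,1)$ and $(1,0)\sim(1,1)$ each give it after applying $\underline{t}$; and $(1,0)\sim(0,1)$ gives $\underline{s}(1,0)=(1,1)\sim(0,0)=\underline{s}(0,1)$. Once $(0,0)\sim(1,1)$ is known, adding $(1,0)$ to both sides yields $(1,0)\sim(1,1)$ and adding $(0,1)$ yields $(0,1)\sim(1,1)$, so all four elements of $M_7$ are identified and $\sim$ is full. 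The same argument, restricted to the three elements of each subsemimodule, settles $M_8$ and $M_9$: for $M_8$ one uses $\underline{s}$ to pass from $(0,0)\sim(1,0)$ and $\underline{t}$ to pass from $(1,0)\sim(1,1)$ to $(0,0)\sim(1,1)$, then adds $(1,0)$; for $M_9$ one uses $\underline{t}$ and $\underline{s}$ symmetrically, then adds $(0,1)$.

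I do not expect a serious obstacle; the content is a finite verification, and the one point needing care is completeness of the case analysis. For $M_7$ there are six unordered pairs of distinct elements, and I would check that each of them forces $(0,0)\sim(1,1)$ before invoking the additive bootstrap; for $M_8$ and $M_9$ there are three pairs each. The only genuinely non-mechanical step is spotting the closed forms $(i,j)\mapsto(i,i)$ and $(i,j)\mapsto(j,j)$, which simultaneously trivialise the axiom check against \eqref{eq-71} and drive the congruence analysis.
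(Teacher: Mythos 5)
Your proof is correct and follows essentially the same route as the paper: the closed forms $(i,j)\mapsto(i,i)$ and $(i,j)\mapsto(j,j)$ are precisely the paper's observation that $\underline{s}$ and $\underline{t}$ act as the identity on their images, and your reduction of \eqref{eq-71} via idempotency and $\underline{w_0}\mapsto 0$ is the paper's reduced table \eqref{eqneq5} in operator form ($S^2=S$, $T^2=T$, $ST=T$, $TS=S$). Your congruence analysis checks the same six pairs as the paper; the only cosmetic difference is that you funnel every nontrivial identification through $(0,0)\sim(1,1)$ before the additive bootstrap, whereas the paper splits on whether $\{(0,0)\}$ is a singleton class --- the underlying computations coincide.
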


\begin{proof}
Let us start with $M_7$. First of all we claim that all $\underline{w}$ are endomorphisms
of $\mathbf{B}\oplus\mathbf{B}$. That can be checked directly.
Further, taking into account that $\underline{w_0}$ acts as zero and 
that the monoid underlying $M_7$ consists of idempotents, the multiplication table
\eqref{eq-71} that we have to check takes the following form:
\begin{equation}\label{eqneq5}
\begin{array}{c||c|c|c|c|c}
a\backslash b& \underline{e}& \underline{s}& \underline{t}& \underline{st}& \underline{ts}\\
\hline\hline
\underline{e}& \underline{e}& \underline{s}& \underline{t}& \underline{st}& \underline{ts}\\
\hline
\underline{s}& \underline{s}& \underline{s}& \underline{st}& \underline{st}& 
\underline{s}\\
\hline
\underline{t}& \underline{t}& \underline{ts}& \underline{t}& 
\underline{t}& \underline{ts}\\
\hline
\underline{st}& \underline{st}& \underline{s}& 
\underline{st}& \underline{st}& \underline{s}\\
\hline
\underline{ts}& \underline{ts}& \underline{ts}& \underline{t}& 
\underline{t}& \underline{ts}
\end{array}
\end{equation}
It is straightforward to check that our definitions agree with this table
(here it is helpful to observe that both $\underline{s}$ and $\underline{t}$ are defined to act as
the identity transformations on all elements which appear in the 
lower rows of \eqref{eq733} and \eqref{eq734}). Hence $M_7$
is, indeed, an $R$-semimodule. Consequently, $M_8$ and $M_9$ are $R$-semimodules as well
as they are closed both with respect to addition and with respect to the action of $R$.

Assume that $\sim$ is an  $R$ congruence on $M_7$ different from the identity relation. 
If $(0,0)\sim (1,0)$, then $(0,0)\sim (1,1)$ by applying $\underline{s}$. Adding 
$(0,1)$ to $(0,0)\sim (1,1)$, we get $(0,1)\sim (1,1)$ and thus $\sim$ is the full relation.
If $(0,0)\sim (0,1)$, then $(0,0)\sim (1,1)$ by applying $\underline{t}$. Adding 
$(1,0)$ to $(0,0)\sim (1,1)$, we get $(1,0)\sim (1,1)$ and thus $\sim$ is the full relation.
If $(0,0)\sim (1,1)$, then $(0,1)\sim (1,1)$ by the above and, similarly,
$(1,0)\sim (1,1)$. Again, $\sim$ is the full relation.

Assume now that $(0,0)$ is a singleton equivalence class for $\sim$. If $(1,0)\sim(1,1)$,
applying $\underline{t}$ we get $(0,0)\sim(1,1)$, a contradiction. By a similar argument,
$(0,1)\sim(1,1)$ is not possible either. It remains to check the case when $(1,0)\sim(0,1)$.
However, again, applying $\underline{t}$ we get $(0,0)\sim(1,1)$, a contradiction.
This shows that the only $R$-congruences on $M_7$ are the equality relation and the full relation.
Therefore $M_7$ is elementary. The fact that $M_8$ and $M_9$ are elementary follows
from the above arguments for $M_7$. This proves our lemma.
\end{proof}

Below we illustrate the underlying meet semi-lattices of $M_4$ (on the left) and $M_7$ (on the right)
via their corresponding Hasse diagram depicted by the solid lines
with the action of $\underline{s}$ depicted by the dashed arrows and
the action of $\underline{t}$ depicted by the dotted arrows.
\begin{equation}\label{eqnewfor9}
\xymatrix{
&(0,0)\ar@{-}[dr]\ar@{-}[dl]\ar@{-->}@(l,ul)[]\ar@{.>}@(r,ur)[]&\\
(1,0)\ar@{-}[dr]\ar@{.>}@/^/[rr]\ar@{-->}@(l,ul)[]
&&(0,1)\ar@{-}[dl]\ar@{-->}@/^/[ll]\ar@{.>}@(r,ur)[]\\
&(1,1)\ar@{-->}@/^/[ul]\ar@{.>}@/_/[ur]&\\
}\qquad\qquad
\xymatrix{
&(0,0)\ar@{-}[dr]\ar@{-}[dl]\ar@{-->}@(l,ul)[]\ar@{.>}@(r,ur)[]&\\
(1,0)\ar@{-}[dr]\ar@{-->}@/_/[dr]\ar@{.>}@/^/[ur]
&&(0,1)\ar@{-}[dl]\ar@{-->}@/_/[ul]\ar@{.>}@/^/[dl]\\
&(1,1)\ar@{.>}@(l,dl)[]\ar@{-->}@(r,dr)[]&\\
}
\end{equation}

\subsection{Classification of extreme proper $R$-semimodules}\label{s7.6}

\begin{theorem}\label{thm7.2}
The $R$-semimodules $M_1$, $M_2$, $M_3$, $M_4$, $M_5$ and $M_6$
are the only minimal proper $R$-semimodules. 
\end{theorem}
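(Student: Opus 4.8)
The plan is to reduce, via the general theory of Section~\ref{s4}, to a finite combinatorial problem governed by the table \eqref{eq-71}. First I would observe that $M_1,\dots,M_6$ really are minimal proper: $M_1,M_2,M_3$ are simple, $M_4$ is minimal (Subsection~\ref{s7.5}), and $M_5,M_6$ are minimal by Proposition~\ref{prop2.2}; all are proper because their underlying monoids $\mathbf{B}$ and $\mathbf{B}\oplus\mathbf{B}$ are not groups, and they are pairwise non-isomorphic by cardinality together with the pattern of the action. So the whole content is the converse: any minimal proper $R$-semimodule $M$ is isomorphic to one of them. Since $R$ is finitely generated, Corollary~\ref{corminprosem} shows every element of $M$ is idempotent, i.e. $M$ is a commutative semilattice, and by Lemma~\ref{lem4.45} the element $0$ is its only invertible element. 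Minimality gives $M=R\,m_0$ for any nonzero $m_0$, so $M$ is finitely generated, hence a finite semilattice, and in particular possesses a top element $\top:=\sum_{m\in M}m$. Writing $a,b,c,d$ for the operators by which $\underline{s},\underline{t},\underline{st},\underline{ts}$ act, idempotency of $M$ collapses the coefficients in \eqref{eq-71} and yields the identities $a^2=a$, $b^2=b$, $c=ab$, $d=ba$, together with idempotency of the operator of $\underline{w_0}$.

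The key dichotomy comes from $\underline{w_0}$. Every product $\underline{x}\,\underline{w_0}$ is a positive multiple of $\underline{w_0}$ (last column of \eqref{eq-71}), so idempotency forces $r(\underline{w_0}(m))=\underline{w_0}(m)$ for each nonzero $r\in R$; hence $\underline{w_0}(M)$ is a subsemimodule on which $R$ acts as the identity. By minimality either $\underline{w_0}(M)=M$, in which case all $\underline{w}$ act as the identity and the semilattice has no proper nonzero subsemilattice, giving $|M|=2$ and $M\cong M_1$; or $\underline{w_0}$ acts as $0$, the substantial case, in which the action factors through the reduced multiplication table \eqref{eqneq5} recorded in Lemma~\ref{lem7.05}. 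In that case the entries $bc=b$ and $ad=a$ of \eqref{eqneq5} give $a=0\iff b=0$ (if $a=0$ then $c=ab=0$ and $b=bc=0$, and symmetrically), and if $a=b=0$ then every non-identity generator acts as $0$, so $M$ is a minimal semilattice with trivial action and $M\cong\mathbf{B}=M_2$ by Theorem~\ref{thm3.2}.

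The heart of the argument is the remaining situation $\underline{w_0}=0$, $a,b\neq0$, where I would run an orbit computation. Put $p:=a(\top)$; then $p\neq0$ because $a\neq0$ and $a$ is monotone, and using \eqref{eqneq5} one gets $a(p)=p$, $c(p)=ca(\top)=p$, $b(p)=d(p)=ba(\top)=:q$, and $\underline{w_0}(p)=0$. Consequently $R\cdot p$, which is automatically a subsemimodule (it is closed under addition since $x+y=r(p)+s(p)=(r+s)(p)$ and $r+s\in R$), equals the join-closure $\{0,p,q,p+q\}$ of $\{p,q\}$. By minimality $M=\{0,p,q,p+q\}$, so $|M|\le4$. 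Moreover $q\neq0$: otherwise $M=\{0,p\}$ with $b(p)=0$, i.e. $b=0$ on $M$, contradicting $b\neq0$.

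It then remains to read off the isomorphism type from the possible coincidences among the four elements $0,p,q,p+q$ with $p,q\neq0$. If $p=q$ then $M=\{0,p\}$ and every non-$\underline{w_0}$ generator acts as the identity, so $M\cong M_3$. If $p,q$ are incomparable, so that all four elements are distinct, then computing $a,b,c,d$ on $p,q,p+q$ via \eqref{eqneq5} shows that $a,c$ send all nonzero elements to $p$ and $b,d$ send them to $q$, whence $M\cong M_4$. The two chain cases $p+q=p$ (that is $q<p$) and $p+q=q$ (that is $p<q$) realize $M$ precisely as the two minimal quotients of $M_4$ obtained by identifying $(1,0)\sim(1,1)$ and $(0,1)\sim(1,1)$, namely $M_5$ and $M_6$. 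The main obstacle is organizational rather than conceptual: one must extract the operator identities from \eqref{eq-71}/\eqref{eqneq5} and verify, in each of the four posets, that the induced action is exactly the one defining $M_3$--$M_6$; the single decisive idea is that the $R$-orbit of $a(\top)$ is forced into a four-element semilattice, which caps $|M|$ at $4$.
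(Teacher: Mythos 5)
Your proposal is correct and follows essentially the same route as the paper's proof: idempotency of all elements via Corollary~\ref{corminprosem} together with Lemma~\ref{lem4.45}, the $\underline{w_0}$-dichotomy isolating $M_1$, the degenerate case where $\underline{s}$ and $\underline{t}$ act as zero giving $M_2$, and then a two-generator orbit $\{0,p,q,p+q\}$ forced by minimality to be all of $M$, with the coincidence pattern among $p$, $q$, $p+q$ yielding $M_3$--$M_6$. Your only deviations are cosmetic and harmless: you take the canonical elements $p=\underline{s}(\top)$ and $q=\underline{ts}(\top)$ built from the top of the finite semilattice where the paper works with $\underline{s}(m)$ and $\underline{ts}(m)$ for an arbitrary suitable $m$, and you derive the equivalence $\underline{s}=0\iff\underline{t}=0$ from the operator identities of \eqref{eqneq5} where the paper argues via non-invertibility of $\underline{ts}(m)$ and symmetry.
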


\begin{proof}
Let $M$ be a minimal proper $R$-semimodule. Then every element of $M$ is an idempotent by 
Corollary~\ref{corminprosem}.

Assume first that $M$ contains some non-invertible
element $m$ such that $n:=\underline{w_0}(m)$ is non-invertible. Then from \eqref{eq-71} it follows
that the submonoid $N=\{0,n\}$ of $M$  is $R$-invariant
and hence $M\cong M_1$ by minimality of $M$.

Next we note that \eqref{eq-71} implies that $\underline{w_0}(M)$ is $R$-invariant. 
After the previous paragraph, we may from now on assume that $\underline{w_0}(M)$ consists of 
invertible elements of $M$. We have $|\underline{w_0}(M)|=1$ by Lemma~\ref{lem4.45} which implies
that $\underline{w_0}$ acts on $M$ by multiplication with zero.

Assume that there is a non-invertible $m\in M$ such that $\underline{s}(m)$ is non-invertible.
Then $\underline{ts}(m)$ must be non-invertible for otherwise $\underline{s}\cdot \underline{ts}(m)=
\underline{w_0}(m)+\underline{s}(m)=\underline{s}(m)$ would be invertible. Consider 
the submonoid $N$ of $M$ generated by $\underline{s}(m)$ and $\underline{ts}(m)$. We have
\begin{displaymath}
\underline{s}\cdot \underline{s}(m)=2\,\underline{s}(m),\quad
\underline{s}\cdot \underline{ts}(m)=\underline{s}(m),\quad
\underline{t}\cdot \underline{s}(m)=\underline{ts}(m),\quad
\underline{t}\cdot \underline{ts}(m)=2\,\underline{ts}(m). 
\end{displaymath}
This means that $N$ is $R$-invariant and hence $N=M$ by the minimality of $M$.
As every element in $M$ is an idempotent, we have 
\begin{displaymath}
M=\{0,\underline{s}(m),\underline{ts}(m),\underline{s}(m)+\underline{ts}(m)\}. 
\end{displaymath}
If  $\underline{s}(m)=\underline{ts}(m)$, then $M=\{0,\underline{s}(m)\}$ and $M\cong M_3$.
If $\underline{s}(m)\neq\underline{ts}(m)$ and $|M|=4$,
then $M$ is isomorphic to $M_4$. If  $\underline{s}(m)\neq\underline{ts}(m)$ and
$|M|=3$, then $M$ is isomorphic to
either $M_5$ or $M_6$, depending on whether we have $\underline{s}(m)=\underline{s}(m)+\underline{ts}(m)$ 
or $\underline{ts}(m)=\underline{s}(m)+\underline{ts}(m)$.

Finally, assume that $\underline{s}(M)$ consists of invertible elements. By symmetry,
$\underline{t}(M)$ consist of invertible elements as well. 
From Lemma~\ref{lem4.45} we thus get that
both $\underline{s}$ and $\underline{t}$ act on $M$ as multiplication with $0$. Hence
both $\underline{st}$ and $\underline{ts}$ act on $M$ as multiplication with $0$ as well.
Let $m\in M$ be non-invertible (which exists as $M$ is proper). Then the submonoid 
$N=\{0,m\}$ of $M$ is $R$-invariant and hence, due to the minimality of $M$, we have $M\cong M_2$.
\end{proof}

As an immediate corollary from Theorem~\ref{thm7.2}, we have:

\begin{corollary}\label{thm7.4}
The $R$-semimodules $M_1$, $M_2$ and $M_3$ are the only simple proper $R$-semimodules. 
\end{corollary}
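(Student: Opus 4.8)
The plan is to read the answer directly off Theorem~\ref{thm7.2}. A simple proper $R$-semimodule is by definition both minimal and elementary, so in particular it is a minimal proper $R$-semimodule and hence, by Theorem~\ref{thm7.2}, isomorphic to one of $M_1,\dots,M_6$. It therefore suffices to decide which of these six are elementary; I expect exactly $M_1$, $M_2$, $M_3$ to be, and the task splits into confirming elementarity for these three and exhibiting a non-trivial $R$-congruence on each of $M_4$, $M_5$, $M_6$.

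For the first part, $M_1$, $M_2$ and $M_3$ each have exactly two elements, so they are simple (any two-element semimodule is) and in particular elementary; comparing how $\underline{s}$ and $\underline{w_0}$ act shows that the three are pairwise non-isomorphic, so they genuinely give three distinct simple semimodules, none of which can coincide with the larger $M_4$, $M_5$, $M_6$.

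For the second part I would use a single map. The assignment $(i,j)\mapsto i+j$ is the surjective $R$-homomorphism $M_4\tto M_3$ already noted in Subsection~\ref{s7.5}; its kernel is a non-trivial $R$-congruence, so $M_4$ is not elementary. Since this map sends every non-zero element of $M_4$ to the non-zero element of $M_3$, its kernel contains both the congruence defining $M_5$ (which identifies $(1,0)$ and $(1,1)$) and the one defining $M_6$ (which identifies $(0,1)$ and $(1,1)$). By the universal property of quotients it therefore factors through surjective $R$-homomorphisms $M_5\tto M_3$ and $M_6\tto M_3$; as $M_5$ and $M_6$ have three elements while $M_3$ has two, each of these has a non-trivial kernel, so neither $M_5$ nor $M_6$ is elementary. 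Combining the two parts, $M_1$, $M_2$, $M_3$ are precisely the simple proper $R$-semimodules. The only point requiring genuine care is the factorization claim, namely that the defining congruences of $M_5$ and $M_6$ lie in the kernel of $(i,j)\mapsto i+j$, which is immediate once one observes that this map collapses all non-zero elements to a single element.
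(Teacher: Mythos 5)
Your proposal is correct and is exactly the argument the paper intends: the corollary is stated as ``immediate'' from Theorem~\ref{thm7.2} together with the observations of Subsection~\ref{s7.5} that $M_1$, $M_2$, $M_3$ are two-element (hence simple) while $M_4$ admits the surjection $(i,j)\mapsto i+j$ onto $M_3$. Your only addition is to spell out the factorization of that surjection through $M_5$ and $M_6$ to rule those out as well, a detail the paper leaves implicit, and your verification of it is sound.
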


\begin{theorem}\label{thm7.3}
The $R$-semimodules $M_1$, $M_2$, $M_3$, $M_7$, $M_8$ and $M_9$ are the only elementary proper $R$-semimodules. 
\end{theorem}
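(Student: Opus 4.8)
The plan is to classify elementary proper $R$-semimodules by running, in parallel to the proof of Theorem~\ref{thm7.2}, an analysis based on the action of $\underline{w_0}$, $\underline{s}$ and $\underline{t}$, but replacing every appeal to minimality (``such-and-such subsemimodule must be all of $M$'') by an appeal to elementarity (``such-and-such $R$-congruence must be the equality relation''). First I would record the structural facts that hold for \emph{any} elementary proper semimodule: by Lemma~\ref{lem4.46}, the element $0$ is the only invertible element of $M$, and, since $R$ is finitely generated, the arguments behind Corollary~\ref{corminprosem} show that every element of $M$ is an additive idempotent, so $M$ is a monoid semilattice. The set of non-invertible elements is an ideal, but for an elementary (as opposed to minimal) semimodule I cannot collapse it directly; instead I will use such ideals to build Rees congruences and exploit elementarity to force smallness.

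Next I would analyse the action of $\underline{w_0}$. From \eqref{eq-71}, the image $\underline{w_0}(M)$ is $R$-invariant, and the relation $m \sim n \iff \underline{w_0}(m)=\underline{w_0}(n)$ is the kernel of an $R$-homomorphism, hence an $R$-congruence. If $\underline{w_0}$ is not already the zero map, this congruence together with elementarity should pin down $M$ as one of the ``small'' semimodules; the generic conclusion, as in Theorem~\ref{thm7.2}, is that $\underline{w_0}$ acts as $0$ on $M$, after splitting off the case $M \cong M_1$ where $\underline{w_0}$ acts as the identity. With $\underline{w_0}$ acting as zero, I would then examine $\underline{s}$ and $\underline{t}$. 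The key dichotomy is whether $\underline{s}$ and $\underline{t}$ act by automorphisms (forcing, via Lemma~\ref{lem4.46}, that they act as $0$, yielding $M \cong M_2$) or whether some non-invertible element has non-invertible image. In the latter case the submonoid generated by $\underline{s}(m)$ and $\underline{ts}(m)$ is $R$-invariant and, crucially, elementary semimodules need not equal it — so here I would produce explicit $R$-congruences on $M$ and show each is forced to be trivial, identifying the action with the tables \eqref{eq733} and \eqref{eq734} of $M_7$ and its subsemimodules $M_8$, $M_9$, using Lemma~\ref{lem7.05}.

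The main obstacle is precisely this last case, where minimality and elementarity genuinely diverge. For the minimal classification the generated subsemimodule $N$ \emph{is} $M$, so $M$ has at most four elements and the bookkeeping is short; for elementarity I instead have to show that \emph{no} non-trivial $R$-congruence survives, which requires understanding how $M$ sits inside the larger semimodule $M_7$ rather than being generated by two elements. Concretely, I expect to argue that the ideal structure and the mixing behaviour of $\underline{s}$ and $\underline{t}$ (one sending non-zero elements into one ``column'' of $\mathbf{B}\oplus\mathbf{B}$, the other into the complementary one) force any congruence identifying two distinct elements to propagate, via adding elements and applying $\underline{s},\underline{t}$, to the full relation — exactly the propagation argument carried out for $M_7$ in Lemma~\ref{lem7.05}. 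Verifying that the three candidates $M_7$, $M_8$, $M_9$ exhaust the possibilities, and that the degenerate sub-cases collapse to $M_1$, $M_2$, $M_3$, will be the bulk of the work, but each step reduces to a finite check against \eqref{eqneq5} together with the congruence-propagation already established.
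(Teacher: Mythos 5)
Your overall skeleton matches the paper's proof (use Lemma~\ref{lem4.46} to see that $0$ is the only invertible element, settle the action of $\underline{w_0}$ first and split off $M_1$, then analyse $\underline{s}$ and $\underline{t}$ when $\underline{w_0}$ acts as zero), and your treatment of $\underline{w_0}$ is actually viable and a bit slicker than the paper's: since $\underline{w_0}$ is central by \eqref{eq-71}, the kernel of $m\mapsto \underline{w_0}(m)$ is an $R$-congruence, and if it is the equality relation then the identities $\underline{w_0}\,\underline{w}=k_w\underline{w_0}$ force every $\underline{w}$ to act as multiplication by a positive integer, so every monoid congruence is an $R$-congruence and Theorem~\ref{thm3.2} yields $M\cong M_1$; if it is the full relation, $\underline{w_0}$ acts as zero. (The paper instead runs a Rees-type congruence $\sim_I$ on the ideal $I$ of elements with non-invertible $\underline{w_0}$-image.) The genuine gap is in the main case $\underline{w_0}M=0$: there you must exhibit a concrete $R$-congruence on the \emph{arbitrary} $M$ whose forced triviality bounds $M$, and your proposal never constructs it. The paper's key step (Lemma~\ref{lem7.11}) is the partition of $M$ into $\{0\}$ and the four blocks $B_{\varepsilon\delta}$ recording whether $\underline{s}(m)$, respectively $\underline{t}(m)$, vanishes; proving this partition is an $R$-congruence needs the computation \eqref{eq721} (e.g.\ $\underline{s}\cdot\underline{ts}=\underline{w_0}+\underline{s}$ shows $\underline{s}$ maps $B_{10}$ into $B_{11}$). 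Only after elementarity collapses each block to a singleton do you know $|M|\leq 5$ and that $M$ is a sub-semilattice of \eqref{eq722}, at which point the finite check against \eqref{eqneq5} makes sense. Your substitute --- the congruence-propagation argument of Lemma~\ref{lem7.05} plus ``understanding how $M$ sits inside $M_7$'' --- does not do this job: Lemma~\ref{lem7.05} proves that $M_7$ itself is elementary, and it provides no embedding of an unknown elementary $M$ into $M_7$; without the $B_{\varepsilon\delta}$ congruence, $M$ could a priori be infinite and there is nothing finite to check.

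A secondary flaw: you justify ``every element of $M$ is an additive idempotent'' by ``the arguments behind Corollary~\ref{corminprosem}'', but those arguments use minimality in two essential places (minimality gives $M=kM$, and minimality forces the subsemimodule of idempotents to be all of $M$); neither is available for an elementary semimodule, since elementarity controls congruences, not subsemimodules. The correct source is \cite[Proposition~1.2]{Il}, which the paper invokes for exactly this purpose in the dihedral case (Theorem~\ref{thm9-03}); alternatively, as in the paper's proof of the present theorem, idempotency of the surviving elements simply falls out of the addition table \eqref{eq720} once the blocks are singletons, so the fact need not be assumed up front at all.
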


For a set $X$ and a subset $Y$ of $X$, we denote by $\sim_Y$ the equivalence relation on
$X$ given, for $x,x'\in X$, by
\begin{displaymath}
x\sim_Y x'\quad\text{ if and only if }\quad 
x=x'\text{ or } x,x'\subset Y. 
\end{displaymath}

\begin{proof}
Let $M$ be an elementary proper $R$-semimodule. 
By Lemma~\ref{lem4.46}, the element $0$ is the only invertible element of $M$.
Consider the set 
\begin{displaymath}
I=\{m\in M\,:\, \underline{w_0}(m) \text{ is not invertible}\}. 
\end{displaymath}
For $m\in I$ and $n\in M$ we have $\underline{w_0}(m+n)=\underline{w_0}(m)+\underline{w_0}(n)$
is not invertible as $\underline{w_0}(m)$ is not invertible and, due to commutativity of $M$,
all non-invertible elements of $M$ form an ideal. This means that $I$ is an ideal of $M$.
For $w\in S_3$ and $m\in I$, we have 
\begin{displaymath}
\underline{w_0}(\underline{w}(m))=(\underline{w_0}\underline{w})(m)=
k\underline{w_0}(m),
\end{displaymath}
for some $k\in\mathbb{Z}_{>0}$, using \eqref{eq-71}. This implies that $I$ is $R$-invariant.
Consequently, $\sim_I$ is an $R$-congruence which is, moreover, different from the full
relation as $0\not\in I$. As  $M$ is elementary, it follows that $|I|\leq 1$.

Consider first the case $|I|=1$, say, $I=\{h\}$. Then $h+h=h$ and $\underline{w_0}(h)=h$ by construction.
The computation from the previous paragraph also implies $\underline{w}(h)=h$, for any $w\in S_3$.
Let $J:=M\setminus\{h\}$. Then $\underline{w_0}(m)=0$, for every $m\in J$, in particular, 
$m+n\in J$, for any $m,n\in J$. Furthermore, the computation from the previous paragraph implies 
$\underline{w}(m)\in J$, for any $w\in S_3$ and $m\in J$. For $m\in J$, we have 
\begin{displaymath}
\underline{w_0}(h+m)=\underline{w_0}(h)+\underline{w_0}(m)=h+\underline{w_0}(m)=h, 
\end{displaymath}
which means that $h+m=h$. Consequently, $\sim_J$ is an $R$-congruence on $M$ which is different from the 
full relation. Hence $|J|\leq 1$ and, since $0\in J$, we get $J=\{0\}$.
In this case we have $M\cong M_1$.

Now we consider  the case $|I|=0$. In this case $\underline{w_0}$ acts on $M$ as multiplication by $0$.
Consider the sets
\begin{gather*}
B_{00}:=\{m\in M\setminus\{0\}\,:\, \underline{s}(m) = 0 \text{ and }\underline{t}(m)= 0\},\\ 
B_{01}:=\{m\in M\,:\, \underline{s}(m) = 0 \text{ and }\underline{t}(m)\neq 0\},\\ 
B_{10}:=\{m\in M\,:\, \underline{s}(m) \neq  0 \text{ and }\underline{t}(m)= 0\},\\ 
B_{11}:=\{m\in M\,:\, \underline{s}(m) \neq 0 \text{ and }\underline{t}(m)\neq 0\}. 
\end{gather*}
Let $\sim$ denote the equivalence relation on $M$ with equivalence classes
$\{0\}$, $B_{00}$, $B_{01}$, $B_{10}$ and $B_{11}$.

\begin{lemma}\label{lem7.11}
The equivalence relation $\sim$ is an $R$-congruence on $M$. 
\end{lemma}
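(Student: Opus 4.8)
The plan is to verify the two defining properties of an $R$-congruence separately: that $\sim$ is a congruence for the underlying monoid, and that it is compatible with the $R$-action. For the action, since $R$ is generated as a $\mathbb{Z}_{\geq 0}$-semiring by the Kazhdan--Lusztig elements $\underline{w}$ and the action is additive in its first argument, it suffices to check compatibility with each generator separately: if $\underline{w}(m)\sim\underline{w}(n)$ for every $w\in S_3$ whenever $m\sim n$, then $r(m)\sim r(n)$ for every $r\in R$ follows by writing $r=\sum_w c_w\underline{w}$ and using that $\sim$, once it is known to be a monoid congruence, respects sums and non-negative integer multiples.

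The main structural input will be Lemma~\ref{lem4.46}: as $M$ is a proper elementary semimodule, $0$ is its only invertible element, so $x+y=0$ forces $x=y=0$, and in particular $2x=0$ forces $x=0$. I would record the two indicator quantities $\sigma(m):=[\underline{s}(m)\neq 0]$ and $\tau(m):=[\underline{t}(m)\neq 0]$, which together with the flag ``$m=0$'' determine the $\sim$-class of $m$. Combining the previous observation with $\underline{s}(x+y)=\underline{s}(x)+\underline{s}(y)$ gives $\sigma(x+y)=\sigma(x)\vee\sigma(y)$, and likewise for $\tau$. The monoid-congruence part is then immediate: if $m\sim n$ with $m\neq n$ (so both lie in one common class $B_{ij}$, are non-zero, and satisfy $\sigma(m)=\sigma(n)$ and $\tau(m)=\tau(n)$) and $k\in M$ is arbitrary, then $m+k$ and $n+k$ are both non-zero and satisfy $\sigma(m+k)=\sigma(m)\vee\sigma(k)=\sigma(n+k)$ and similarly for $\tau$, whence they lie in the same class.

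For action-compatibility I would first extract from the multiplication table \eqref{eq-71}, using that $\underline{w_0}$ acts as $0$ in the present case $|I|=0$, the two cross-identities $\underline{s}(m)=0\iff\underline{ts}(m)=0$ and $\underline{t}(m)=0\iff\underline{st}(m)=0$; here the less obvious implications come from $\underline{s}\,\underline{ts}=\underline{s}+\underline{w_0}$ and $\underline{t}\,\underline{st}=\underline{t}+\underline{w_0}$, which collapse to $\underline{s}$ and $\underline{t}$ once $\underline{w_0}$ is killed. With these in hand, one computes for each generator the pair $(\sigma(\underline{w}(m)),\tau(\underline{w}(m)))$ directly from the products $\underline{s}\,\underline{w}$ and $\underline{t}\,\underline{w}$ read off \eqref{eq-71}, together with $[2x\neq 0]=[x\neq 0]$; the upshot is that each $\underline{w}(m)$ is either $0$ or lies in $B_{11}$, governed only by $\sigma(m)$ or $\tau(m)$ (for instance $\underline{s}(m)$ and $\underline{ts}(m)$ vanish exactly when $\sigma(m)=0$ and otherwise land in $B_{11}$, while $\underline{t}(m)$ and $\underline{st}(m)$ are controlled by $\tau(m)$). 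In every case the class of $\underline{w}(m)$ depends only on the class of $m$, which gives compatibility with the action and finishes the verification. I expect the main obstacle to be isolating precisely these cross-identities: the classes are defined only through $\underline{s}$ and $\underline{t}$, yet the actions of the composite generators $\underline{st}$ and $\underline{ts}$ must be brought under control, and this is exactly where the reduction $\underline{w_0}\to 0$ in \eqref{eq-71} does the work.
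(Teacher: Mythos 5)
Your proof is correct and takes essentially the same route as the paper: your cross-identities are precisely the paper's computation \eqref{eq721} (the collapse of $\underline{s}\cdot\underline{ts}=\underline{s}+\underline{w_0}$ to $\underline{s}$ once $\underline{w_0}$ acts as zero), and your appeal to Lemma~\ref{lem4.46} to ensure that a sum with a non-zero summand is non-zero, together with $2x\neq 0$ for $x\neq 0$, is exactly what underlies the paper's case analysis. The only difference is presentational: your indicator functions $\sigma,\tau$ and the generator-by-generator reduction compress what the paper carries out as an explicit class-by-class check culminating in the addition table \eqref{eq720}.
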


\begin{proof}
Assume that $n\in B_{00}$ and $m\in M$. If $m\in B_{00}$, then we have
\begin{displaymath}
\underline{s}(n+m)=\underline{s}(n)+\underline{s}(m)=0+0=0 
\end{displaymath}
and, similarly, $\underline{t}(n+m)=0$. Hence $n+m\in B_{00}$.
If $m\in B_{01}$, then we have $\underline{s}(n+m)=0$ by the previous computation and
\begin{displaymath}
\underline{t}(n+m)=\underline{t}(n)+\underline{t}(m)=0+\underline{t}(m)=\underline{t}(m)\neq 0. 
\end{displaymath}
Hence $n+m\in B_{01}$. Analogously one shows that $m\in B_{10}$ implies $n+m\in B_{10}$
and, further, that $m\in B_{11}$ implies $n+m\in B_{11}$.

Proceeding with $n\in B_{10}$, $n\in B_{01}$ and  $n\in B_{11}$, one checks that $\sim$ is
a congruence on $M$ the quotient by which is a commutative monoid with the following addition table:
\begin{equation}\label{eq720}
\begin{array}{c||c|c|c|c|c}
+& 0 & B_{00} & B_{10}& B_{01}& B_{11}\\
\hline\hline
0& 0 & B_{00} & B_{10}& B_{01}& B_{11}\\
\hline
B_{00}& B_{00} & B_{00} & B_{10}& B_{01}& B_{11}\\
\hline
B_{10}& B_{10} & B_{10} & B_{10}& B_{11}& B_{11}\\
\hline
B_{01}& B_{01} & B_{01} & B_{11}& B_{01}& B_{11}\\
\hline
B_{11}& B_{11} & B_{11} & B_{11}& B_{11}& B_{11}  
\end{array}
\end{equation}

Now let us check that $\sim$ is $R$-invariant. That  $B_{00}$ is sent by $R$ to $0$
follows directly from the definition. Consider $B_{10}$. Then $\underline{t}$ sends it
to $0$. We claim that $\underline{s}$ sends it to $B_{11}$. Indeed, let $m\in B_{10}$.
Then $\underline{s}(\underline{s}(m))=2\underline{s}(m)\neq 0$. 
Assume $\underline{t}(\underline{s}(m))=0$. Then, as $\underline{w_0}$ acts on $M$ as zero, we have
\begin{equation}\label{eq721}
0= \underline{s}(\underline{ts}(m))=(\underline{s}\cdot \underline{ts})(m)=
(\underline{w_0}+\underline{s})(m)=\underline{w_0}(m)+\underline{s}(m)=0+\underline{s}(m)=\underline{s}(m)\neq 0,
\end{equation}
a contradiction. Hence $\underline{t}(\underline{s}(m))\neq 0$ meaning that $\underline{s}(m)\in B_{11}$.
Similarly one shows that $\underline{s}$ sends $B_{01}$ to $0$ while 
$\underline{t}$ sends $B_{01}$ to $B_{11}$.

Finally, let $m\in B_{11}$ and consider $\underline{s}(m)\neq 0$. Then, again,
$\underline{s}(\underline{s}(m))=2\underline{s}(m)\neq 0$ and, moreover, 
the computation \eqref{eq721} implies $\underline{t}(\underline{s}(m))\neq 0$.
Therefore $\underline{s}(m)\in B_{11}$. Analogously one shows that $\underline{t}(m)\in B_{11}$.
The claim of the lemma follows.
\end{proof}

Note that $\sim$ is different from the full relation as $M$ is proper. Therefore $\sim$ must be
the equality relation due to the fact that $M$ is elementary. This means that $|B_{\varepsilon\delta}|\leq 1$,
for all $\varepsilon,\delta\in\{0,1\}$. If some $B_{\varepsilon\delta}$ is non-empty, we 
set $B_{\varepsilon\delta}=\{b_{\varepsilon\delta}\}$. From \eqref{eq720} it follows that
each such $b_{\varepsilon\delta}$ is an idempotent. Therefore $M$ is a sub-semi-lattice of the
following meet-semilattice, the Hasse diagram of which is depicted by the solid lines,
where the action of $\underline{s}$ is depicted by the dashed arrows while 
the action of $\underline{t}$ is depicted by the dotted arrows.
\begin{equation}\label{eq722}
\xymatrix{
&&0\ar@{-}[d]\ar@{-->}@(l,ul)[]\ar@{.>}@(r,ur)[]&&\\
&&B_{00}\ar@{-}[drr]\ar@{-}[dll]\ar@{.>}@/_/[u]\ar@{-->}@/^/[u]&&\\
B_{10}\ar@{-}[drr]\ar@{-->}@/_/[drr]\ar@{.>}@/^/[uurr]
&&&&B_{01}\ar@{-}[dll]\ar@{-->}@/_/[uull]\ar@{.>}@/^/[dll]\\
&&B_{11}\ar@{.>}@(l,dl)[]\ar@{-->}@(r,dr)[]&&\\
}
\end{equation}

Since $M$ is proper, we have $B_{00}\cup B_{10}\cup B_{01}\cup B_{11}\neq\varnothing$.
If $B_{10}\cup B_{01}\cup B_{11}=\varnothing$, then $M$ is isomorphic to $M_2$.
If $B_{10}\cup B_{01}\cup B_{11}\neq\varnothing$, then from \eqref{eq722} one sees that
$\sim_{C}$, where $C=\{0\}\cup B_{00}$, is an $R$-congruence on $M$ different from the
full relation. As $M$ is elementary, it follows that $B_{00}=\varnothing$, which 
we assume from now on.

If $B_{10}\cup B_{01}=\varnothing$, then $M$ is isomorphic to $M_3$. 
If $B_{10}\cup B_{01}\neq \varnothing$, then $B_{11}\neq\varnothing$ as $B_{11}$
contains the image of $B_{10}$ under $\underline{s}$ and the image of $B_{01}$ under $\underline{t}$.
So, if both $B_{10}$ and $B_{01}$ are non-empty, $M$ is isomorphic to $M_7$.
If $B_{01}$ is empty, $M$ is isomorphic to $M_8$.
If $B_{10}$ is empty, $M$ is isomorphic to $M_9$.
This completes the proof.
\end{proof}

\section{Finitely generated $\mathbb{Z}_{\geq 0}$-semirings}\label{s8}

\subsection{Basic structure theory}\label{s8.1}

In this section we assume that $R$ is a finitely generated $\mathbb{Z}_{\geq 0}$-semiring in the sense that
$R$ contains a finite $\mathbb{Z}_{\geq 0}$-basis $\mathbf{r}=(r_1,r_2,\dots,r_k)$. Then every element in 
$R$ can be uniquely written as a linear combination of elements in $\mathbf{r}$ with coefficients in 
$\mathbb{Z}_{\geq 0}$.

\begin{lemma}\label{lem8-1}
A finitely generated $\mathbb{Z}_{\geq 0}$-semiring contains a unique $\mathbb{Z}_{\geq 0}$-basis.
\end{lemma}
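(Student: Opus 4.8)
The plan is to give an intrinsic, basis-free description of the basis elements purely in terms of the additive monoid $(R,+,0)$, from which uniqueness is immediate. Call a nonzero element $x\in R$ \emph{additively indecomposable} if every equation $x=a+b$ with $a,b\in R$ forces $a=0$ or $b=0$. Since this notion refers only to the commutative monoid $(R,+,0)$ and not to any choice of basis, it suffices to show that, for an arbitrary $\mathbb{Z}_{\geq 0}$-basis $\mathbf{r}=(r_1,\dots,r_k)$, the underlying set $\{r_1,\dots,r_k\}$ coincides with the set of additively indecomposable elements of $R$. Any two bases will then have the same underlying set, and the lemma follows.

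First I would record that the $r_i$ are distinct and nonzero: if $r_i=0$ for some $i$, or $r_i=r_j$ for some $i\neq j$, then the element in question would admit two distinct $\mathbb{Z}_{\geq 0}$-coefficient vectors, contradicting uniqueness of representation. Next I would check that each $r_j$ is additively indecomposable. Writing a decomposition $r_j=a+b$ in the basis as $a=\sum_i a_ir_i$ and $b=\sum_i b_ir_i$, uniqueness of representation applied to $r_j=\sum_i(a_i+b_i)r_i$ yields $a_j+b_j=1$ and $a_i+b_i=0$ for all $i\neq j$. Hence exactly one of $a,b$ equals $r_j$ while the other is $0$, so $r_j$ is indeed indecomposable.

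For the converse, let $x=\sum_i c_ir_i$ be additively indecomposable. Then $x\neq 0$, so the coefficient sum $d:=\sum_i c_i$ satisfies $d\geq 1$. If $d\geq 2$, choose an index $j$ with $c_j\geq 1$ and split $x=r_j+x'$, where $x'=\sum_i c_i'r_i$ has the coefficient vector obtained from $(c_i)$ by decreasing $c_j$ by one; these coefficients remain in $\mathbb{Z}_{\geq 0}$. Both summands are nonzero, the second because its coefficient sum is $d-1\geq 1$, contradicting indecomposability of $x$. Therefore $d=1$ and $x=r_j$ for some $j$. This establishes that the two sets coincide, and hence the basis is unique as a set.

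The argument uses only the free commutative-monoid structure of $R$ and never invokes the multiplication, so the semiring structure plays no role. There is no serious obstacle; the only point demanding a little care is the converse direction, where one must guarantee that the splitting $x=r_j+x'$ produces two genuinely nonzero summands, which is ensured by tracking the coefficient sum.
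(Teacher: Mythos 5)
Your proof is correct, and it takes a genuinely different route from the paper's. The paper compares an arbitrary basis against the fixed one directly: for $v=\sum_i a_i r_i$ it introduces $\mathrm{size}(v)$, the number of non-zero coordinates, observes that $\mathrm{size}(v+w)\geq\max(\mathrm{size}(v),\mathrm{size}(w))$ and $\mathrm{size}(av)=\mathrm{size}(v)$ for $a\in\mathbb{Z}_{>0}$, and concludes that any element of another basis occurring with non-zero coefficient in an expansion of $r_i$ has size one and hence equals $r_i$; thus each $r_i$ lies in every basis, which suffices. You instead give an intrinsic, basis-free characterization: the basis is exactly the set of additively indecomposable elements of the commutative monoid $(R,+,0)$, using the total coefficient sum, rather than the support size, as the numerical invariant. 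Both arguments exploit only the free additive monoid structure and ignore multiplication entirely, as you note. What your version buys is a canonical description of the basis as the set of additive atoms (which transports verbatim to any free $\mathbb{Z}_{\geq 0}$-semimodule) and a fully explicit final step: the paper's jump from $\mathrm{size}(x_j)=1$, i.e.\ $x_j=ar_i$ with $a\in\mathbb{Z}_{>0}$, to $x_j=r_i$ tacitly needs the scalar argument $c_ja=1$, whereas your coefficient-sum bookkeeping handles this automatically. The paper's version is marginally shorter, since it never needs your first direction (that basis elements are themselves indecomposable), only the containment of the fixed basis in any other one.
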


\begin{proof}
It is enough to argue that each $r_i$ must be in any  $\mathbb{Z}_{\geq 0}$-basis. For this it is enough to show
that, if we write $r_i$ as a linear combination of some (different) elements $x_1,x_2,\dots,x_m$ of $R$ with
coefficients in $\mathbb{Z}_{\geq 0}$, then $x_j=r_i$, for some $j$, the coefficient at $x_j$ is $1$
and all other coefficients are zero. 

For an element $v=\sum_i a_i r_i$, where all $a_i\in \mathbb{Z}_{\geq 0}$, we will call the number 
of non-zero $a_i$ the {\em size} of $v$ and denote it by $\mathrm{size}(v)$. Clearly, we have 
\begin{displaymath}
\mathrm{size}(v+w)\geq \max(\mathrm{size}(v),\mathrm{size}(w)).
\end{displaymath}
Furthermore, if $a\in \mathbb{Z}_{>0}$, we also have 
\begin{displaymath}
\mathrm{size}(av)=\mathrm{size}(v).
\end{displaymath}
Consequently, as $\mathrm{size}(r_i)=1$, if $r_i$ is a linear combination of $x_1,x_2,\dots,x_m$  with
coefficients in $\mathbb{Z}_{\geq 0}$ and the coefficient at some $x_j$ is non-zero, then
$\mathrm{size}(x_j)=1$ and hence $x_j=r_i$ due to the fact that $\mathbf{r}$ is a $\mathbb{Z}_{\geq 0}$-basis.
The claim follows.
\end{proof}

\begin{example}\label{ex8-2}
{\rm 
Let $A$ be a finite dimensional algebra over a field $\Bbbk$. Consider the category $A$-proj of
finitely generated projective $A$-modules. Assume that $A$-proj has the structure of a tensor category with respect to
some biadditive tensor product bifunctor $\otimes$. Then $\oplus$ and $\otimes$ induce the natural
structure of a finitely generated $\mathbb{Z}_{\geq 0}$-semiring on the set of
isomorphism classes of objects in $A$-proj.
}
\end{example}

\subsection{Cells}\label{s8.2}

This subsection adjusts \cite[Subsection 3.2]{KM} to the setup of finitely generated $\mathbb{Z}_{\geq 0}$-semirings.

Let $R$ be a finitely generated $\mathbb{Z}_{\geq 0}$-semiring and 
$\mathbf{r}=\{r_1,r_2,\dots,r_k\}$ its unique $\mathbb{Z}_{\geq 0}$-basis.
Define a partial pre-order $\leq_L$ on the set $\{r_1,r_2,\dots,r_k\}$
as follows: $r_i\leq_L r_j$ provided that there is some $r\in R$ such that 
the coefficient at $r_j$ in $rr_i$ is non-zero. The pre-order $\leq_L$
is called the {\em left} pre-order. The equivalence classes of 
$\leq_L$ are called {\em left cells}. We write $r_i\sim_L r_j$
provided that $r_i\leq_Lr_j$ and $r_j\leq_L r_i$.
Using multiplication with $r$ on the right, one similarly define 
the {\em right} pre-order $\leq_R$ and {\em right cells} corresponding
to the equivalence relation $\sim_R$.
Using multiplication with $r$ and $r'$ on both sides, one similarly define 
the {\em two-sided} pre-order $\leq_J$ and {\em two-sided cells}
corresponding to the equivalence relation $\sim_J$.

The intersection of a left and a right cell is called an {\em $\mathcal{H}$-cell},
following \cite{Gr}.

A two-sided cell $\mathcal{J}$ is called {\em strongly regular} if the
intersection of any left and any right cell inside $\mathcal{J}$
is a singleton. A strongly regular left (resp. right) cell is a left 
(resp. right) cell which belongs to a strongly regular two-sided cell.
A two-sided cell $\mathcal{J}$ is called {\em idempotent} if
it contains (not necessarily distinct) elements $x,y,z$ such that 
$xy$ has a non-zero coefficient at $z$. A two-sided cell which is
not idempotent is called {\em nilpotent}. By \cite[Corollary~19]{KM}, no two
left (or two right) cells inside an idempotent two-sided cell
can  be comparable with respect to the left (resp. right) order.

Cells are important to understand annihilators of some semimodules,
as demonstrated, for example, in the next result.

\begin{lemma}\label{lem8-21}
Let $R$ be a finitely generated $\mathbb{Z}_{\geq 0}$-semiring
and $M$ a proper $R$-se\-mi\-mo\-du\-le in which every element is idempotent.
Let $\mathcal{J}$ be a two-sided cell in $R$. Then either  
all elements in $\mathcal{J}$ annihilate $M$ or none of them does.
\end{lemma}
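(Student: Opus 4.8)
The plan is to reduce the whole statement to a purely combinatorial condition on the basis $\mathbf{r}$, exploiting the hypothesis that every element of $M$ is an additive idempotent. First I would record the key consequence of that hypothesis. Writing an arbitrary $r\in R$ in the basis as $r=\sum_i a_i r_i$, the semimodule axioms give $r(m)=\sum_i (a_i r_i)(m)$, and since $r_i(m)+r_i(m)=r_i(m)$ in $M$, each summand collapses to $r_i(m)$ whenever $a_i>0$ and to $0_M$ when $a_i=0$. Thus $r(m)=\sum_{i\in\mathrm{supp}(r)} r_i(m)$, where $\mathrm{supp}(r)=\{i:a_i>0\}$: the multiplicities are irrelevant and only the support of $r$ matters.

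Next I would observe that in the semilattice $M$ a finite sum vanishes if and only if every summand vanishes. Indeed, if $S=\sum_i x_i=0_M$, then for each $j$ we have $x_j=x_j+0_M=x_j+S=S=0_M$, using commutativity and $x_j+x_j=x_j$ to absorb $x_j$ into $S$. Combining this with the previous paragraph, an element $r\in R$ annihilates $M$ if and only if every basis element $r_i$ with $i\in\mathrm{supp}(r)$ annihilates $M$. In particular, annihilation of $M$ is entirely governed by which basis elements annihilate $M$.

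With this reduction in hand, the cell dichotomy is immediate. Suppose some $r_i\in\mathcal{J}$ annihilates $M$ and let $r_j\in\mathcal{J}$ be arbitrary. Since $r_j\sim_J r_i$ gives $r_j\leq_J r_i$, there exist $r,r'\in R$ such that $r_j$ occurs with non-zero coefficient in $r\,r_i\,r'$, i.e. $j\in\mathrm{supp}(r\,r_i\,r')$. Using the axiom $(r\,r_i\,r')(m)=r\big(r_i(r'(m))\big)$ together with $r_i(r'(m))=0_M$ and $r(0_M)=0_M$, the element $r\,r_i\,r'$ annihilates $M$. By the support criterion of the previous paragraph, every basis element in $\mathrm{supp}(r\,r_i\,r')$ annihilates $M$; in particular $r_j$ does. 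Hence if one element of $\mathcal{J}$ annihilates $M$, all of them do, which is exactly the stated dichotomy.

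The argument is short once the right viewpoint is adopted, so I do not expect a serious obstacle; the only step requiring care, and the one I would single out as the crux, is the reduction of annihilation to a support-only condition. This is precisely where idempotency of $M$ is used twice: once to discard multiplicities in the action of $r$, and once to pass from a vanishing sum to vanishing summands. Everything after that is a one-line application of the definition of $\leq_J$, and notably no use of properness of $M$ beyond idempotency is needed.
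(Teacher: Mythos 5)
Your proof is correct and takes essentially the same route as the paper's: both pass from the annihilating basis element $r_i$ to an element $r\,r_i\,r'$ whose support contains $r_j$, and then invoke the semilattice fact that a vanishing sum of idempotents forces every summand to vanish (the paper compresses this into the single clause ``the sum of a non-zero element in $M$ and any element in $M$ is non-zero''), your version merely making the support reduction and the absorption argument explicit. One cosmetic point: under the paper's convention it is $r_i\leq_J r_j$, not $r_j\leq_J r_i$, that says $r_j$ appears with non-zero coefficient in some $r\,r_i\,r'$ --- harmless here since $r_i\sim_J r_j$ gives both inequalities --- and you are right that properness of $M$ plays no role in the argument.
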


\begin{proof}
Assume that $r_i\cdot M=0$, for some  $r_i\in\mathcal{J}$, and let
$r_j\in\mathcal{J}$. Then there are $a,b\in R$ such that $ar_ib$
has a non-zero coefficient at $r_j$. Clearly, $ar_ib\cdot M=0$.
As the sum of a non-zero element in $M$ and any element in $M$ is non-zero,
it follows that $r_j\cdot M=0$.
\end{proof}

\subsection{Cell semimodules}\label{s8.3}

Define on $R$ an equivalence relation $\rho$ as follows:
\begin{displaymath}
\left(r=\sum_{i=1}^ka_i r_i\right)\,\rho\, 
\left(r'=\sum_{i=1}^ka'_i r_i\right)
\end{displaymath}
if and only if $a_i\neq 0$ is equivalent to $a'_i\neq 0$, for all $i$.
It is easy to see that $\rho$ is a congruence on $R$. The quotient $\tilde{R}:=R/\rho$
is a $\mathbf{B}$-semiring with the unique basis given by the $\rho$-classes
of the elements in $\mathbf{r}$. The natural projection $R\tto \tilde{R}$ is a
homomorphism of semi-rings. As explained in \cite[Subsection~2.1]{KuM}, the semiring $\tilde{R}$
can be identified with the multisemigroup structure which it induces 
on the set $\{r_1,r_2,\dots,r_k\}$. Abusing notation, we will identify 
the elements $r_i$ with their classes in $\tilde{R}$.

Let $\mathcal{L}$ be a left cell of $\mathbf{r}$. Let $C_{\mathcal{L}}$ be the additive
submonoid of $\tilde{R}$ generated by all $r_i\in\mathcal{L}$. We denote by $\pi_{\mathcal{L}}$
the natural projection of $\tilde{R}$ onto $C_{\mathcal{L}}$. Then we have
\begin{displaymath}
\pi_{\mathcal{L}}(\sum_{i=1}^ka_i r_i) =
\sum_{r_i\in\mathcal{L}}a_i r_i.
\end{displaymath}
For $r\in\tilde{R}$ and $x\in C_{\mathcal{L}}$, set 
\begin{equation}\label{eq8-3.1}
r\cdot x:=\pi_{\mathcal{L}}(rx). 
\end{equation}

\begin{lemma}\label{lem8-3}
Formula~\ref{eq8-3.1} defines on $C_{\mathcal{L}}$ the structure of an
$\tilde{R}$-semimodule.
\end{lemma}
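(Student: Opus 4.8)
The plan is to verify the six left-semimodule axioms listed in Subsection~\ref{s2.2}, the only substantial one being associativity of the action. First I would record the two elementary facts about $\pi_{\mathcal{L}}$ on which everything rests. Since $\tilde{R}$ is a $\mathbf{B}$-semiring with basis $\{r_1,\dots,r_k\}$, an element of $\tilde{R}$ is determined by its support, addition is union of supports, and $\pi_{\mathcal{L}}$ simply restricts the support to $\mathcal{L}$. Consequently $\pi_{\mathcal{L}}$ is an additive monoid homomorphism with $\pi_{\mathcal{L}}(0)=0$, and $\pi_{\mathcal{L}}$ restricts to the identity map on $C_{\mathcal{L}}$.

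With these facts in hand, the axioms that do not involve a product of two elements of $\tilde{R}$ are immediate. Using additivity of $\pi_{\mathcal{L}}$ together with distributivity of multiplication in $\tilde{R}$, one gets $r(x+y)=\pi_{\mathcal{L}}(rx+ry)=r(x)+r(y)$ and $(r+s)(x)=\pi_{\mathcal{L}}(rx+sx)=r(x)+s(x)$; the equalities $r(0)=0$ and $0(x)=0$ hold because $\pi_{\mathcal{L}}(0)=0$; and $1(x)=\pi_{\mathcal{L}}(1\cdot x)=\pi_{\mathcal{L}}(x)=x$ since $x\in C_{\mathcal{L}}$ and $\pi_{\mathcal{L}}$ is the identity there.

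The hard part is the associativity axiom $(rs)(x)=r(s(x))$, and this is precisely where the left-cell hypothesis on $\mathcal{L}$ must be used. Writing $w:=sx$ and splitting $w=\pi_{\mathcal{L}}(w)+w'$, where $w'$ collects the part of $w$ whose support lies outside $\mathcal{L}$, distributivity gives
\[
(rs)(x)=\pi_{\mathcal{L}}(rw)=\pi_{\mathcal{L}}\big(r\,\pi_{\mathcal{L}}(w)\big)+\pi_{\mathcal{L}}(rw')=r(s(x))+\pi_{\mathcal{L}}(rw').
\]
So it suffices to prove $\pi_{\mathcal{L}}(rw')=0$, i.e. that the support of $rw'$ is disjoint from $\mathcal{L}$. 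I would argue this purely from the left pre-order. Every basis element $r_j$ in the support of $w=sx$ occurs in some product $s\,r_i$ with $r_i\in\mathcal{L}$, whence $r_i\leq_L r_j$; if moreover $r_j\notin\mathcal{L}$, then $r_j\not\sim_L r_i$, so the inequality is strict, $r_i\leq_L r_j$ with $r_i\not\sim_L r_j$. Thus every basis element in the support of $w'$ is strictly above some element of $\mathcal{L}$ in $\leq_L$.

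Multiplication by $r$ can only move elements up the left pre-order: any $r_m$ in the support of $r\,r_j$ satisfies $r_j\leq_L r_m$. Combining $r_j\leq_L r_m$ with $r_i\leq_L r_j$ yields $r_i\leq_L r_m$, and a short transitivity check rules out $r_m\sim_L r_i$ (otherwise $r_j\leq_L r_m\leq_L r_i\leq_L r_j$ would force $r_i\sim_L r_j$, contradicting strictness). Hence $r_m\not\sim_L r_i$, so $r_m\notin\mathcal{L}$. Therefore no basis element of $\mathcal{L}$ appears in the support of $rw'$, giving $\pi_{\mathcal{L}}(rw')=0$ and completing the verification of associativity, and with it the lemma. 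The main obstacle is thus isolated in this last step, where one must confirm that the terms of $sx$ lying outside $\mathcal{L}$ can never be pulled back into $\mathcal{L}$ by a further left multiplication—a statement that fails for an arbitrary subset of $\mathbf{r}$ but holds because $\mathcal{L}$ is a full $\sim_L$-class.
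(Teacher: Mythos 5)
Your proof is correct and takes essentially the same route as the paper's: the paper also reduces the lemma to the associativity axiom and observes that any basis element $r_j$ appearing in $rx$ satisfies $r_j\geq_L\mathcal{L}$, and that if $r_j>_L\mathcal{L}$ strictly then $\pi_{\mathcal{L}}(r'r_j)=0$ for all $r'\in\tilde{R}$, so such terms do not affect the action. Your explicit transitivity check (that left multiplication only moves basis elements up in $\leq_L$, and that a strictly-above element can never return to the $\sim_L$-class $\mathcal{L}$) simply spells out the one-line observation the paper leaves implicit.
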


\begin{proof}
Let $r\in\tilde{R}$ and $x\in C_{\mathcal{L}}$.
If $r_j$ appears with a non-zero coefficient in the expression of $rx$,
then $r_j\geq_L \mathcal{L}$. In case $r_j> \mathcal{L}$, we have 
$\pi_{\mathcal{L}}(r'r_j)=0$, for all $r'\in \tilde{R}$, and hence such $r'r_j$ 
has no affect on the left hand side of \ref{eq8-3.1}. The claim follows. 
\end{proof}

Pulling back via the homomorphism $R\tto \tilde{R}$, the monoid $C_{\mathcal{L}}$
becomes an $R$-semimodule. For both semirings $R$ and $\tilde{R}$, we call
the semimodule $C_{\mathcal{L}}$ the {\em cell semimodule} corresponding to $\mathcal{L}$.

This construction can be compared with similar constructions of various types
of ``cell modules'' in \cite{GM,KL,MM,KM}.

\subsection{Minimality of cell semimodules for strongly regular cells}\label{s8.5}

\begin{theorem}\label{thm8-4}
Let $R$ be a finitely generated $\mathbb{Z}_{\geq 0}$-semiring 
and $\mathcal{L}$ a left cell in $R$
contained in an idempotent strongly regular two-sided cell. 
Then the  $R$-semimodule $C_{\mathcal{L}}$ is minimal.
\end{theorem}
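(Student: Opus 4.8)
The plan is to exploit the rectangular (Rees-matrix) structure of an idempotent strongly regular two-sided cell and reduce the whole statement to one explicit computation with basis elements. Write $\mathcal{J}$ for the idempotent strongly regular two-sided cell containing $\mathcal{L}$, and note that every element of $C_{\mathcal{L}}$ is idempotent, since $\tilde{R}$ is a $\mathbf{B}$-semiring and addition in $C_{\mathcal{L}}$ is idempotent. By strong regularity, for each right cell $\mathcal{R}'$ of $\mathcal{J}$ the intersection $\mathcal{L}\cap\mathcal{R}'$ is a single basis element, which I denote $d_{\mathcal{R}'}$; thus the elements of $C_{\mathcal{L}}$ are exactly the Boolean sums $\sum_{\mathcal{R}'\in T}d_{\mathcal{R}'}$ indexed by sets $T$ of right cells of $\mathcal{J}$, and $C_{\mathcal{L}}\neq\{0\}$. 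I will prove minimality by showing that the subsemimodule generated by any single nonzero $x\in C_{\mathcal{L}}$ already contains every $d_{\mathcal{R}''}$, hence, by closure under addition, all of $C_{\mathcal{L}}$.

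The key structural input, imported from the cell theory of \cite{KM,KuM} (using in particular that distinct left cells inside $\mathcal{J}$ are incomparable in $\leq_L$ by \cite[Corollary~19]{KM}), is the following product rule. For two basis elements $c,d\in\mathcal{J}$ the product $cd$ in $\tilde{R}$ has at most one summand lying in $\mathcal{J}$, and if present that summand lies in $\mathcal{R}(c)\cap\mathcal{L}(d)$, where $\mathcal{R}(\cdot)$ and $\mathcal{L}(\cdot)$ denote the right and left cells. Applying $\pi_{\mathcal{L}}$ then kills everything outside $\mathcal{L}$: summands in two-sided cells below $\mathcal{J}$ are not in $\mathcal{L}$, while by Corollary~19 any summand of $cd$ that lies in $\mathcal{J}$ and is $\geq_L\mathcal{L}$ must in fact lie in $\mathcal{L}$. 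Consequently, for $d=d_{\mathcal{R}'}\in\mathcal{L}$ and a basis element $c\in\mathcal{R}''\cap\mathcal{L}''$ of $\mathcal{J}$, the action in $C_{\mathcal{L}}$ (well defined by Lemma~\ref{lem8-3}) is $c\cdot d_{\mathcal{R}'}=\pi_{\mathcal{L}}(c\,d_{\mathcal{R}'})\in\{0,d_{\mathcal{R}''}\}$, and it equals $d_{\mathcal{R}''}$ precisely when the unique element of $\mathcal{R}'\cap\mathcal{L}''$ is an idempotent.

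Granting this, the computation is short. Because $\mathcal{J}$ is idempotent its principal factor is completely $0$-simple, hence regular, so every right cell of $\mathcal{J}$ contains an idempotent. Fix $x=\sum_{\mathcal{R}'\in T}d_{\mathcal{R}'}\neq0$, choose $\mathcal{R}_0\in T$, and let $e\in\mathcal{R}_0\cap\mathcal{L}''$ be an idempotent; then the idempotency condition for $(\mathcal{R}_0,\mathcal{L}'')$ holds, so $e\cdot d_{\mathcal{R}_0}=d_{\mathcal{R}_0}$. Now let $\mathcal{R}''$ be an arbitrary right cell of $\mathcal{J}$; by strong regularity $\mathcal{R}''\cap\mathcal{L}''$ is a single basis element $c$. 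By the product rule $c\cdot d_{\mathcal{R}'}$ equals $d_{\mathcal{R}''}$ for every $\mathcal{R}'\in T$ satisfying the idempotency condition for $(\mathcal{R}',\mathcal{L}'')$ and equals $0$ otherwise; in particular the summand for $\mathcal{R}_0$ is $d_{\mathcal{R}''}$. Since addition in $C_{\mathcal{L}}$ is idempotent and every nonzero summand equals the same element $d_{\mathcal{R}''}$, we get $c\cdot x=\sum_{\mathcal{R}'\in T}c\cdot d_{\mathcal{R}'}=d_{\mathcal{R}''}$. Letting $\mathcal{R}''$ range over all right cells of $\mathcal{J}$ shows that the subsemimodule generated by $x$ contains every $d_{\mathcal{R}''}$, hence equals $C_{\mathcal{L}}$, proving minimality.

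I expect the main obstacle to be a clean justification of the product rule used in the second paragraph: one must check, for the multisemigroup $\tilde{R}$ rather than for an ordinary semigroup, that the principal factor of an idempotent cell is genuinely of completely $0$-simple type and that strong regularity forces the $\mathcal{H}$-cells to be singletons with the stated matrix-unit multiplication, so that $\pi_{\mathcal{L}}(c\,d)$ is always either $0$ or a single $d_{\mathcal{R}''}$ lying in $\mathcal{R}(c)\cap\mathcal{L}$. Once this is in place, the remaining ingredients, namely the existence of an idempotent in each right cell and the collapse of the Boolean sum defining $c\cdot x$, are routine.
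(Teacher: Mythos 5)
Your closing computation is fine, and the first half of your product rule (at most one summand of $cd$ survives in $\mathcal{J}$, and it lies in $\mathcal{R}(c)\cap\mathcal{L}(d)$, which is a singleton by strong regularity) is exactly the observation the paper also uses. But there is a genuine gap at the point you yourself flag: the assertion that ``because $\mathcal{J}$ is idempotent its principal factor is completely $0$-simple, hence regular, so every right cell of $\mathcal{J}$ contains an idempotent,'' together with the sandwich-matrix criterion ($c\cdot d_{\mathcal{R}'}=d_{\mathcal{R}''}$ precisely when $\mathcal{R}'\cap\mathcal{L}''$ is idempotent). Neither statement can be cited from \cite{KM} or \cite{KuM} for this setting, and neither follows from idempotency of $\mathcal{J}$ alone: idempotency supplies only a single triple $x,y,z\in\mathcal{J}$ with $z$ appearing in $xy$, which is far from $0$-simplicity of $\mathcal{J}\cup\{0\}$. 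Moreover, before one can even speak of a principal factor one must pass to the quotient of $R$ in which $\mathcal{J}$ is the maximum two-sided cell (so that, by strong regularity and \cite[Corollary~19]{KM}, the product of two elements of $\mathcal{J}$ is $0$ or a single basis element and this partial operation is associative), and one must check that Green's relations of the resulting finite semigroup agree with the cell structure of $R$, which is defined via multiplication by arbitrary elements of $R$, not just by elements of $\mathcal{J}$. The substantive input hiding behind ``completely $0$-simple'' is the statement that no element of $\mathcal{J}$ annihilates any cell semimodule $C_{\mathcal{L}'}$ --- equivalently, that from any element of $\mathcal{J}$ one can reach, by right multiplication inside $\mathcal{J}$, the element of its right cell in any prescribed left cell. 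That is precisely the hard part of the paper's proof: starting from the one triple $x,y,z$, Lemma~\ref{lem8-21} spreads non-annihilation of $C_{\mathcal{L}_y}$ to all of $\mathcal{J}$, and a strong-regularity argument (for $x'\in\mathcal{R}(y)\cap\mathcal{L}$, the identity $x'y'=z'$ forces $z'=y$, whence $xx'y'=z$ and $xx'\neq 0$) transfers it to the given cell $\mathcal{L}$. Your proposal consumes this statement without proving it, so the unproved structural input is essentially equivalent in difficulty to the theorem itself.

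The gap is fillable, and once filled your route is a legitimate and in fact slightly slicker alternative. With non-annihilation in hand, $\mathcal{J}\cup\{0\}$ becomes a finite $0$-simple semigroup, hence completely $0$-simple; strong regularity makes its $\mathcal{H}$-classes singletons, and Rees's theorem then delivers both of your inputs (an idempotent in every right cell, and your product rule). At that point your single multiplication $c\cdot x=d_{\mathcal{R}''}$, with $c$ the unique element of $\mathcal{R}''\cap\mathcal{L}''$ and $\mathcal{L}''$ the column of an idempotent in some $\mathcal{R}_0$ meeting the support of $x$, reaches every basis vector of $C_{\mathcal{L}}$ in one step, using the Boolean collapse of the sum. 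The paper instead avoids the full Rees picture: it first produces, for each coordinate $x_j$ of $\mathcal{L}$, an element of the subsemimodule $N$ with non-zero $j$-th coefficient (using only that $\mathcal{L}$ is a left cell and that coefficients are non-negative, so no cancellation occurs), and then projects with $x_s$ via the identity $x_sx_t=x_s$. So your approach buys a cleaner endgame at the price of heavier structure theory, while the paper extracts only the weaker facts it actually needs; to make your write-up complete you would have to prove, not cite, the completely-$0$-simple structure of the cell.
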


\begin{proof}
By passing, if necessary, to a suitable quotient of $R$,
without loss of generality, we may assume that the two-sided cell $\mathcal{J}$
of $R$ containing $\mathcal{L}$ is the maximum element with respect to the
two-sided order. As mentioned above, no two left (resp. right) cells of 
$\mathcal{J}$ are comparable with respect to the left (resp. right) order.

Let $x,y,z$ be three elements in $\mathcal{J}$ such that 
$xy=z$ in $\tilde{R}$ which exist as $\mathcal{J}$ is strongly regular and idempotent.
Then $x$ does not annihilate $C_{\mathcal{L}_y}$. Therefore, by Lemma~\ref{lem8-21},
none of the elements in $\mathcal{J}$ annihilates $C_{\mathcal{L}_y}$. In particular,
none of the elements in $\mathcal{L}$ annihilates $C_{\mathcal{L}_y}$. Hence,
for any $x'\in \mathcal{L}$, there are $y',z'\in \mathcal{L}_y$ such that 
$x'y'=z'$ in $\tilde{R}$. 

As $x'$ and $z'$ necessarily lie in the same right cell (as all right cells
inside $\mathcal{J}$
are incomparable with respect to the right order and $\mathcal{J}$ is the maximum
two-sided cell), it follows that $z'=y$ if $x'$ is in the right cell of $y$.
Therefore, in this case we have  $z=xy=xx'y'$, in particular, $xx'\neq 0$.
Consequently, none of the elements in $\mathcal{J}$ annihilates $C_{\mathcal{L}}$. 

Assume that  $\mathcal{L}=\{x_1,x_2,\dots,x_m\}$. Let $N$ be a non-zero subsemimodule
of $C_{\mathcal{L}}$ and 
\begin{displaymath}
0\neq p_1=\sum_{i=1}^ma^{(1)}_ix_i\in N. 
\end{displaymath}
Without loss of generality we may assume that $a^{(1)}_1\neq 0$. As $\mathcal{L}$ is a left cell,
acting on $p_1$ by elements from $R$ we obtain that, for each $x_j$,
our semimodule $N$ contains an element
\begin{displaymath}
0\neq p_j=\sum_{i=1}^ma^{(j)}_ix_i\in N
\end{displaymath}
such that $a^{(j)}_j\neq 0$.

Take now any fixed $x_s$. As it does not annihilate $C_{\mathcal{L}}$, there exists
$x_t$ such that $x_sx_t\neq 0$ implying $x_sx_t=x_s$ by strong regularity.
As $x_sx_{t'}=x_s$ or $0$, for any other $x_{t'}$, we obtain that
$x_s p_t=x_s\in N$. This means that $N$ contains $x_s$. Consequently, 
$N=C_{\mathcal{L}}$, as asserted.
\end{proof}

\subsection{Apex}\label{s8.6}

Let $R$ be a finitely generated $\mathbb{Z}_{\geq 0}$-semiring and
$M$ a proper minimal $R$-semimodule. By Lemma~\ref{lem8-21}, for a
given two-sided cell $\mathcal{J}$, either all or none of the elements
of $\mathcal{J}$ annihilate $M$. Let $\mathcal{J}$ be a maximal,
with respect to the two-sided order, two-sided cell which does
not annihilate $M$. Then the subsemimodule of $M$ generated by $\mathcal{J}M$
is non-zero and hence coincides with $M$ by the minimality of the latter.

If $\mathcal{J}'$ is another maximal,
with respect to the two-sided order, two-sided cell which does
not annihilate $M$, then, similarly, the subsemimodule of $M$ generated by $\mathcal{J}'M$
coincides with $M$. This implies $\mathcal{J}'\mathcal{J}M\neq 0$ and hence
$\mathcal{J}'=\mathcal{J}$ by the maximality. This unique maximal
(with respect to the two-sided order) two-sided cell which does not
annihilate $M$ is called the {\em apex} of $M$, cf. \cite{CM,KM,IRS}.
From the previous argument it follows that the apex is an
idempotent two-sided cell.

\subsection{Proper minimal semimodules from cell semimodules}\label{s8.7}

\begin{theorem}\label{thm8-5}
Let $R$ be a finitely generated $\mathbb{Z}_{\geq 0}$-semiring.
Assume that all two-sided cells in $R$ are strongly regular and idempotent. 
Then every minimal proper $R$-semimodule is a quotient of a cell semimodule.
Conversely, every quotient of a cell semimodule is minimal.
\end{theorem}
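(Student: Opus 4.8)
The plan is to treat the two directions separately: the converse is immediate from earlier results, while the forward direction carries all the work.

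For the converse, the hypothesis guarantees that every left cell $\mathcal{L}$ lies in an idempotent strongly regular two-sided cell, so Theorem~\ref{thm8-4} applies and shows that every cell semimodule $C_{\mathcal{L}}$ is minimal. Since $C_{\mathcal{L}}$ is non-zero, any non-zero quotient of it is again minimal by Proposition~\ref{prop2.2}, which is exactly the assertion.

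For the forward direction, let $M$ be a minimal proper $R$-semimodule. First I would record two structural facts. By Corollary~\ref{corminprosem} every element of $M$ is idempotent, so $M$ is a semilattice; consequently, for $r=\sum_i a_i r_i$ and $m\in M$ the value $r(m)=\sum_{a_i\neq 0} r_i(m)$ depends only on the support of $r$, that is, only on the image of $r$ in $\tilde R$. Hence the $R$-action factors through $R\tto\tilde R$ and $M$ is naturally an $\tilde R$-semimodule; as $C_{\mathcal{L}}$ is likewise defined over $\tilde R$ and pulled back, it suffices to produce a surjection of $\tilde R$-semimodules. Second, by Subsection~\ref{s8.6} the apex $\mathcal{J}$ of $M$ exists, is idempotent, and (being maximal non-annihilating, via Lemma~\ref{lem8-21}) satisfies that no element of $\mathcal{J}$ annihilates $M$, whereas every two-sided cell strictly above $\mathcal{J}$ does. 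I would then fix $x\in\mathcal{J}$ and $m\in M$ with $n:=x(m)\neq 0$ and let $\mathcal{L}$ be the left cell of $x$. The key computation is that for any $r_j\in\mathcal{L}$ and any $r\in\tilde R$, every basis element occurring in $r r_j$ is $\geq_L r_j$, hence lies in a two-sided cell $\geq_J\mathcal{J}$; such an element either lies in $\mathcal{L}$ (when it is $\sim_J\mathcal{J}$, using that no two left cells inside the idempotent cell $\mathcal{J}$ are comparable, see \cite[Corollary~19]{KM}) or lies strictly above $\mathcal{J}$ and therefore annihilates $M$.

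With this dichotomy in hand I would define $\psi\colon C_{\mathcal{L}}\to M$ as the unique semilattice homomorphism with $\psi(r_j)=r_j(m)$ for $r_j\in\mathcal{L}$; this is well defined because $C_{\mathcal{L}}$ is the free semilattice on the set $\{r_j:r_j\in\mathcal{L}\}$, these elements being part of the unique $\mathbf{B}$-basis of $\tilde R$. Using the dichotomy together with idempotency of $M$, both $\psi(r\cdot r_j)=\sum_{r_k\in\mathcal{L}\cap\mathrm{supp}(r r_j)}r_k(m)$ and $r\,\psi(r_j)=(r r_j)(m)$ collapse to the same sum (the contributions from cells above $\mathcal{J}$ vanish), so $\psi$ is a homomorphism of $\tilde R$-semimodules. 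Finally, minimality of $M$ gives $M=\tilde R\,n$, and applying the dichotomy to $x\in\mathcal{L}$ shows that every $r\,n=(r x)(m)$ lies in the submonoid generated by $\{r_j(m):r_j\in\mathcal{L}\}=\mathrm{Im}(\psi)$, so $\psi$ is surjective; Proposition~\ref{prop2.1} then identifies $M$ with a quotient of $C_{\mathcal{L}}$. The main obstacle is precisely the bookkeeping in the key computation: one must verify that the only basis elements contributing to $r r_j$ outside $\mathcal{L}$ are those in cells strictly above the apex, and this is where idempotency of $\mathcal{J}$ (incomparability of its left cells) and maximality of the apex are used, with strong regularity entering only through the invocation of Theorem~\ref{thm8-4} in the converse.
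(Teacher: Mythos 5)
Your proof is correct and follows essentially the same route as the paper: the apex $\mathcal{J}$, a left cell $\mathcal{L}$ therein, the dichotomy for basis elements of $rr_j$ (in $\mathcal{L}$ by incomparability of left cells in an idempotent cell, or strictly above the apex and hence annihilating $M$), the evaluation map $x\mapsto x\cdot m$ inducing a surjection $C_{\mathcal{L}}\tto M$ by minimality, and the converse via Theorem~\ref{thm8-4} and Proposition~\ref{prop2.2}. You merely make explicit two points the paper leaves implicit --- that idempotency of $M$ lets the action factor through $\tilde{R}$, and that $C_{\mathcal{L}}$ is the free monoid semilattice on $\mathcal{L}$, which legitimizes the induced homomorphism --- and your observation that strong regularity is used only in the converse is consistent with the paper's later Theorem~\ref{thm8-91}.
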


\begin{proof}
Let $M$ be a  minimal proper $R$-semimodule and $\mathcal{J}$ be its apex.
Let $\mathcal{L}$ be a left cell in $\mathcal{J}$ and $a\in\mathcal{L}$.
Then, for any $r\in R$ and $m\in M$, we either have 
\begin{displaymath}
ra\cdot m=\sum_{b\in\mathcal{L}}c_b b\cdot m,
\end{displaymath}
where all $c_b\in\mathbb{Z}_{\geq 0}$, or $ra\cdot m=0$. This is due to the combination
of the facts that $\mathcal{J}$ is the apex of $M$ and  that
$\mathcal{L}$ is not left comparable to any other left cell in $\mathcal{J}$. 

Now, take $m\in M$ such that $a\cdot m\neq 0$. Then the
map $R\ni x\mapsto x\cdot m$ is a homomorphism of $R$-semimodules which,
using the facts that $M$ consist of idempotents (see Corollary~\ref{corminprosem}),
induces a non-zero homomorphism from $C_{\mathcal{L}}$ to $M$.
By the minimality of $M$, the latter homomorphism is surjective. The 
first claim follows. The second claim follows from Theorem~\ref{thm8-4} 
and Proposition~\ref{prop2.2}.
\end{proof}

\begin{corollary}\label{cor8-51}
Let $R$ be a finitely generated $\mathbb{Z}_{\geq 0}$-semiring
and $\mathcal{J}$ a strongly regular and idempotent two-sided cell in $R$. 
Let $\mathcal{L}$ and $\mathcal{L}'$ be two left cells  in $\mathcal{J}$.
Then $C_{\mathcal{L}}\cong C_{\mathcal{L}'}$.
\end{corollary}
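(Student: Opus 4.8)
The plan is to exhibit, for a suitable element $b\in\mathcal{L}'$, an explicit homomorphism $\phi_b\colon C_{\mathcal{L}}\to C_{\mathcal{L}'}$ given by right multiplication by $b$, and symmetrically a homomorphism $\psi_{b'}\colon C_{\mathcal{L}'}\to C_{\mathcal{L}}$, and then to conclude that $\phi_b$ is an isomorphism by a finiteness argument rather than by computing an inverse by hand. Throughout I work in the $\mathbf{B}$-semiring $\tilde{R}$ (the $R$-action on every cell semimodule factors through $R\tto\tilde{R}$, so an $\tilde{R}$-linear map is automatically $R$-linear). Recall from Theorem~\ref{thm8-4} that both $C_{\mathcal{L}}$ and $C_{\mathcal{L}'}$ are minimal, and that they are finite, being free semilattices on the finite sets $\mathcal{L}$ and $\mathcal{L}'$.

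For $b\in\mathcal{L}'$ set $\phi_b(x):=\pi_{\mathcal{L}'}(xb)$ for $x\in C_{\mathcal{L}}$. Additivity and $\phi_b(0)=0$ are immediate from additivity of multiplication and of $\pi_{\mathcal{L}'}$. The crux is $\tilde{R}$-equivariance, i.e. that $\pi_{\mathcal{L}'}\big(\pi_{\mathcal{L}}(rx)\,b\big)=\pi_{\mathcal{L}'}\big(r\,\pi_{\mathcal{L}'}(xb)\big)$ for all $r\in\tilde{R}$. I would prove that both sides equal $\pi_{\mathcal{L}'}(rxb)$ using two order-theoretic facts about the idempotent two-sided cell $\mathcal{J}$. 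First, if $x$ is supported on $\mathcal{L}$ then the part of $rx$ lying in $\mathcal{J}$ is again supported on $\mathcal{L}$: any basis element $c$ occurring in $p\,h$ with $h\in\mathcal{L}$ satisfies $h\leq_L c$, and two elements of the same idempotent two-sided cell that are left-comparable lie in the same left cell by \cite[Corollary~19]{KM}. Hence $rx-\pi_{\mathcal{L}}(rx)$ is supported on two-sided cells strictly below $\mathcal{J}$; multiplying such a term by $b\in\mathcal{J}$ keeps it strictly below $\mathcal{J}$ (as $uv\leq_J u$) and $\pi_{\mathcal{L}'}$ annihilates it, giving the left equality. Symmetrically, since $b\in\mathcal{L}'$, the $\mathcal{J}$-part of $xb$ is supported on $\mathcal{L}'$, so $xb-\pi_{\mathcal{L}'}(xb)$ is supported strictly below $\mathcal{J}$ and is killed after left multiplication by $r$ and projection, giving the right equality. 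This bookkeeping with cells is the main obstacle; everything else is formal.

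It remains to ensure $\phi_b$ and $\psi_{b'}$ can be chosen nonzero. Each $C_{\mathcal{L}'}$ is a proper semimodule in which every element is idempotent, so Lemma~\ref{lem8-21} applies, and the argument in the proof of Theorem~\ref{thm8-4} shows that no element of $\mathcal{J}$ annihilates it. In particular, picking any $c\in\mathcal{L}\subseteq\mathcal{J}$, there is $b\in\mathcal{L}'$ with $cb$ having nonzero $\mathcal{J}$-part, and by the previous paragraph this part lies in $\mathcal{L}'$, so $\phi_b(c)=\pi_{\mathcal{L}'}(cb)\neq 0$ and $\phi_b\neq 0$. The same argument with the roles of $\mathcal{L}$ and $\mathcal{L}'$ reversed produces $b'\in\mathcal{L}$ with $\psi_{b'}\neq 0$.

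Finally I assemble these. Since $C_{\mathcal{L}'}$ and $C_{\mathcal{L}}$ are minimal, Lemma~\ref{lem4.1}(b) makes the nonzero maps $\phi_b$ and $\psi_{b'}$ surjective. Their composite $\psi_{b'}\circ\phi_b$ is then a surjective, hence nonzero, endomorphism of the finite minimal semimodule $C_{\mathcal{L}}$, so it is an isomorphism by Corollary~\ref{cor4.3}; in particular $\phi_b$ is injective. Being a bijective homomorphism of semimodules (its set-theoretic inverse is again additive and $\tilde{R}$-equivariant), $\phi_b$ is an isomorphism $C_{\mathcal{L}}\cong C_{\mathcal{L}'}$, as required.
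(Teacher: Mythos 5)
Your proof is correct, but it reaches the conclusion by a genuinely different route than the paper. The paper's own proof is very short: both $C_{\mathcal{L}}$ and $C_{\mathcal{L}'}$ are minimal, proper and have apex $\mathcal{J}$, so the proof of Theorem~\ref{thm8-5} supplies a surjection $\varphi\colon C_{\mathcal{L}}\tto C_{\mathcal{L}'}$, and injectivity is then obtained by a cardinality count: $|C_{\mathcal{L}}|=2^{|\mathcal{L}|}$, $|C_{\mathcal{L}'}|=2^{|\mathcal{L}'|}$, and strong regularity forces $|\mathcal{L}|=|\mathcal{L}'|$, both being the number of right cells in $\mathcal{J}$. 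Unwound, the paper's surjection is essentially your $\phi_b$ (the proof of Theorem~\ref{thm8-5} sends $x$ to $x\cdot m$ for a suitable $m\in C_{\mathcal{L}'}$), so your construction of $\phi_b$, together with the nonvanishing argument via Lemma~\ref{lem8-21} and the non-annihilation statement from the proof of Theorem~\ref{thm8-4}, retraces that part explicitly; your by-hand verification of $\tilde{R}$-equivariance, resting on \cite[Corollary~19]{KM} to keep the $\mathcal{J}$-part of $rx$ inside $\mathcal{L}$ and on the fact that the remaining terms, once outside $\mathcal{J}$, cannot be brought back by further multiplication, is sound --- this is the bookkeeping the paper hides inside Lemma~\ref{lem8-3} and Theorem~\ref{thm8-5}. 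The genuine divergence comes afterwards: instead of counting, you build the second surjection $\psi_{b'}$ (nonzero maps into the minimal targets are surjective by Lemma~\ref{lem4.1}(b)) and conclude via Corollary~\ref{cor4.3} that the composite, a surjective endomorphism of the finite minimal semimodule $C_{\mathcal{L}}$, is bijective, whence $\phi_b$ is injective. What this buys: you never need the fact, asserted without proof in the paper, that all left cells in a strongly regular two-sided cell are equicardinal; the cost is the explicit equivariance check. One cosmetic slip to fix: having correctly written $h\leq_L c$ for $c$ in the support of $ph$ (the paper's convention, in which multiplication moves support \emph{up} the preorders), you later describe the error terms as lying strictly \emph{below} $\mathcal{J}$ and write $uv\leq_J u$, which is the opposite, Kazhdan--Lusztig-style, convention; the argument is unaffected, since under either consistent convention multiplication cannot return those terms to $\mathcal{J}$ and the projections $\pi_{\mathcal{L}}$, $\pi_{\mathcal{L}'}$ annihilate them, but the directions should be made uniform.
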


\begin{proof}
Both $C_{\mathcal{L}}$ and $C_{\mathcal{L}'}$ are minimal
(cf. Theorem~\ref{thm8-4}), proper and
have apex $\mathcal{J}$. From the proof of Theorem~\ref{thm8-5} it follows
that there is a surjective homomorphism  $\varphi:C_{\mathcal{L}}\tto C_{\mathcal{L}'}$.

As $\mathcal{J}$ is strongly regular, all left cells in $\mathcal{J}$
have the same cardinality (the number of right cells in $\mathcal{J}$). 
As both $C_{\mathcal{L}}$ and $C_{\mathcal{L}'}$ are finite of
respective cardinalities $2^{|\mathcal{L}|}$ and 
$2^{|\mathcal{L}'|}$, it follows that $\varphi$ is an isomorphism.
\end{proof}

\subsection{Reduced cell semimodules}\label{s8.8}

From now on, for simplicity, we assume that all two-sided cells of $R$ are idempotent.

Let $\mathcal{L}$ be a left cell of $R$. Let $\mathcal{H}_i$, where $i\in I$, be a complete 
list of non-empty $\mathcal{H}$-cells in $\mathcal{L}$. Consider the boolean $\tilde{C}_{\mathcal{L}}:=2^{I}$
which has the natural structure of a commutative monoid under the boolean addition.

For each $r_j\in\mathbf{r}$ and $i\in I$, we define $r_j\cdot i$ as the set of all elements $s\in I$
for which there exist $x\in \mathcal{H}_i$ and $y\in \mathcal{H}_s$ such that $y$ appears with a 
non-zero coefficient in $r_jx$. 

\begin{proposition}\label{prop8-81}
This defines on  $\tilde{C}_{\mathcal{L}}$ the structure of a minimal $R$-semimodule, moreover,
$\tilde{C}_{\mathcal{L}}$ is a quotient of ${C}_{\mathcal{L}}$.
\end{proposition}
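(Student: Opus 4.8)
The plan is to realise $\tilde{C}_{\mathcal{L}}$ as a quotient of the cell semimodule $C_{\mathcal{L}}$, so that the $R$-semimodule axioms come for free from the quotient construction of Subsection~\ref{s2.5} and only minimality has to be argued separately. Since $\tilde{R}$ is a $\mathbf{B}$-semiring, the underlying monoid of $C_{\mathcal{L}}$ is the boolean $2^{\mathcal{L}}$, an element being a subset $S\subseteq\mathcal{L}$ with union as addition, and the action is $r\cdot S=\pi_{\mathcal{L}}(rS)$ (cf. Lemma~\ref{lem8-3}). Writing $\mathcal{L}=\coprod_{i\in I}\mathcal{H}_i$ as the disjoint union of its $\mathcal{H}$-cells, I would define
\begin{displaymath}
\Phi:C_{\mathcal{L}}\to\tilde{C}_{\mathcal{L}}=2^{I},\qquad \Phi(S)=\{i\in I\,:\,S\cap\mathcal{H}_i\neq\varnothing\},
\end{displaymath}
which is visibly a surjective homomorphism of monoids (take $S=\bigcup_{i\in A}\mathcal{H}_i$ to reach a prescribed $A\subseteq I$). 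Everything then reduces to showing that $\Phi$ intertwines the $R$-actions, where $\tilde{C}_{\mathcal{L}}$ carries the action of the statement: for then $\Phi$ is an $R$-epimorphism and Proposition~\ref{prop2.1} identifies $\tilde{C}_{\mathcal{L}}=\mathrm{Im}(\Phi)$ with the quotient $C_{\mathcal{L}}/_{\sim_{\Phi}}$, giving both the semimodule structure and the ``moreover'' clause.

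By additivity it suffices to check intertwining on a generator $r_j\in\mathbf{r}$ and a single $\mathcal{H}$-cell. Setting $f_j(x):=\{s\in I:\mathcal{H}_s\cap\mathrm{supp}(r_jx)\neq\varnothing\}$ for $x\in\mathcal{L}$, one has $\Phi(r_j\cdot S)=\bigcup_{x\in S}f_j(x)$, so the point is precisely that $f_j(x)$ depends only on the $\mathcal{H}$-cell $\mathcal{H}_i\ni x$; then $f_j(x)$ coincides with $r_j\cdot i$ from the statement and $\Phi(r_j\cdot S)=\bigcup_{i\in\Phi(S)}r_j\cdot i=r_j\cdot\Phi(S)$. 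This well-definedness is the crux. I would prove it thus: fix $x,x'\in\mathcal{H}_i$, so $x\sim_R x'$ and there are $a,a'\in R$ with $x'\in\mathrm{supp}(xa)$ and $x\in\mathrm{supp}(x'a')$. Given $s\in f_j(x)$, choose $y\in\mathcal{H}_s\cap\mathrm{supp}(r_jx)$; multiplying the boolean inequality $x\le x'a'$ on the left by $r_j$ yields $\mathrm{supp}(r_jx)\subseteq\mathrm{supp}((r_jx')a')$, so $y\in\mathrm{supp}(za')$ for some $z\in\mathrm{supp}(r_jx')$. Here $z\geq_L x'$ (left multiplication) while $z\leq_R y$ (right multiplication); as $x',y\in\mathcal{J}$ this squeezes $z$ into $\mathcal{J}$, and the incomparability of cells inside an idempotent two-sided cell (\cite[Corollary~19]{KM}) forces $z\sim_L x'$ and $z\sim_R y$, i.e. $z\in\mathcal{L}\cap\mathcal{R}_s=\mathcal{H}_s$. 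Hence $s\in f_j(x')$, and by symmetry $f_j(x)=f_j(x')$. The same squeezing shows, for $r_j\in\mathcal{J}$, that $\mathrm{supp}(r_jx)\cap\mathcal{L}$ lies in the single $\mathcal{H}$-cell $\mathcal{L}\cap\mathcal{R}(r_j)$, so that $r_j\cdot i$ is empty or the single index $i^\ast$ determined by the right cell of $r_j$.

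For minimality, let $N\neq\{0\}$ be a subsemimodule and fix $0\neq p\in N$. Because $\mathcal{L}$ is a single left cell, for every target $i'\in I$ and every $i\in\Phi(p)$ there is $r\in R$ with $i'\in r\cdot i$ (choose representatives and use $x\sim_L x'$); summing finitely many elements $r\cdot p\in N$ produces the top element $I\in\tilde{C}_{\mathcal{L}}$ inside $N$. To isolate a singleton $\{i'\}$, I apply a basis element $r_j\in\mathcal{R}_{i'}$: by the sharper fact above $r_j\cdot(-)$ lands inside $\{i'\}$, so $r_j\cdot I=\{i'\}$ as soon as $r_j$ does not annihilate $\tilde{C}_{\mathcal{L}}$. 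That $\mathcal{J}$ does not annihilate $\tilde{C}_{\mathcal{L}}$ follows from Lemma~\ref{lem8-21} (applicable since $\tilde{C}_{\mathcal{L}}$ is a proper semimodule all of whose elements are idempotent) once one element of $\mathcal{J}$ acts non-trivially; by the regularity of the idempotent cell $\mathcal{J}$, the left cell $\mathcal{L}$ contains a local idempotent $e$, which fixes its own $\mathcal{H}$-cell index and so acts non-trivially. Hence $\{i'\}\in N$ for all $i'\in I$, and as these generate $\tilde{C}_{\mathcal{L}}$ under addition we conclude $N=\tilde{C}_{\mathcal{L}}$.

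The main obstacle is the well-definedness of the second paragraph: everything rests on the fact that an element squeezed between $\geq_L x'$ and $\leq_R y$ cannot leave an idempotent two-sided cell and thus falls into a prescribed $\mathcal{H}$-cell. This is exactly where the standing hypothesis that all two-sided cells are idempotent enters, and it is what makes the $\mathcal{H}$-cell the correct unit of reduction.
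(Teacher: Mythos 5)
Your overall route is the paper's route: you realise $\tilde{C}_{\mathcal{L}}$ as the quotient of ${C}_{\mathcal{L}}$ by the congruence that only remembers which $\mathcal{H}$-cells meet the support (your $\Phi$ is exactly the projection along the paper's congruence $\tau$), and the crux is the same well-definedness statement. The paper compresses this into the remark that $\tau$-equivalence amounts to equality of the $\leq_R$-ideals generated by $r_ja$ and $r_jb$, invoking \cite[Corollary~19]{KM}; your explicit squeezing argument ($z\geq_L x'$ and $z\leq_R y$ forces $z\in\mathcal{J}$, then incomparability of left and of right cells inside the idempotent cell pins $z$ into $\mathcal{H}_s$) is a correct and more detailed version of the same step, and your sharper observation that for $r_j\in\mathcal{J}$ the set $\mathrm{supp}(r_jx)\cap\mathcal{L}$ lies in the single $\mathcal{H}$-cell $\mathcal{L}\cap\mathcal{R}(r_j)$ is also correct and is what your singleton-isolation step needs.

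There is, however, a genuine gap in the minimality part, precisely at the non-annihilation step: you assert that ``by the regularity of the idempotent cell $\mathcal{J}$, the left cell $\mathcal{L}$ contains a local idempotent $e$'' with $e\in\mathrm{supp}(e^2)$. In this paper ``idempotent'' only means that there exist $x,y,z\in\mathcal{J}$ with $z\in\mathrm{supp}(xy)$; it is not the semigroup-theoretic notion of a regular $\mathcal{J}$-class, and neither the paper nor \cite{KM} establishes that such a cell contains an element $e$ with $e\in\mathrm{supp}(e^2)$, much less one in the prescribed left cell $\mathcal{L}$. The classical finite-semigroup argument (``$J\cap J^2\neq\varnothing$ implies $J$ regular, hence idempotents in every $\mathcal{L}$-class'') relies on stability and Green's lemma, which do not transfer verbatim to the multisemigroup $\tilde{R}$; note also that the $y$ in the idempotency triple may lie in a different left cell $\mathcal{L}_y$, so even an idempotent somewhere in $\mathcal{J}$ would not suffice as stated. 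The repair is the transfer argument from the proof of Theorem~\ref{thm8-4}, which is what the paper means by ``proved similarly'', and it can be run entirely with your own squeezing technique: by idempotency, $z\in\mathrm{supp}(xy)$ with $z\in\mathcal{L}_y$ (squeeze), so $x$ does not annihilate $C_{\mathcal{L}_y}$, and Lemma~\ref{lem8-21} shows that no element of $\mathcal{J}$, in particular no element of $\mathcal{L}$, annihilates $C_{\mathcal{L}_y}$. Hence, for any $x'\in\mathcal{L}$, there are $y',z'\in\mathcal{L}_y$ with $z'\in\mathrm{supp}(x'y')$, and squeezing gives $z'\sim_R x'$; choosing $w$ with $x'\in\mathrm{supp}(z'w)$ and using the boolean inequality $z'\le x'y'$, one finds $u\in\mathrm{supp}(y'w)$ with $x'\in\mathrm{supp}(x'u)$, and a final squeeze (from $y'\leq_R u$ and $u\leq_L x'$) places $u$ in $\mathcal{L}$. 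Thus $\pi_{\mathcal{L}}(x'u)\neq 0$, so by Lemma~\ref{lem8-21} no element of $\mathcal{J}$ annihilates $C_{\mathcal{L}}$, hence none annihilates the quotient $\tilde{C}_{\mathcal{L}}$ either, and the rest of your minimality argument (reaching the top element $I$, then cutting down to singletons with $r_j\in\mathcal{R}_{i'}$) goes through unchanged.
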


\begin{proof}
We define on  ${C}_{\mathcal{L}}$ an equivalence relation $\tau$ as follows: two elements $x$ and $y$ of 
${C}_{\mathcal{L}}$ are $\tau$-equivalent if and only if, for each $i\in I$, some element of
$\mathcal{H}_i$ appears with a non-zero coefficient in $x$ if and only if some
element of $\mathcal{H}_i$  (but not necessarily the same element as for $x$) 
appears with a non-zero coefficient in $y$.

The underlying monoid of ${C}_{\mathcal{L}}$ is isomorphic to the boolean of $\mathcal{L}$ with 
respect to the operation of boolean addition. The equivalence relation $\tau$ on ${C}_{\mathcal{L}}$
is generated by the  equivalence relation on $\mathcal{L}$ with equivalence classes $\mathcal{H}_i$, 
where $i\in I$. Therefore $\tau$ is a congruence on the underlying monoid of ${C}_{\mathcal{L}}$
and the quotient ${C}_{\mathcal{L}}/\tau$ is canonically isomorphic to $\tilde{C}_{\mathcal{L}}$. 
We claim that $\tau$ is even an $R$-congruence. To prove this, let $a$ and $b$ be two elements
in some $\mathcal{H}_i$. We need to show that $r_j a$ and $r_j b$ are $\tau$-equivalent.

Let $\mathcal{J}$ be the two-sided cell containing $\mathcal{L}$. By our assumptions,
$\mathcal{J}$ is idempotent, in particular, by \cite[Corollary~19]{KM}, any two different
right cells in $\mathcal{J}$ are incomparable with respect to the right order.
Therefore the fact that $r_j a$ and $r_j b$ are $\tau$-equivalent is equivalent to the
fact that the $\leq_R$-ideals generated by $r_j a$ and $r_j b$ coincide. As $a$ and $b$
are in the same $\mathcal{H}$-cell, the  $\leq_R$-ideals generated by $a$ and $b$ coincide.
Hence the $\leq_R$-ideals generated by $r_j a$ and $r_j b$ coincide as well. 

The above implies that $\tau$ is an $R$-congruence on  ${C}_{\mathcal{L}}$. 
From the definition of $\tau$ it follows directly that the quotient 
${C}_{\mathcal{L}}/\tau$ is canonically isomorphic to 
$\tilde{C}_{\mathcal{L}}$ also as an $R$-semimodule. 

That $\tilde{C}_{\mathcal{L}}$ is minimal is proved similarly to the proof of Theorem~\ref{thm8-4}.
\end{proof}

The semimodule $\tilde{C}_{\mathcal{L}}$ will be called the {\em reduced cell semimodule}
corresponding to  $\mathcal{L}$. From the proof of Proposition~\ref{prop8-81} it follows that, in case
$\mathcal{L}$ belongs to a strongly regular two-sided cell, we have $\tilde{C}_{\mathcal{L}}\cong
{C}_{\mathcal{L}}$.

\subsection{Proper minimal semimodules from reduced cell semimodules}\label{s8.9}

\begin{theorem}\label{thm8-91}
Let $R$ be a finitely generated $\mathbb{Z}_{\geq 0}$-semiring.
Assume that all two-sided cells in $R$ are idempotent. 
Then every minimal proper $R$-semimodule is a quotient of a reduced cell semimodule.
Conversely, every quotient of a reduced cell semimodule is minimal.
\end{theorem}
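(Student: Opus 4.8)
The plan is to prove the two implications separately, reusing the machinery developed for Theorem~\ref{thm8-5} and Proposition~\ref{prop8-81}. The converse is immediate: by Proposition~\ref{prop8-81} the reduced cell semimodule $\tilde{C}_{\mathcal{L}}$ is minimal, and by Proposition~\ref{prop2.2} every non-zero quotient of a minimal semimodule is again minimal; hence every non-zero quotient of $\tilde{C}_{\mathcal{L}}$ is minimal. All the work is in the forward direction.

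For the forward direction, let $M$ be a minimal proper $R$-semimodule. First I would invoke Corollary~\ref{corminprosem} to conclude that every element of $M$ is idempotent, so that the $R$-action factors through the $\mathbf{B}$-semiring $\tilde{R}$. Let $\mathcal{J}$ be the apex of $M$ (Subsection~\ref{s8.6}); it is idempotent, so by \cite[Corollary~19]{KM} no two left cells inside $\mathcal{J}$ are comparable. Fix a left cell $\mathcal{L}\subseteq\mathcal{J}$ and, using that the apex does not annihilate $M$ (Lemma~\ref{lem8-21}), an element $m\in M$ with $a\cdot m\neq 0$ for some $a\in\mathcal{L}$. Exactly as in the proof of Theorem~\ref{thm8-5}, the assignment $\sum_{r_i\in\mathcal{L}}c_i r_i\mapsto\sum_{r_i\in\mathcal{L}}c_i\,(r_i\cdot m)$ defines a non-zero $R$-homomorphism $\phi\colon C_{\mathcal{L}}\to M$: for any $r$ and any $r_i\in\mathcal{L}$, each basis element occurring in $r r_i$ is $\geq_L r_i$, hence either lies in $\mathcal{L}$ (by incomparability of left cells in $\mathcal{J}$) or lies in a two-sided cell strictly above $\mathcal{J}$ and so annihilates $M$ by maximality of the apex; this makes $\phi$ well defined and $R$-equivariant. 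Crucially, this step uses only idempotency of $\mathcal{J}$ and not strong regularity, so it remains available in the present setting. By minimality of $M$, the homomorphism $\phi$ is surjective, and $M$ is already exhibited as a quotient of $C_{\mathcal{L}}$.

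It remains to promote this to a quotient of $\tilde{C}_{\mathcal{L}}$, that is, to show that $\phi$ factors through the canonical projection $C_{\mathcal{L}}\tto\tilde{C}_{\mathcal{L}}$ of Proposition~\ref{prop8-81}. Since the defining congruence $\tau$ of $\tilde{C}_{\mathcal{L}}$ is generated by identifying basis elements lying in a common $\mathcal{H}$-cell, and since $\phi$ is an additive map into a semilattice, this factorisation is equivalent to the single assertion that $a\cdot m=b\cdot m$ whenever $a$ and $b$ lie in the same $\mathcal{H}$-cell of $\mathcal{L}$. This is the main obstacle, and the place where the reduced construction is genuinely needed. To establish it I would argue as in the proof of Proposition~\ref{prop8-81}: because $a$ and $b$ lie in the same $\mathcal{H}$-cell they generate the same $\leq_R$-ideal, and idempotency of $\mathcal{J}$ (again via \cite[Corollary~19]{KM}, now giving incomparability of right cells) forces $a$ and $b$ to differ only by the group $\mathcal{H}$-cell structure of $\mathcal{J}$; the plan is then to use minimality of $M$ together with the fact that $M$ is a band to show that this group acts trivially on the image of $\mathcal{J}$ in $M$, so that $a$ and $b$ act identically on $m$. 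Once this equality is secured, $\phi$ descends to a surjection $\tilde{C}_{\mathcal{L}}\tto M$, exhibiting $M$ as a quotient of a reduced cell semimodule. I expect this collapsing of the $\mathcal{H}$-cell group action to be the delicate point: in the strongly regular case of Theorem~\ref{thm8-5} the $\mathcal{H}$-cells are singletons and the assertion is vacuous, which is precisely why the unreduced cell semimodule sufficed there, whereas here one must show that the finer information carried by $C_{\mathcal{L}}$ is invisible to any minimal proper semimodule.
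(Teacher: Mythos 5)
Your converse direction and the first half of your forward direction agree with the paper: minimality of $\tilde{C}_{\mathcal{L}}$ (Proposition~\ref{prop8-81}) together with Proposition~\ref{prop2.2} handles the converse, and the evaluation map at a suitable $m$, made well defined by Corollary~\ref{corminprosem}, the apex property and the incomparability of left cells in an idempotent two-sided cell, yields a surjection $C_{\mathcal{L}}\tto M$ exactly as in Theorem~\ref{thm8-5}. The gap is in your final step. You reduce everything to the pointwise identity $a\cdot m=b\cdot m$ for $a,b$ in a common $\mathcal{H}$-cell, and for this you offer only a plan resting on ``the group $\mathcal{H}$-cell structure of $\mathcal{J}$'' acting trivially. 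In this generality there is no such group: $\mathcal{H}$-cells of the multisemigroup $\tilde{R}$ attached to a finitely generated $\mathbb{Z}_{\geq 0}$-semiring carry no group structure, and nothing in Section~\ref{s8} supplies one; the analogy with Green's theory (an $\mathcal{H}$-class containing an idempotent is a group) concerns semigroups, not the basis $\mathbf{r}$. Worse, it is not clear that your pointwise identity holds at all: Lemma~\ref{lem8-21} gives an all-or-nothing annihilation statement only at the level of a whole two-sided cell acting on all of $M$, and nothing forces two elements of the same $\mathcal{H}$-cell to have the same annihilation pattern on a fixed element $m$. If they do not, your $\phi$ simply does not factor through the congruence $\tau$, so your route can fail on the nose, not merely remain unfinished.

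The paper's proof is designed precisely to avoid this issue: it never factors $\phi$, but builds a new epimorphism. For each $\mathcal{H}$-cell $\mathcal{H}_i$ one sets $h_i:=\sum_{x\in\mathcal{H}_i}x$ and considers the submonoid $N'$ of $M$ generated by the elements $h_i\cdot m$, $i\in I$. The combinatorial heart is the claim that if $r_jh_i$ has a non-zero coefficient at one element of some $\mathcal{H}_s$, then it has non-zero coefficients at \emph{all} elements of $\mathcal{H}_s$; this is proved by examining the right ideal $r_jh_iR$ and invoking \cite[Corollary~19]{KM}. Since every element of $M$ is idempotent, multiplicities are invisible, so $r_j\cdot(h_i\cdot m)$ is again a sum of elements $h_s\cdot m$; hence $RN'=N'$, minimality forces $M=N'$, and $i\mapsto h_i\cdot m$ extends to an epimorphism $\tilde{C}_{\mathcal{L}}\tto M$, which is all the theorem asserts (note this map need not be compatible with your $\phi$). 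To repair your proposal, replace the single elements $a\in\mathcal{L}$ by the sums $h_i$ and prove the all-or-nothing support claim above, rather than the trivial-group-action claim.
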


\begin{proof}
We outline an argument which is similar to the proof of Theorem~\ref{thm8-5}.
Let $M$ be a minimal proper $R$-semimodule, 
$\mathcal{J}$ the apex of $M$ and $\mathcal{L}$ a left cell in $\mathcal{J}$.
Note that each element in $M$ is idempotent by Corollary~\ref{corminprosem}.
Let $m\in M$ be a non-zero element which is not annihilated by some element in 
$\mathcal{L}$. The map $x\mapsto x\cdot m$, from $R$ to $M$, is a homomorphism of
$R$-semimodules which induces a non-zero homomorphism from $C_{\mathcal{L}}$ to
$M$. By the minimality of $M$, we obtain that $M$ is a quotient of $C_{\mathcal{L}}$.

We claim that the quotient map factors through $\tilde{C}_{\mathcal{L}}$. For each
$\mathcal{H}_i$, let $h_i\in R$ denote the sum of all elements in $\mathcal{H}_i$.
Consider the submonoid  $N'$ of $M$ generated by $h_i\cdot m$, $i\in I$,
and the subsemimodule $N=RN'$. Clearly, $N'\subset N$. 

We claim that $N=N'$. For this, we need to show that each $r_j\cdot(h_i\cdot m)$ equals
the sum of all $h_s\cdot m$, for which $r_jh_i$ contains an element in $\mathcal{H}_s$
with a non-zero coefficient. Let $s$ be such that $r_jh_i$ contains an element in 
$\mathcal{H}_s$ with a non-zero coefficient. We need to show that $r_jh_i$ has a
non-zero coefficient at each element from $\mathcal{H}_s$. Consider the right ideal
$r_jh_iR$. By assumptions, $r_jh_iR$ contains an element with a non-zero coefficient 
at some element in $\mathcal{H}_s$. Since $r_jh_iR$ is a right ideal, every element
in $\mathcal{H}_s$ has to appear with a non-zero coefficient in some element of 
$r_jh_iR$. At the same time, by \cite[Corollary~19]{KM}, the facts that
$r_p\geq_{\mathcal{R}}\mathcal{H}_i$ and $r_jr_p$ has a
non-zero coefficient at each element from $\mathcal{H}_s$ imply $r_p\in \mathcal{H}_i$.
Therefore $r_jh_i$ must contain all elements in $\mathcal{H}_s$ with non-zero coefficient.
Consequently, $N=N'$.

By the minimality of $M$, we have $M=N$. Therefore, mapping $i$ to $h_i\cdot m$, for $i\in I$,
extends to an epimorphism from $\tilde{C}_{\mathcal{L}}$ to $N$. The first part of 
the claim follows. The second part is proved similarly to Theorem~\ref{thm8-5}.
\end{proof}

\section{The Kazhdan-Lusztig semiring of a dihedral group}\label{s9}

\subsection{Dihedral groups, their group algebras and the corresponding Kazh\-dan-Lusztig semirings}\label{s9.1}

In this section, we fix $n\geq 3$. Let $D_{2\cdot n}$ denote the {\em dihedral group} of
symmetries of a regular $n$-gon in a plane. The group $D_{2\cdot n}$ is a Coxeter group
with presentation
\begin{displaymath}
D_{2\cdot n}=\langle s,t\,:\, s^2=t^2=(st)^n=(ts)^n=e\rangle.
\end{displaymath}
The group $D_{2\cdot n}$ has $2n$ elements
\begin{displaymath}
D_{2\cdot n}=\{e,s,t,st,ts,sts,tst,\dots,w_0:=\underbrace{stst\dots}_{n\text{ factors}}=
\underbrace{tsts\dots}_{n\text{ factors}}\}.
\end{displaymath}
The {\em Bruhat order} on $D_{2\cdot n}$, denoted $\preceq$, has the following Hasse diagram
\begin{displaymath}
\xymatrix{ 
&w_0\ar@{-}[dl]\ar@{-}[dr]&\\
\dots\ar@{-}[drr]\ar@{-}[d]&\dots&\dots\ar@{-}[dll]\ar@{-}[d]\\
st\ar@{-}[drr]\ar@{-}[d]&&ts\ar@{-}[dll]\ar@{-}[d]\\
s\ar@{-}[dr]&&t\ar@{-}[dl]\\
&e&
}
\end{displaymath}

Consider the integral group ring $\mathbb{Z}[D_{2\cdot n}]$ with the standard basis consisting of
group elements. For $w\in D_{2\cdot n}$, define the corresponding {\em Kazhdan-Lusztig basis} element
$\underline{w}$ as follows:
\begin{displaymath}
\underline{w}:=\sum_{x\preceq w}x.
\end{displaymath}
Then $\{\underline{w}:w\in D_{2\cdot n}\}$ is another basis of $\mathbb{Z}[D_{2\cdot n}]$, called the
{\em Kazhdan-Lusztig basis}.
In this section, our main object of study is the $\mathbb{Z}_{\geq 0}$-subsemiring $R$
of $\mathbb{Z}[D_{2\cdot n}]$
generated by the Kazhdan-Lusztig basis elements. The latter elements form the unique 
$\mathbb{Z}_{\geq 0}$-basis in the $\mathbb{Z}_{\geq 0}$-semiring $R$. 

The semiring $R$ has three two-sided cells: 
\begin{itemize}
\item the cell $\{\underline{e}\}$ which is, at the same time, a left and a right cell;
\item the cell $\{\underline{w_0}\}$ which is, at the same time, a left and a right cell;
\item the cell, which we denote by $\mathcal{J}$, that consists of all remaining elements.
\end{itemize}
All these two-sided cells are idempotent. The cell $\mathcal{J}$ consists of two left cells:
\begin{itemize}
\item the left cell $\mathcal{L}_{s}$ containing $\underline{s}$, it consists of all $\underline{w}$
such that $w\neq w_0$ and the unique reduced expression of $w$ has $s$ as the rightmost letter;
\item the left cell $\mathcal{L}_{t}$ containing $\underline{t}$, it consists of all $\underline{w}$
such that $w\neq w_0$ and the unique reduced expression of $w$ has $t$ as the rightmost letter.
\end{itemize}
The cell $\mathcal{J}$ consists of two right cells:
\begin{itemize}
\item the right cell $\mathcal{R}_{s}$ containing $\underline{s}$, it consists of all $\underline{w}$
such that $w\neq w_0$ and the unique reduced expression of $w$ has $s$ as the leftmost letter;
\item the right cell $\mathcal{R}_{t}$ containing $\underline{t}$, it consists of all $\underline{w}$
such that $w\neq w_0$ and the unique reduced expression of $w$ has $t$ as the leftmost letter.
\end{itemize}
Consequently, $\mathcal{L}_{s}$ consists of two $\mathcal{H}$-cells: 
$\mathcal{L}_{s}\cap \mathcal{R}_{s}$ and $\mathcal{L}_{s}\cap \mathcal{R}_{t}$,
and similarly for all other left and right cells in $\mathcal{J}$.

Recall the following formulae:
\begin{equation}\label{eqnkls}
\underline{s}\cdot \underline{w}=
\begin{cases}
\underline{sw}, & w=e,t;\\
2\underline{w}, & \text{$w$ has a reduced expression of the form $s\dots$};\\
\underline{sw}+\underline{tw}, & \text{else};
\end{cases}
\end{equation}
and
\begin{equation}\label{eqnklt}
\underline{t}\cdot \underline{w}=
\begin{cases}
\underline{tw}, & w=e,s;\\
2\underline{w}, & \text{$w$ has a reduced expression of the form $t\dots$};\\
\underline{sw}+\underline{tw}, & \text{else}.
\end{cases}
\end{equation}

If $n=3$, then our semiring $R$ coincides with the semiring $R$ considered in Subsection~\ref{s7.1}.

For more details on Kazhdan-Lusztig combinatorics of dihedral groups, we refer the reader to \cite{El}.

\subsection{Classification of minimal proper $R$-semimodules}\label{s9.2}

Consider the $R$-se\-mi\-mo\-du\-le $\tilde{C}_{\mathcal{L}_{s}}$. We have $\tilde{C}_{\mathcal{L}_{s}}:=2^{\{x,y\}}$,
where $x$ corresponds to $\mathcal{L}_{s}\cap \mathcal{R}_{s}$ and $y$
corresponds to $\mathcal{L}_{s}\cap \mathcal{R}_{t}$. The action  of $R$ on $x$ and $y$ is given,
for $w\in D_{2\cdot n}$, by
\begin{displaymath}
\underline{w}\cdot x=
\begin{cases}
0, & w=w_0,\\ 
y, & w\in \mathcal{R}_{t},\\
x, & \text{else};
\end{cases}
\qquad
\underline{w}\cdot y=
\begin{cases}
0, & w=w_0,\\ 
x, & w\in \mathcal{R}_{s},\\
y, & \text{else}.
\end{cases}
\end{displaymath}

\begin{lemma}\label{lem9-01}
The $R$-semimodule  $\tilde{C}_{\mathcal{L}_{s}}$ has exactly three non-trivial quotients, namely,
\begin{itemize}
\item the $3$-element quotient $N_1$ in which $x+y$ is identified with $x$;
\item the $3$-element quotient $N_2$ in which $x+y$ is identified with $y$;
\item the $2$-element quotient $N_3$ in which $x+y$ is identified with both $x$ and $y$.
\end{itemize}
\end{lemma}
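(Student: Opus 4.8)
The plan is to classify all $R$-congruences on the four-element monoid underlying $\tilde{C}_{\mathcal{L}_{s}}$, whose elements are $0$, $x$, $y$ and $x+y$, and then read off the quotients. The first step I would take is to note that the action simplifies drastically: by the displayed formulas for $\underline{w}\cdot x$ and $\underline{w}\cdot y$, the operator $\underline{w}$ depends only on the class of $w$ in the partition $D_{2\cdot n}=\{e\}\sqcup\mathcal{R}_{s}\sqcup\mathcal{R}_{t}\sqcup\{w_0\}$. Concretely, $\underline{e}$ acts as the identity, $\underline{w_0}$ as the zero map, every $\underline{w}$ with $w\in\mathcal{R}_{s}$ acts by the map $\sigma$ sending each of $x,y,x+y$ to $x$, and every $\underline{w}$ with $w\in\mathcal{R}_{t}$ acts by the map $\tau$ sending each of $x,y,x+y$ to $y$ (using additivity to evaluate on $x+y$). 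Since the identity and the zero map impose no condition, an $R$-congruence is precisely a monoid congruence on $\{0,x,y,x+y\}$ compatible with $\sigma$ and $\tau$; this turns the whole problem into a finite check.

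Next I would show that $0$ is a singleton class in every proper congruence $\sim$. Indeed, if $0\sim x$, then applying $\tau$ gives $0=\tau(0)\sim\tau(x)=y$, and additivity then yields $x+y\sim 0+0=0$, so $\sim$ is the full relation. The cases $0\sim y$ and $0\sim(x+y)$ reduce to this one by applying $\sigma$, since $\sigma(y)=\sigma(x+y)=x$. Hence any congruence other than the full relation identifies elements only within $\{x,y,x+y\}$, and it remains to run through the five partitions of this three-element set (leaving $0$ alone), discarding the equality relation.

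The step I expect to require the most care—and really the only subtle point—is the would-be identification $x\sim y$: adding $x$ to both sides forces $x=x+x\sim y+x=x+y$, so this relation is \emph{not} a congruence on its own but generates the total collapse of $\{x,y,x+y\}$. By contrast, I would verify directly that identifying $x$ with $x+y$ (fixing $y$) is compatible with $+$, $\sigma$ and $\tau$, yielding $N_1$; symmetrically, identifying $y$ with $x+y$ yields $N_2$; and collapsing all of $\{x,y,x+y\}$ to one class yields $N_3$. Since these three, together with the equality and full relations, exhaust the congruences fixing $0$ as a singleton, and every congruence identifying $0$ with another element is the full relation, the proper non-trivial quotients are exactly $N_1$, $N_2$ and $N_3$, as claimed.
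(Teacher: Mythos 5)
Your proof is correct and takes essentially the same route as the paper: the paper depicts $\tilde{C}_{\mathcal{L}_{s}}$ via the left diagram in \eqref{eqnewfor9} (where, exactly as in your reduction, all of $\mathcal{R}_s$ acts by a single map $\sigma$ and all of $\mathcal{R}_t$ by a single map $\tau$) and defers the finite congruence check to the analogous computation of Subsection~\ref{s7.5}. You merely carry out that finite check explicitly --- first pinning $\{0\}$ as a singleton class in any proper congruence, then running through the five partitions of $\{x,y,x+y\}$, correctly noting that $x\sim y$ alone fails and generates the collapse yielding $N_3$ --- so your write-up supplies in full the verification the paper leaves to the reader.
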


\begin{proof}
The $R$-semimodule  $\tilde{C}_{\mathcal{L}_{s}}$ can be depicted using the left picture in \eqref{eqnewfor9},
where $x$ corresponds to $(1,0)$ and $y$ to $(0,1)$, the action of elements in $\mathcal{R}_{s}$
is given by the dashed arrows and the action of elements in $\mathcal{R}_{t}$
is given by the dotted arrows. The claim of the lemma is then checked similarly to Subsection~\ref{s7.5}.
\end{proof}

As an immediate corollary from Theorem~\ref{thm8-91} and Lemma~\ref{lem9-01}, we have
the following claim (cf. Theorem~\ref{thm7.2}).

\begin{corollary}\label{cor9-02}
The $R$-semimodules ${C}_{\{\underline{e}\}}$, ${C}_{\{\underline{w_0}\}}$,
$\tilde{C}_{\mathcal{L}_{s}}$, $N_1$, $N_2$ and $N_3$ are the only minimal proper $R$-semimodules. 
\end{corollary}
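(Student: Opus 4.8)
The plan is to invoke the general machinery of Section~\ref{s8} together with the explicit cell structure of the dihedral Kazhdan-Lusztig semiring laid out at the start of Section~\ref{s9}. The key input is that all two-sided cells of $R$ are idempotent, which was recorded when the three two-sided cells $\{\underline{e}\}$, $\{\underline{w_0}\}$ and $\mathcal{J}$ were introduced. This places us squarely in the hypotheses of Theorem~\ref{thm8-91}, which asserts that every minimal proper $R$-semimodule is a quotient of a reduced cell semimodule, and conversely that every quotient of a reduced cell semimodule is minimal. Thus the classification reduces to enumerating the left cells of $R$ and, for each, computing the quotients of the associated reduced cell semimodule $\tilde{C}_{\mathcal{L}}$.

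First I would enumerate the left cells. From Subsection~\ref{s9.1}, the left cells of $R$ are exactly $\{\underline{e}\}$, $\{\underline{w_0}\}$, $\mathcal{L}_s$ and $\mathcal{L}_t$. The corresponding reduced cell semimodules are then the objects to analyse. For the singleton cells $\{\underline{e}\}$ and $\{\underline{w_0}\}$, each $\mathcal{H}$-cell is a single element, so $\tilde{C}_{\{\underline{e}\}} = C_{\{\underline{e}\}}$ and $\tilde{C}_{\{\underline{w_0}\}} = C_{\{\underline{w_0}\}}$ are two-element semimodules, hence simple, and their only non-trivial quotient is themselves. For $\mathcal{L}_s$, the reduced cell semimodule $\tilde{C}_{\mathcal{L}_s}$ is the four-element semimodule analysed in Lemma~\ref{lem9-01}, whose only non-trivial quotients are $N_1$, $N_2$ and $N_3$. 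By the symmetry $s \leftrightarrow t$ of the dihedral presentation, $\tilde{C}_{\mathcal{L}_t}$ is isomorphic to $\tilde{C}_{\mathcal{L}_s}$ (this also follows from Corollary~\ref{cor8-51} applied to the strongly regular case, or more directly since $\mathcal{L}_s$ and $\mathcal{L}_t$ lie in the same idempotent two-sided cell $\mathcal{J}$), so it contributes no new semimodules.

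Assembling these, the complete list of minimal proper $R$-semimodules obtained as quotients of reduced cell semimodules is: $C_{\{\underline{e}\}}$, $C_{\{\underline{w_0}\}}$, $\tilde{C}_{\mathcal{L}_s}$ itself, and its three proper quotients $N_1$, $N_2$, $N_3$. By Theorem~\ref{thm8-91}, every minimal proper $R$-semimodule appears in this list and every entry of the list is indeed minimal, giving exactly the six semimodules named in the statement. The main point requiring care is the verification in Lemma~\ref{lem9-01} that $\tilde{C}_{\mathcal{L}_s}$ has precisely three non-trivial quotients and no others; this is the only genuinely computational step, and it mirrors the congruence analysis already carried out for $M_4$ in Subsection~\ref{s7.5}. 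Everything else is a direct application of the structural theorem, so the only obstacle is ensuring that the enumeration of quotients of $\tilde{C}_{\mathcal{L}_s}$ is exhaustive, which the explicit action formulae for $\underline{w}\cdot x$ and $\underline{w}\cdot y$ make transparent.
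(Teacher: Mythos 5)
Your argument is correct and follows the paper's own route exactly: the paper likewise obtains this corollary immediately from Theorem~\ref{thm8-91} combined with the enumeration of quotients in Lemma~\ref{lem9-01}, and your treatment of the two singleton cells and of $\mathcal{L}_t$ merely makes explicit what the paper leaves implicit. One small caveat: your parenthetical appeal to Corollary~\ref{cor8-51} is not available for $n\geq 4$, since then the $\mathcal{H}$-cells of $\mathcal{J}$ are not singletons (e.g.\ $\mathcal{L}_s\cap\mathcal{R}_s$ contains both $\underline{s}$ and $\underline{sts}$), so $\mathcal{J}$ is not strongly regular; however, your primary $s\leftrightarrow t$ symmetry argument, which can be checked directly against the explicit action formulae, does handle $\tilde{C}_{\mathcal{L}_t}$ correctly.
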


\subsection{Classification of elementary proper $R$-semimodules}\label{s9.3}

Inspired by the fact, established in Corollary~\ref{cor9-02}, that our classification of 
minimal proper semimodules can be extended from the case $D_{2\cdot 3}$ to all dihedral cases,
it is natural to ask whether the same can be done about classification of elementary proper semimodules.
This is the aim of this subsection.

Let $K$ denote the boolean $2^{\{x,y\}}$ which we consider as an abelian monoid with respect to the
boolean addition. We define on $K$ the structure of an $R$-semimodule as follows:
\begin{itemize}
\item $\underline{w_0}$ acts on $K$ as zero;
\item each element $\underline{w}$ in $\mathcal{L}_s$ annihilates $y$ and maps both $x$ and $x+y$ to $x+y$;
\item each element $\underline{w}$ in $\mathcal{L}_t$ annihilates $x$ and maps both $y$ and $x+y$ to $x+y$.
\end{itemize}
It is straightforward to check that this defines on $K$ the structure of an $R$-semimodule.
We denote by
\begin{itemize}
\item $K_1$ the subsemimodule of $K$ consisting of $0$, $x$ and $x+y$;
\item $K_2$ the subsemimodule of $K$ consisting of $0$, $y$ and $x+y$;
\item $K_3$ the subsemimodule of $K$ consisting of $0$ and $x+y$.
\end{itemize}
The $R$-semimodule $K$ can be depicted using the right picture in \eqref{eqnewfor9}
where $x$ corresponds to $(1,0)$ and $y$ to $(0,1)$, the action of elements in $\mathcal{L}_{s}$
is given by the dashed arrows and the action of elements in $\mathcal{L}_{t}$
is given by the dotted arrows. 

\begin{theorem}\label{thm9-03}
The $R$-semimodules ${C}_{\{\underline{e}\}}$, ${C}_{\{\underline{w_0}\}}$,
$K$, $K_1$, $K_2$ and $K_3$ are the only elementary proper $R$-semimodules.
\end{theorem}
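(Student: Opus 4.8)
The plan is to mirror the proof of Theorem~\ref{thm7.3}, replacing every element-by-element computation in the Kazhdan--Lusztig basis (feasible only when $\mathcal{J}$ is small, as for $n=3$) by an argument that uses the cell structure alone. Let $M$ be an elementary proper $R$-semimodule. By Lemma~\ref{lem4.46}, $0$ is the only invertible element of $M$, so a sum of elements of $M$ vanishes only when every summand vanishes; I will use this remark repeatedly. The one technical device that makes the general case go through is the following claim: if $\underline{w}$ and $\underline{w}'$ lie in the same left cell, then $\underline{w}(m)=0$ if and only if $\underline{w}'(m)=0$, for every $m\in M$. Indeed, $\underline{w}'$ occurs with a positive coefficient in some $r\underline{w}$; applying $r$ to the equation $\underline{w}(m)=0$ gives $(r\underline{w})(m)=0$, a sum of elements of $M$ in which $\underline{w}'(m)$ occurs, so $\underline{w}'(m)=0$ by the remark, and the converse is symmetric. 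This is the semimodule analogue of Lemma~\ref{lem8-21}. Since $\underline{s}\in\mathcal{L}_s$, $\underline{t}\in\mathcal{L}_t$ and $\mathcal{L}_s\cup\mathcal{L}_t=\mathcal{J}$, the vanishing of $\underline{s}(m)$ (resp.\ of $\underline{t}(m)$) controls that of $\underline{w}(m)$ for all $\underline{w}\in\mathcal{L}_s$ (resp.\ $\mathcal{L}_t$), and the whole of $\mathcal{J}$ is governed by the two generators.

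Following Theorem~\ref{thm7.3}, I would set $I:=\{m\in M:\underline{w_0}(m)\text{ is non-invertible}\}$. Because $\underline{w_0}\,\underline{w}=k\,\underline{w_0}$ with $k>0$ for every $w$ (as $\underline{w_0}$ is the sum of all group elements and is right-invariant), $I$ is an $R$-invariant ideal with $0\notin I$, so $\sim_I$ is an $R$-congruence different from the full relation; elementariness forces $|I|\le 1$. If $|I|=1$, say $I=\{h\}$, then $h$ is an absorbing idempotent fixed by all of $R$ while $\underline{w_0}$ sends $M\setminus\{h\}$ to $0$; the relation collapsing $M\setminus\{h\}$ is then an $R$-congruence, so $M=\{0,h\}$ with every $\underline{w}$ acting as the identity, i.e.\ $M\cong C_{\{\underline{w_0}\}}$. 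If $|I|=0$, then $\underline{w_0}$ acts as $0$ on all of $M$, which is the case I pursue next.

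In that case I partition $M\setminus\{0\}$ into the four sets $B_{\varepsilon\delta}$ recording whether $\underline{s}(m)$ and $\underline{t}(m)$ are zero, exactly as in Lemma~\ref{lem7.11}. Additivity of the action together with the ``only $0$ is invertible'' remark yields the addition table \eqref{eq720}; the left-cell claim reduces the verification of $R$-invariance to the two facts used for $n=3$, namely that $\underline{s}(m)\neq0$ forces $\underline{t}(\underline{s}(m))=\underline{ts}(m)\neq0$ (since $\underline{ts}\in\mathcal{L}_s$, so it annihilates $m$ iff $\underline{s}$ does) and its symmetric counterpart. Hence $\sim$ with classes $\{0\},B_{00},B_{10},B_{01},B_{11}$ is an $R$-congruence, and as $M$ is proper it is not the full relation, so each $B_{\varepsilon\delta}$ is a singleton whose element is idempotent by \eqref{eq720}. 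Thus $M$ embeds as a sub-semilattice of the five-element meet-semilattice \eqref{eq722}, and the elementariness-driven case analysis of Theorem~\ref{thm7.3} (collapsing $\{0\}\cup B_{00}$ kills $B_{00}$, after which the presence of $B_{10},B_{01},B_{11}$ separates the remaining configurations) produces exactly $C_{\{\underline{e}\}}$ (only $B_{00}$), $K_3$ (only $B_{11}$), $K$, $K_1$ and $K_2$. For the converse, the six listed semimodules are proper, the three two-element ones are automatically simple, and $K,K_1,K_2$ are elementary by the congruence computation just outlined, which is the direct analogue of Lemma~\ref{lem7.05}.

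The main obstacle is precisely the point that the $n=3$ argument settles by brute force in the $6\times6$ table \eqref{eq-71}: for general $n$ the two-sided cell $\mathcal{J}$ is arbitrarily large, so one cannot track $\underline{w}(m)$ for each $w$ separately. The left-cell constancy claim is what removes this obstruction, collapsing all of $\mathcal{L}_s$ (resp.\ $\mathcal{L}_t$) to the single generator $\underline{s}$ (resp.\ $\underline{t}$) and reducing the needed multiplicative input to the single observation $\underline{ts}\in\mathcal{L}_s$; once this is in place the remaining combinatorics is identical to the symmetric-group case.
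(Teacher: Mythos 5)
Your proposal is correct and takes essentially the same route as the paper: your left-cell constancy claim is exactly the paper's Lemma~\ref{lem9-04}, proved the same way (annihilation propagates through $R\underline{w}$ because the non-zero elements of $M$ form an additive ideal, $0$ being the only invertible element), and the rest mirrors the proof of Theorem~\ref{thm7.3} via the $B_{\varepsilon\delta}$-partition, just as the paper does. The only caveat is a slight overcompression: to place $\underline{w}(m)$ in $B_{11}$ for an \emph{arbitrary} $\underline{w}\in\mathcal{L}_s$ (not just $\underline{w}=\underline{s}$) one still needs the multiplication formulas \eqref{eqnkls} and \eqref{eqnklt} to see that each of $\underline{s}\,\underline{w}$ and $\underline{t}\,\underline{w}$ contains an $\mathcal{L}_s$-constituent with non-zero coefficient, which is a bit more than ``the single observation $\underline{ts}\in\mathcal{L}_s$'' and is precisely the short case analysis the paper carries out.
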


\begin{proof}
It is straightforward to check that all $R$-semimodules in the formulation are elementary.
To complete the proof one needs to check that there are no other elementary proper $R$-semimodules.
This is done similarly to the proof of Theorem~\ref{thm7.3}. The only non-trivial part 
is to prove an analogue of Lemma~\ref{lem7.11}.

So, we assume that $M$ is a proper elementary $R$-semimodule such that $\underline{w_0}M=0$.
By \cite[Proposition~1.2]{Il}, all elements of $M$ are idempotent. Let
$B_{00}$, $B_{10}$, $B_{01}$ and $B_{11}$ be defined as in Lemma~\ref{lem7.11}. We claim that 
the corresponding equivalence relation $\sim$ correspondning to the one 
in Lemma~\ref{lem7.11} is an  $R$-congruence on $M$.
That $\sim$ is a congruence is checked similarly to Lemma~\ref{lem7.11}, so 
we just need to prove that $\sim$ is $R$-invariant.

We will need the following lemma.

\begin{lemma}\label{lem9-04}
Let $r\in\{s,t\}$. Then, for $m\in M$, the conditions
\begin{enumerate}[$($a$)$]
\item\label{lem9-04.1} $m$ is annihilated by some element of $\mathcal{L}_r$,
\item\label{lem9-04.2} $m$ is annihilated by all elements of $\mathcal{L}_r$,
\end{enumerate}
are equivalent.
\end{lemma}

\begin{proof}
We only need to prove the implication \eqref{lem9-04.1}$\Rightarrow$\eqref{lem9-04.2}.
Let $m\in M$ be annihilated by some $\underline{w}\in \mathcal{L}_r$. Then $m$
is annihilated by all elements in $R\underline{w}$. As non-zero elements of $M$ form an ideal
with respect to addition,
it follows that $m$ is annihilated by any $\underline{u}$ which appears in some element
in $R\underline{w}$ with a non-zero coefficient. From the definition of $\mathcal{L}_r$
it thus follows that each $\underline{u}\in \mathcal{L}_r$ must annihilate $m$.
\end{proof}

From Lemma~\ref{lem9-04} it follows that each $\underline{w}\in \mathcal{L}_s$ sends
$B_{00}\cup B_{01}$ to zero and $B_{10}\cup B_{11}$ to something non-zero.
Let $A$ be the image of $B_{10}\cup B_{11}$ under our $\underline{w}$.
Assume $\underline{s}a=0$, for some $a\in A$, say $a=\underline{w}b$. 
If $\underline{w}\in \mathcal{R}_s$, then from \eqref{eqnkls} it follows that 
\begin{displaymath}
0= \underline{s}a= \underline{s}\cdot \underline{w}b= 2\underline{w}b=2a,
\end{displaymath}
that is $a=0$, a contradiction to the fact that all elements in $A$ are non-zero. 
If $\underline{w}\in \mathcal{R}_t$, then from \eqref{eqnkls} it similarly follows that 
$\underline{tw}b=0$. As $\underline{tw}\in \mathcal{L}_s$, this again contradicts
Lemma~\ref{lem9-04}. Therefore $0\not\in \underline{s}A$.

Assume $\underline{t}a=0$, for some $a\in A$, say $a=\underline{w}b$. 
If $\underline{w}\in \mathcal{R}_t$, then from \eqref{eqnklt} it follows that 
$a=0$, a contradiction to the fact that all elements in $A$ are non-zero. 
If $\underline{w}\in \mathcal{R}_s$, then from \eqref{eqnklt} it follows that 
$\underline{tw}b=0$ and even $\underline{sw}b=0$, if $w\neq s$. 
If $w=s$, then  $\underline{tw}\in \mathcal{L}_s$ and we get a contradiction to
Lemma~\ref{lem9-04}. If $w\neq s$, then  $\underline{sw}\in \mathcal{L}_s$ 
and we get a contradiction to Lemma~\ref{lem9-04}. 
Therefore $0\not\in \underline{t}A$. Consequently, $A\subset B_{11}$.
This implies that $\sim$ is stable under the action of any $\underline{w}\in \mathcal{L}_s$.
By symmetry, $\sim$ is also stable under the action of any $\underline{w}\in \mathcal{L}_t$.
This proves that $\sim$ is $R$-stable.

The rest of the proof of Theorem~\ref{thm9-03} is similar 
to the proof of Theorem~\ref{thm7.3} and is left to the reader.
\end{proof}

\vspace{5mm}

\noindent
Department of Mathematics, Uppsala University, Box. 480,
SE-75106, Uppsala, SWEDEN, emails: 

{\tt Chih-Whi.Chen\symbol{64}math.uu.se}

{\tt Brendan.Frisk.Dubsky\symbol{64}math.uu.se}

{\tt Helena.Jonsson\symbol{64}math.uu.se}

{\tt Volodymyr.Mazorchuk\symbol{64}math.uu.se}

{\tt Elin.Persson.Westin\symbol{64}math.uu.se}

{\tt Xiaoting.Zhang\symbol{64}math.uu.se}

{\tt Jakob.Zimmermann\symbol{64}math.uu.se}

\end{document}